\definecolor{darkred}{RGB}{150,0,0}
\definecolor{darkgreen}{RGB}{0,150,0}
\definecolor{darkblue}{RGB}{0,0,200}
\newtheorem{theorem}{Theorem}
\newtheorem{lemma}[theorem]{Lemma}
\newtheorem{fact}{Fact}
\newtheorem{definition}[theorem]{Definition}
\newtheorem{assumption}[theorem]{Assumption}
\newtheorem*{definition*}{Definition}
\newtheorem*{assumption*}{Assumption}
\newtheorem*{problem*}{Problem}
\newtheorem*{theorem*}{Theorem}
\newtheorem*{lemma*}{Lemma}
\newtheorem*{corollary*}{Corollary}
\newtheorem*{proposition*}{Proposition}
\newtheorem*{remark*}{Remark}
\newcommand{\bv}[1]{\mathbf{#1}}		
\newcommand{\bvgrk}[1]{{\boldsymbol{#1}}}	
\newcommand{\varphihat}{\hat{\varphi}}
\newcommand{\varphis}{\varphi^\star}
\newcommand{\vu}{\bv{u}}
\newcommand{\vw}{\bv{w}}
\newcommand{\vx}{\bv{x}}
\newcommand{\vy}{\bv{y}}
\newcommand{\ub}{\bv{u}}
\newcommand{\wb}{\bv{w}}
\newcommand{\xb}{\bv{x}}
\newcommand{\xbb}{\bar{\bv{x}}}
\newcommand{\Jhat}{\hat{J}}
\newcommand{\Gcal}{\mathcal{G}}
\newcommand{\Hcal}{\mathcal{H}}
\newcommand{\Kcal}{\mathcal{K}}
\newcommand{\Mcal}{\mathcal{M}}
\newcommand{\Scal}{\mathcal{S}}
\newcommand{\Tcal}{\mathcal{T}}
\newcommand{\tn}[1]{\|{#1}\|_{\ell_2}}
\newcommand{\vA}{\bv{A}}
\newcommand{\vB}{\bv{B}}
\newcommand{\vC}{\bv{C}}
\newcommand{\vI}{\bv{I}}
\newcommand{\vK}{\bv{K}}
\newcommand{\vL}{\bv{L}}
\newcommand{\vM}{\bv{M}}
\newcommand{\vN}{\bv{N}}
\newcommand{\vP}{\bv{P}}
\newcommand{\vQ}{\bv{Q}}
\newcommand{\vR}{\bv{R}}
\newcommand{\vS}{\bv{S}}
\newcommand{\vT}{\bv{T}}
\newcommand{\vU}{\bv{U}}
\newcommand{\vV}{\bv{V}}
\newcommand{\vX}{\bv{X}}
\newcommand{\vY}{\bv{Y}}
\newcommand{\Bb}{\bv{B}}
\newcommand{\Cb}{\bv{C}}
\newcommand{\Ib}{\bv{I}}
\newcommand{\Kb}{\bv{K}}
\newcommand{\Lb}{\bv{L}}
\newcommand{\Mb}{\bv{M}}
\newcommand{\Nb}{\bv{N}}
\newcommand{\Pb}{\bv{P}}
\newcommand{\Qb}{\bv{Q}}
\newcommand{\Rb}{\bv{R}}
\newcommand{\Sb}{\bv{S}}
\newcommand{\Vb}{\bv{V}}
\newcommand{\Xb}{\bv{X}}
\newcommand{\vAhat}{\hat{\bv{A}}}
\newcommand{\vBhat}{\hat{\bv{B}}}
\newcommand{\vChat}{\hat{\bv{C}}}
\newcommand{\vKhat}{\hat{\bv{K}}}
\newcommand{\vLhat}{\hat{\bv{L}}}
\newcommand{\vMhat}{\hat{\bv{M}}}
\newcommand{\vPhat}{\hat{\bv{P}}}
\newcommand{\vQhat}{\hat{\bv{Q}}}
\newcommand{\vShat}{\hat{\bv{S}}}
\newcommand{\vThat}{\hat{\bv{T}}}
\newcommand{\vLtil}{\tilde{\bv{L}}}
\newcommand{\nec}[1]{\textcolor{red}{NO: #1}}
\newcommand{\order}[1]{{\cal{O}}(#1)}
\newcommand{\vGamma}{\bvgrk{\Gamma}}
\newcommand{\vDelta}{\bvgrk{\Delta}}
\newcommand{\vLambda}{\bvgrk{\Lambda}}
\newcommand{\vpi}{\bvgrk{\uppi}}
\newcommand{\vSigma}{\bvgrk{\Sigma}}
\newcommand{\vPhi}{\bvgrk{\Phi}}
\newcommand{\nn}{\nonumber}
\newcommand{\bm}[1]{\mathbf{#1}}
\newcommand{\mtx}[1]{\bm{#1}}
\newcommand{\bSi}{{\boldsymbol{{\Sigma}}}}
\newcommand{\E}{\operatorname{\mathbb{E}}}
\newcommand{\Tc}{\mathcal{T}}
\newcommand{\tf}[1]{\|{#1}\|_{F}}
\newcommand{\Nc}{\mathcal{N}}
\newcommand{\Iden}{{\mtx{I}}}
\newcommand{\Jbar}{\bar{J}}
\renewcommand{\P}{\operatorname{\mathbb{P}}}
\newcommand{\leqsym}[1]{\stackrel{\text{(#1)}}{\leq}}
\newcommand{\Abs}{\bv{A}^\star}
\newcommand{\Bbs}{\bv{B}^\star}
\newcommand{\Kbs}{\bv{K}^\star}
\newcommand{\Lbs}{\bv{L}^\star}
\newcommand{\Pbs}{\bv{P}^\star}
\newcommand{\Rbs}{\bv{R}^\star}
\newcommand{\Tbs}{\bv{T}^\star}
\newcommand{\Abst}{\bv{A}^{\star \T}}
\newcommand{\Bbst}{\bv{B}^{\star \T}}
\newcommand{\norm}[1]{\|{#1} \|}
\newcommand{\normbig}[1]{\left\|{#1} \right\|}
\newcommand{\curlybracketsbig}[1]{\left\{ #1 \right\}}
\newcommand{\curlybrackets}[1]{\{ #1 \}}
\newcommand{\squarebracketsbig}[1]{\left[ #1 \right]}
\newcommand{\squarebrackets}[1]{[ #1 ]}
\newcommand{\parenthesesbig}[1]{\left( #1 \right)}
\newcommand{\parentheses}[1]{( #1 )}
\newcommand{\indicator}{\mathbf{1}}
\newcommand{\R}{\mathbb{R}}				
\newcommand{\dm}[2]
{
	\IfStrEq{#2}{1}{\R^{#1}}{\R^{#1 \x #2}}
}
\newcommand{\tr}{\textup{\textbf{tr}}} 			
\newcommand{\diag}{\textup{\textbf{diag}}} 		
\newcommand{\vek}{\textup{\textbf{vec}}}			
\newcommand{\expctn}{\mathbb{E}} 	 	
\newcommand{\inv}{{-1}} 				
\newcommand{\distas}{\overset{\text{i.i.d.}}{\sim}}
\newcommand*{\T}{{\mathpalette\@transpose{}}}
\newcommand*{\@transpose}[2]{\raisebox{\depth}{$\m@th#1\intercal$}}
\newcommand*{\x}{\mathsf{x}\mskip1mu} 	
\newcommand{\splitatcommas}[1]{%
	\begingroup
	\begingroup\lccode`~=`, \lowercase{\endgroup
		\edef~{\mathchar\the\mathcode`, \penalty0 \noexpand\hspace{0pt plus .1em}}%
	}\mathcode`,="8000 #1%
	\endgroup
}
\newcommand{\Item}[1]{%
	\ifx\relax#1\relax  \item \else \item[#1] \fi
	\abovedisplayskip=0pt\abovedisplayshortskip=0pt~\vspace*{-\baselineskip}
} 
\newcommand{\numSys}{s}
\newcommand{\dimSt}{n}
\newcommand{\NOMsys}{approximate }       
\newcommand{\NOMparam}{approximate }     
\newcommand{\NOMcDARE}{perturbed }       
\newcommand{\NOMcntrlr}{CE }             
\newcommand{\NOMsoltn}{perturbed }       
\title{\LARGE \bf
Certainty Equivalent Quadratic Control for Markov Jump Systems
}
\author{
Zhe Du$^*$$^{1}$ \quad\quad\quad\quad Yahya Sattar$^*$$^{2}$ \quad\quad Davoud Ataee Tarzanagh$^{1}$\\
Laura Balzano$^{1}$ 
\quad\quad Samet Oymak$^{2}$ 
\quad\quad Necmiye Ozay$^{1}$
\thanks{* Equal contribution.}
\thanks{$^{1}$ Department of Electrical Engineering and Computer Science, University of Michigan. Email: \{zhedu,tarzanaq,girasole,necmiye\}@umich.edu.}%
\thanks{$^{2}$ Department of Electrical and Computer Engineering, University of California, Riverside. Email: \{ysatt001,soymak\}@ucr.edu}%
}
\begin{document}

\maketitle
\thispagestyle{empty}
\pagestyle{empty}


\begin{abstract}
Real-world control applications often involve complex dynamics subject to abrupt changes or variations. Markov jump linear systems (MJS) provide a rich framework for modeling such dynamics. Despite an extensive history, theoretical understanding of parameter sensitivities of MJS control is somewhat lacking. Motivated by this, we investigate robustness aspects of certainty equivalent model-based optimal control for MJS with quadratic cost function. Given the uncertainty in the system matrices and in the Markov transition matrix is bounded by $\epsilon$ and $\eta$ respectively, robustness results are established for (i) the solution to coupled Riccati equations and (ii) the optimal cost, by providing explicit perturbation bounds which decay as $\mathcal{O}(\epsilon + \eta)$ and $\mathcal{O}((\epsilon + \eta)^2)$ respectively.
\end{abstract}


\section{Introduction}\label{sec:intro}

The Linear Quadratic Regulator (LQR) is both theoretically well understood and commonly used in practice when the system dynamics are known. It also provides an interesting benchmark, when system dynamics are unknown, for reinforcement learning with continuous state and action spaces and for adaptive control \cite{campi1998adaptive, abbasi2011regret, dean2019sample, mania2019certainty, simchowitz2020naive, abeille2020efficient,lale2020explore}.

A natural generalization of linear dynamical systems is Markov jump linear systems (MJS) that allow the dynamics of the underlying system to switch between multiple linear systems according to an underlying finite Markov chain. Similarly, a natural generalization of LQR problem to MJS is to use mode-dependent cost matrices, which allows to have different control goals under different modes. While the optimal control for MJS-LQR is well understood when one has perfect knowledge of the system dynamics \cite{chizeck1986discrete,costa2006discrete}, in practice it may not be optimal due to the imperfect knowledge of the system dynamics and the transition matrix. {For instance, one might use system identification techniques to learn an approximate model for the system.} Designing optimal controllers for MJS-LQR with this \NOMsys system dynamics and transition matrix in place of the true ones leads to so-called certainty equivalent (CE) control which is used extensively in practice. However, a theoretical understanding of the suboptimality of the CE control for MJS-LQR is lacking. The main challenge here is the hybrid nature of the problem that requires consideration of both the system dynamics uncertainty $\epsilon$, and the underlying Markov transition matrix uncertainty $\eta$.

The solution of infinite horizon MJS-LQR involves coupled algebraic Riccati equations. Our goal is to understand how sensitive the solution of these equations and the corresponding optimal cost are to the perturbations in system model. To this aim, we first develop explicit $\order{\epsilon+\eta}$ perturbation bound for the solution to coupled algebraic Riccati equations that arise in the context of MJS-LQR. This in turn is used to establish explicit $\order{(\epsilon+\eta)^2}$ suboptimality bound. Finally, numerical experiments are provided to support our theoretical claims. Our proof strategy requires nontrivial advances over those of \cite{mania2019certainty,konstantinov1993perturbation}. Specifically, the coupled nature of Riccati equations requires novel perturbation arguments as these coupled equations lack some of the nice properties of the standard Riccati equations, like uniqueness of solutions under certain conditions or being amenable to matrix factorization based approaches.

\section{Related Work}
The performance analysis of CE control for the classical LQR problem for linear time invariant (LTI) systems relies on the perturbation/sensitivity analysis of the underlying algebraic Riccati equations (ARE), i.e. how much the ARE solution changes when the parameters in the equation are perturbed. This problem is studied in many works \cite{konstantinov2003perturbation}. Early results on ARE solution perturbation bound are presented in \cite{kenney1990sensitivity} (continuous-time) and \cite{konstantinov1993perturbation} (discrete-time). Most literature, however, only discusses perturbed solutions within the vicinity of the ground-truth solution. The uniqueness of such a perturbed solution is not discussed until \cite{sun1998perturbation}, which is further refined in \cite{sun2002condition} to provide explicit perturbation bounds and generalization to complex equations. Tighter bounds are obtained \cite{zhou2009perturbation} when the parameters have a special structure like sparsity. Channelled by ARE perturbation results, the end-to-end CE LQR control suboptimality bound in terms of dynamics perturbation is established in \cite{mania2019certainty}. The field of CE MJS-LQR control and corresponding coupled ARE (cARE) perturbation analysis, however, is very barren. To the best of our knowledge, there is no work on performance guarantee for CE MJS-LQR control, and the only two works \cite{konstantinov2002perturbation, konstantinov2003perturbation2} for cARE perturbation analysis only consider continuous-time cARE that arises in robust control applications, which is not applicable in MJS-LQR setting. Our work is also related to robust control for MJS (see, e.g., \cite{shi1999control,costa2006discrete}), where the focus is to numerically compute a controller to achieve a guaranteed cost under a given uncertainty bound. Whereas, we aim to characterize how the degradation in performance depends on perturbations in different parameters when CE control is used. Therefore, our work contributes to the body of work in robust control and CE control of MJS from a different perspective, and also paves the way to use these ideas in the context of learning-based control with performance guarantees.

\section{Preliminaries and Problem Setup}\label{sec:setup}

We use boldface uppercase (lowercase) letters to denote matrices (vectors). For a matrix $\vV$, $\rho(\vV)$, $\underline{\sigma}(\vV)$, and $\norm{\vV}$ denote its spectral radius, smallest singular value, and spectral norm, respectively. We let $\norm{\vV}_+:= \norm{\vV}+1$. The Kronecker product of two matrices $\vM$ and $\vN$ is denoted as $\vM \otimes \vN$. $\vV_{1:\numSys}$ denotes a set of $s$ matrices $\curlybrackets{\vV_i}_{i=1}^\numSys$ of same dimensions. We use $\diag(\vV_{1:\numSys})$ to denote a block diagonal matrix whose $i$-th diagonal block is given by $\vV_i$. We define $[\numSys]:= \curlybrackets{1,2, \dots, \numSys}$, $\underline{\sigma}(\vV_{1:\numSys}):= \min_{i \in [\numSys]} \underline{\sigma}(\vV_i)$, $\norm{\vV_{1:\numSys}} := \max_{i \in [\numSys]} \norm{\vV_i}$, and $\norm{\vV_{1:\numSys}}_+ := \max_{i \in [\numSys]} \norm{\vV_i}_+$. We use $\alpha \vU_{1:\numSys} + \beta \vV_{1:\numSys}$ to denote $\curlybrackets{\alpha \vU_i + \beta \vV_i}_{i=1}^\numSys$.
We use $\lesssim$ and $\gtrsim$ for inequalities that hold up to a constant factor. 

\subsection{Markov Jump Systems}
We consider the problem of optimally controlling MJS, which are governed by the state equation,
\begin{align}
	\vx_{t+1} = \Abs_{\omega(t)}\vx_t + \Bbs_{\omega(t)}\ub_t + \vw_t, \label{switched LDS}
\end{align}
where $\vx_t \in \R^n$, $\vu_t \in \R^p$ and $\vw_t\in \R^n$ denote the state, input~(or action) and noise at time $t$ respectively. Throughout, we assume $\vx_0 \sim \Nc(0,\Iden_n)$ and $\{\vw_t\}_{t=0}^{\infty} \distas \Nc(0,\sigma_w^2\Iden_n)$. There are $\numSys$ modes in total, and the dynamics of the $i$-th mode is given by $(\vA^\star_i, \vB^\star_i)$. The active mode at time $t$ is indexed by $\omega(t) \in [s]$. In MJS the mode sequence $\{\omega(t)\}_{t = 0}^{\infty}$ follows an ergodic Markov chain with transition matrix $\Tbs \in \R^{\numSys \times \numSys}$ such that for all $t \geq 0$, the $ij$-th element of $\Tbs$ denotes the conditional probability
\begin{equation}\label{eq_MarkovMtx}
	[\Tbs]_{ij} := \P\big(\omega(t+1) = j \mid \omega(t) = i \big), \forall i,j \in [\numSys].
\end{equation}
Due to ergodicity, for any initial distribution $\vpi_0 {\ \in \ } \dm{\numSys}{1}$ of $\omega(0)$, there exists a unique stationary distribution $\vpi_{\vT^\star} \in \dm{\numSys}{1}$ such that $(\vT^{\star \T})^t \vpi_0 \to \vpi_{\vT^\star}$ as $t \to \infty$.
Throughout, we assume the initial state $\vx_0$, Markov chain $\curlybrackets{\omega(t)}_{t=0}^\infty$, and noise $\curlybrackets{\vw_t}_{t=0}^\infty$ are mutually independent. 

For mode-dependent controller $\vK_{1:\numSys}$ that yields inputs $\vu_t {=} \vK_{\omega(t)} \vx_t$, we use $\vL_i{:=} \vA^\star_i + \vB^\star_i \vK_i$ to denote the closed-loop state matrix for mode $i$. We use $\vx_{t+1} {=} \vL_{\omega(t)} \vx_t$ to denote the noise-free autonomous MJS, either open-loop ($\vL_i {=} \Abs_i$) or closed-loop ($\vL_i {=} \vA^\star_i + \vB^\star_i \vK_i$). Due to the randomness in $\{\omega(t)\}_{t=0}^{\infty}$, it is common to consider the stability of MJS in the mean-square sense which is defined as follows.
\begin{definition}\cite[Definitions 3.8, 3.40]{costa2006discrete}\label{def_mss}
	(a) We say MJS in \eqref{switched LDS} with $\vu_t = 0$ is mean square stable (MSS) if there exists $\vx_\infty, \vSigma_\infty$ such that for any initial state/mode $\vx_0, ~\omega(0)$, as $t \rightarrow \infty$, we have
	\begin{align}
		\norm{\expctn[\vx_t] - \vx_\infty} \to 0, \quad	\norm{\expctn[\vx_t \vx_t^\T] - \vSigma_\infty} \to 0.
	\end{align}
	In the noise-free case ($\vw_t=0$), we have $\vx_\infty=0$, $\vSigma_\infty=0$. (b) We say MJS in \eqref{switched LDS} with $\vw_t{=}0$ is (mean square) stabilizable if there exists mode-dependent controller $\Kb_{1:s}$ such that the closed-loop MJS $\vx_{t+1} = (\vA^\star_{\omega(t)} + \vB^\star_{\omega(t)} \vK_{\omega(t)}) \vx_t$ is MSS. We call such $\Kb_{1:s}$ a stabilizing controller.
\end{definition}
One can check the stabilizability of an MJS via linear matrix inequalities \cite[Proposition 3.42]{costa2006discrete}. It is well-known that the stability of non-switching systems is related to the spectral radius of the state matrix. Similarly, the mean-square stability of an autonomous MJS $\vx_{t+1} = \vL_{\omega(t)} \vx_t$ is related to the spectral radius of the augmented state matrix: $\vLtil \in \dm{\numSys \dimSt^2}{\numSys \dimSt^2}$ with $ij$-th $\dimSt^2 {\times} \dimSt^2$ block given by
\begin{equation}\label{def_augmentedL}
	\squarebrackets{\vLtil}_{ij} := [\vT^\star]_{ij} \vL^\T_i \otimes \vL^\T_i, \quad \forall~ i,j \in [s].
\end{equation}
Before stating a lemma to relate the MSS with the spectral radius of $\vLtil$, we define the operator,
\begin{equation}\label{eq_phistar}
	\varphis_i(\vV_{1:\numSys}) := \sum \nolimits_{j=1}^\numSys [\Tbs]_{ij} \vV_j, \quad \forall ~ i \in [\numSys].
\end{equation}
\begin{lemma} \cite[Theorem 3.9]{costa2006discrete}\label{prop MSS}
	The following are equivalent: (a) MJS $\xb_{t+1} = \Lb_{\omega(t)}\xb_t$ is MSS; (b) $\rho(\tilde{\Lb}) <1$; (c) \label{item_mssLyapunov} there exists $\vV_{1:\numSys}$ with $\vV_i \succ 0$, such that $\vV_i - \vL^\T_i \varphi^\star_i(\vV_{1:\numSys}) \Lb_i \succ 0, \quad \forall~ i \in [\numSys]$.
\end{lemma}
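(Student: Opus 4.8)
The plan is to reduce the stochastic statement to a deterministic linear recursion on second-moment matrices and then to establish the equivalences around the cycle (a)$\Rightarrow$(b)$\Rightarrow$(c)$\Rightarrow$(a). The starting point is to introduce, for the autonomous MJS $\xb_{t+1}=\Lb_{\omega(t)}\xb_t$, the mode-conditioned second moments $\Qb_i(t):=\E[\xb_t\xb_t^\T\,\indicator\{\omega(t)=i\}]\succeq 0$. Conditioning on $\omega(t)$ and using that $\omega(t+1)$ is conditionally independent of $\xb_t$ given $\omega(t)$ (the Markov property, together with the assumed independence between $\xb_0$ and the mode sequence), I would derive
\begin{equation*}
	\Qb_j(t+1)=\sum\nolimits_{i=1}^{\numSys}[\Tbs]_{ij}\,\Lb_i\Qb_i(t)\Lb_i^\T,\qquad j\in[\numSys].
\end{equation*}
Stacking the vectorizations into $\qb(t):=(\vek\Qb_1(t);\dots;\vek\Qb_\numSys(t))$ gives $\qb(t+1)=\Mb\,\qb(t)$ with $(j,i)$-block $[\Mb]_{ji}=[\Tbs]_{ij}(\Lb_i\otimes\Lb_i)$, so comparing with \eqref{def_augmentedL} yields $\vLtil=\Mb^\T$ and hence $\rho(\vLtil)=\rho(\Mb)$. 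Since $\E[\xb_t\xb_t^\T]=\sum_i\Qb_i(t)$ with each $\Qb_i(t)\succeq0$, and $\norm{\E[\xb_t]}^2\le\tr\E[\xb_t\xb_t^\T]$ by Jensen's inequality, mean-square stability (a) is equivalent to the statement ``$\Qb_i(t)\to0$ for every initial $\xb_0,\omega(0)$ and all $i$''.

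For (a)$\Rightarrow$(b): taking $\omega(0)=i$ deterministically and $\xb_0$ with an arbitrary covariance lets $\qb(0)$ range over a set whose linear span is the $\Mb$-invariant subspace $\bigoplus_{i=1}^\numSys\mathrm{Sym}_\dimSt$, so (a) forces $\Mb^t\to0$ on it. Because the operator $\Qb_{1:\numSys}\mapsto\curlybrackets{\sum_i[\Tbs]_{ij}\Lb_i\Qb_i\Lb_i^\T}_{j=1}^\numSys$ preserves the cone of tuples of positive semidefinite matrices, the Perron--Frobenius/Krein--Rutman theorem for cone-preserving maps ensures that $\rho(\Mb)$ is attained at an eigenvector lying in that cone, hence in $\bigoplus_i\mathrm{Sym}_\dimSt$; therefore $\rho(\vLtil)=\rho(\Mb)<1$. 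For (b)$\Rightarrow$(c): once vectorized, the operator $\Acal(\vV_{1:\numSys})_i:=\Lb_i^\T\varphis_i(\vV_{1:\numSys})\Lb_i=\sum_j[\Tbs]_{ij}\Lb_i^\T\vV_j\Lb_i$ is represented precisely by $\vLtil$, so $\rho(\Acal)=\rho(\vLtil)<1$ and the Neumann series $\vV_{1:\numSys}:=\sum_{t\ge0}\Acal^t\!\parenthesesbig{\curlybrackets{\Iden_\dimSt}_{i=1}^\numSys}$ converges; every summand is positive semidefinite, so $\vV_i\succeq\Iden_\dimSt\succ0$, and by construction $\vV_i-\Lb_i^\T\varphis_i(\vV_{1:\numSys})\Lb_i=\Iden_\dimSt\succ0$, which is exactly (c).

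For (c)$\Rightarrow$(a): positive definiteness of the $\vV_i$ supplies $\gamma\in(0,1)$ with $\Lb_i^\T\varphis_i(\vV_{1:\numSys})\Lb_i\preceq\gamma\vV_i$ for all $i$. Define the Lyapunov function $\mathcal V(t):=\sum_i\tr(\vV_i\Qb_i(t))\ge0$; then the recursion above and cyclicity of the trace give
\begin{equation*}
	\mathcal V(t+1)=\sum\nolimits_i\tr\!\parenthesesbig{\Lb_i^\T\varphis_i(\vV_{1:\numSys})\Lb_i\,\Qb_i(t)}\le\gamma\sum\nolimits_i\tr(\vV_i\Qb_i(t))=\gamma\,\mathcal V(t),
\end{equation*}
using $\Qb_i(t)\succeq0$. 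Hence $\mathcal V(t)\le\gamma^t\mathcal V(0)\to0$, and because $\vV_i\succeq c\Iden_\dimSt$ for some $c>0$ we obtain $c\,\tr\E[\xb_t\xb_t^\T]\le\mathcal V(t)\to0$; thus $\E[\xb_t\xb_t^\T]\to0$ (and $\E[\xb_t]\to0$) for every initial condition, i.e. the MJS is MSS.

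I expect (a)$\Rightarrow$(b) to be the main obstacle. It requires arguing that mean-square decay \emph{from every} initial state and mode is genuinely equivalent to a spectral-radius bound on the $\numSys\dimSt^2\times\numSys\dimSt^2$ deterministic matrix $\vLtil$: one must pin down the reachable set of initial second-moment tuples (to see that it spans the symmetric subspace) and then invoke the cone/positivity structure of the second-moment operator, so that Perron--Frobenius precludes a dominant eigenvalue of $\vLtil$ hiding on a complementary subspace. By contrast, (b)$\Rightarrow$(c) is a short Neumann-series computation and (c)$\Rightarrow$(a) is a one-line Lyapunov decay estimate, so those should be routine.
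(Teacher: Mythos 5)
First, a point of comparison: the paper does not prove this lemma at all --- it is imported verbatim as Theorem 3.9 of Costa--Fragoso--Marques --- so your proposal can only be judged against the standard argument, not against anything in the text. Your overall architecture is the right one: the reduction to the deterministic second-moment recursion $Q_j(t+1)=\sum_i [\Tbs]_{ij}\Lb_i Q_i(t)\Lb_i^\T$, the identification $\vLtil=\Mb^\T$ so that $\rho(\vLtil)=\rho(\Mb)$, the Neumann-series construction for (b)$\Rightarrow$(c), and the trace--Lyapunov decay for (c)$\Rightarrow$(a) are all correct and essentially the textbook route.

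The gap is where you predicted it, in (a)$\Rightarrow$(b), and the tool you invoke does not close it. Krein--Rutman/Perron--Frobenius for cone-preserving maps only locates $\rho$ of the restriction of $\Mb$ to the \emph{linear span} of the invariant cone; it says nothing about spectrum on a complementary invariant subspace unless the cone is solid (reproducing) in the ambient space. Here the cone of PSD tuples spans only the symmetric tuples, a proper $\Mb$-invariant subspace of $\R^{\numSys\dimSt^2}$, while the antisymmetric tuples form a complementary invariant subspace, so cone preservation plus decay on the cone cannot by itself preclude $\rho(\Mb)$ being attained off the symmetric part (for the reasoning pattern itself, note $A=\diag(1/2,\,2)$ preserves the ray $\curlybrackets{(x,0):x\geq 0}$ and decays on it, yet $\rho(A)=2$). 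As written you therefore only obtain that the symmetric restriction has spectral radius below one, not $\rho(\vLtil)<1$. The standard repair is elementary: complexify. Since the $\Lb_i$ are real, the recursion maps Hermitian tuples to Hermitian tuples, and over $\mathbb{C}$ every matrix is a complex combination of at most four PSD matrices ($V=H_1+\mathrm{i}H_2$ with $H_1,H_2$ Hermitian, each a difference of its positive and negative parts). MSS gives decay for all PSD initial tuples (rank-one initializations plus linearity), hence by spanning $\Mb^t\to 0$ on all of $\mathbb{C}^{\numSys\dimSt^2}$, i.e. $\rho(\vLtil)<1$. Equivalently one can use monotonicity of the positive map $\Phi$ defined by the recursion: $0\preceq P_i\preceq \Iden_\dimSt$ implies $0\preceq \Phi^t(P)_i\preceq \Phi^t(\Iden)_i\to 0$, which bounds $\Phi^t$ in norm on arbitrary tuples. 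With either of these substituted for the Perron--Frobenius appeal, your proof is complete; the other two implications are fine as stated.
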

These assertions reduce to the classical stability results regarding spectral radius and Lyapunov equation 
when $\numSys=1$. Moreover, it can be shown that the augmented matrix $\vLtil^\T$ maps $\curlybrackets{\expctn[\vx_t \vx_t^\T \mathbf{1}_{\omega(t)=i}]}_{i=1}^\numSys$ to $\curlybrackets{\expctn[\vx_{t+1} \vx_{t+1}^\T \mathbf{1}_{\omega(t+1)=i}]}_{i=1}^\numSys$ \cite[p.35]{costa2006discrete}, hence its spectral radius determines MSS.

\subsection{Linear Quadratic Regulator}
The optimal control problem we consider in this paper is the following Markov jump system infinite-horizon linear quadratic regulator (MJS-LQR) problem:
\begin{equation}\label{LQR optimization}
	\begin{aligned}
		\inf_{\{\ub_0,\ub_1,\dots\}} &J(\vu_0, \vu_1, \dots) \\
		\text{s.t.} \quad \vx_{t+1} = &\Abs_{\omega(t)}\vx_t + \Bbs_{\omega(t)}\ub_t + \vw_t.
	\end{aligned}
\end{equation}
Here, we consider the long-term average quadratic cost
\begin{equation}\label{eq_lqrCostFunction}\hspace{-4pt}
	J(\vu_0{,} \vu_1{,} \dots \hspace{-1pt})
	{:=} \hspace{-1pt} \limsup_{T \to \infty} \hspace{-1pt} \expctn \hspace{-1pt} \bigg[\hspace{-2pt} \frac{1}{T} \hspace{-4pt} \sum_{t=0}^{T} \hspace{-2pt} \vx_t^\T\Qb_{\omega(t)}\vx_t {+} \ub_t^\T\Rb_{\omega(t)}\ub_t \hspace{-2pt} \bigg]
\end{equation}
where $\Qb_{\omega(t)}$ and $\Rb_{\omega(t)}$ are mode-dependent cost matrices chosen by users, and 
the expectation is over the randomness of initial state $\vx_0$, noise $\{\vw_t\}_{t = 0}^{\infty}$ and Markovian modes $\{\omega(t)\}_{t = 0}^{\infty}$. 
Unlike classical LQR for LTI systems, where cost matrices are usually fixed throughout the time horizon, the mode-dependent cost matrices in MJS-LQR allows us to have different control goals under different modes. To guarantee MJS-LQR~\eqref{LQR optimization} is solvable, we assume the MJS in \eqref{switched LDS} and the cost matrices satisfy the following.
\begin{assumption} \label{asmp_1}
	(a) For all $i \in [s]$, the cost matrices $\Qb_{i}$ and $\Rb_{i}$ are positive definite; and (b) the MJS in~\eqref{switched LDS} with $\vw_t=0$ is stabilizable and each pair $(\vQ_i^{\frac{1}{2}}, \vA_i)$ is observable.
\end{assumption}

The following lemma characterizes some properties of the minimizer of \eqref{eq_lqrCostFunction}.
\begin{lemma} \cite[Theorem 4.6 and Corollary A.21]{costa2006discrete}\label{lemma_vanillaLQRResult}
	Under Assumption \ref{asmp_1} (a) and (b), for all $i \in [\numSys]$, the coupled discrete-time algebraic Riccati equations (cDARE)
	\begin{equation}\label{eq_CARE}
		\begin{aligned}
			\Xb_i &= \Abst_i \varphi^\star_i(\Xb_{1:\numSys}) \Abs_i + \Qb_i - \Abst_i \varphi^\star_i(\Xb_{1:\numSys}) \Bbs_i \\
			&\quad\big(\Rb_i + \Bbst_i \varphi^\star_i(\Xb_{1:\numSys}) \Bbs_i\big)^{-1} \Bbst_i \varphi^\star_i(\Xb_{1:\numSys}) \Abs_i
		\end{aligned}
	\end{equation}
	have a unique solution $\vP^\star_{1:\numSys}$ among $\curlybrackets{\vX_{1:\numSys}: \vX_i \succeq 0,~ \forall~ i}$, and $\vP^\star_i \succ 0$ for all $i \in [\numSys]$. Moreover, the mode-dependent state feedback controller
	\begin{equation}\label{eq_optfeedback}
		\Kbs_i = -\big(\Rb_i + \Bbst_i \varphi^\star_i(\Pbs_{1:\numSys}) \Bbs_i\big)^{-1} \Bbst_i \varphi^\star_i(\Pbs_{1:\numSys}) \Abs_i
	\end{equation}
	stabilizes the MJS in \eqref{switched LDS} and minimizes the cost \eqref{eq_lqrCostFunction} with input $\vu_t = \vu_t^\star := \Kbs_{\omega(t)} \vx_t$ and optimal cost $J^\star := J(\vu^\star_0, \vu^\star_1, \dots) = \sigma_w^2 \sum_{i=1}^\numSys  \tr ( \vpi_{\Tbs}(i)\vP^\star_{i} )$.
\end{lemma}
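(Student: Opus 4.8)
\emph{Proof approach.} The plan is to run the standard dynamic-programming argument for \eqref{LQR optimization}, with the mode coupling absorbed into the operator $\varphis$, and then read off the average cost from ergodicity of the Markov chain. First I would analyze the finite-horizon problem with zero terminal cost: by induction on the number $k$ of remaining stages, the optimal cost-to-go from state $\vx$ in mode $i$ is $\vx^\T\vP^{(k)}_i\vx+c^{(k)}$, where $\vP^{(0)}_i=0$ and $\vP^{(k+1)}_{1:\numSys}=\Rcal(\vP^{(k)}_{1:\numSys})$, with $\Rcal$ the map sending $\Xb_{1:\numSys}$ to the right-hand side of \eqref{eq_CARE}. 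Indeed, conditioned on mode $i$ and input $\vu$, the next mode is $j$ with probability $[\Tbs]_{ij}$ and $\vx_{t+1}=\Abs_i\vx+\Bbs_i\vu+\vw_t$, so the continuation term is $\sum_j[\Tbs]_{ij}\expctn_{\vw_t}[\vx_{t+1}^\T\vP^{(k)}_j\vx_{t+1}]$; carrying out the expectations turns the Bellman step into minimizing $\vx^\T\vQ_i\vx+\vu^\T\vR_i\vu+(\Abs_i\vx+\Bbs_i\vu)^\T\varphis_i(\vP^{(k)}_{1:\numSys})(\Abs_i\vx+\Bbs_i\vu)$ over $\vu$, whose minimizer is the feedback \eqref{eq_optfeedback} with $\vP^{(k)}$ in place of $\Pbs$ and whose minimum is $\vx^\T[\Rcal(\vP^{(k)}_{1:\numSys})]_i\vx$, the noise term $\sigma_w^2\tr(\varphis_i(\vP^{(k)}_{1:\numSys}))$ being absorbed into $c^{(k+1)}$.

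Next I would establish convergence of $\vP^{(k)}_{1:\numSys}$. Because all stage costs are nonnegative under Assumption~\ref{asmp_1}(a), the optimal cost-to-go is nondecreasing in the horizon, so $\vP^{(k)}_{1:\numSys}$ is nondecreasing in the positive-semidefinite order. Assumption~\ref{asmp_1}(b) supplies a uniform upper bound: a stabilizing controller $\Kb_{1:\numSys}$ makes the closed-loop autonomous MJS mean-square stable (Definition~\ref{def_mss}, Lemma~\ref{prop MSS}), so the infinite-horizon cost it incurs is finite and horizon-independent, and it dominates the \emph{optimal} cost-to-go $\vP^{(k)}_{1:\numSys}$. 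A bounded monotone sequence of symmetric matrices converges; passing to the limit in $\vP^{(k+1)}=\Rcal(\vP^{(k)})$ shows the limit $\vP^\star_{1:\numSys}$ solves \eqref{eq_CARE}, and $\vP^\star_i\succeq\vQ_i\succ0$ (from \eqref{eq_CARE}) gives positive definiteness.

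The delicate step — and the one I expect to be the main obstacle — is uniqueness of the positive-semidefinite solution, since coupled Riccati equations do not inherit the matrix-factorization uniqueness proofs available for a single DARE. Here I would use observability of each $(\vQ_i^{\frac12},\vA_i)$: for any solution $\Xb_{1:\numSys}\succeq0$ of \eqref{eq_CARE}, let $\Kb^{\Xb}_i$ be the feedback given by \eqref{eq_optfeedback} with $\Xb$ in place of $\Pbs$, and first argue that the closed loop $\Abs_i+\Bbs_i\Kb^{\Xb}_i$ is mean-square stable; this is exactly where observability enters, as the MJS counterpart of the usual detectability argument — a non-MSS mode would be visible through $\vQ_i^{\frac12}$ and force unbounded cost, contradicting that $\Xb_{1:\numSys}$ acts as a finite cost surrogate through the telescoping identity used below. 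With both $\vP^\star_{1:\numSys}$ and $\Xb_{1:\numSys}$ inducing stabilizing controllers, comparing the closed-loop cost of $\Kb^{\Xb}$ with optimality of $\Kbs$ and vice versa sandwiches $\Xb_{1:\numSys}$ against $\vP^\star_{1:\numSys}$, forcing $\Xb_{1:\numSys}=\vP^\star_{1:\numSys}$. Uniqueness in turn makes the minimizing feedback well defined and equal to \eqref{eq_optfeedback}; a verification argument (the cost-to-go is a genuine lower bound for every admissible policy and is attained by $\vu_t^\star=\Kbs_{\omega(t)}\vx_t$) then shows $\Kbs_{1:\numSys}$ is optimal and stabilizing for \eqref{LQR optimization}.

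Finally, for $J^\star$ I would use the telescoping identity that \eqref{eq_CARE} encodes at the optimum: writing $\vL^\star_i:=\Abs_i+\Bbs_i\Kbs_i$, equation \eqref{eq_CARE} is equivalent to $\vP^\star_i=\vL^{\star\T}_i\varphis_i(\vP^\star_{1:\numSys})\vL^\star_i+\vQ_i+\Kbs_i^\T\vR_i\Kbs_i$, so along the optimal closed loop $\vx_{t+1}=\vL^\star_{\omega(t)}\vx_t+\vw_t$, with $\mathcal F_t$ the natural filtration, $\vx_t^\T\vP^\star_{\omega(t)}\vx_t-\expctn[\vx_{t+1}^\T\vP^\star_{\omega(t+1)}\vx_{t+1}\mid\mathcal F_t]=\vx_t^\T\vQ_{\omega(t)}\vx_t+(\vu_t^\star)^\T\vR_{\omega(t)}\vu_t^\star-\sigma_w^2\tr(\varphis_{\omega(t)}(\vP^\star_{1:\numSys}))$. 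Taking expectations, summing over $t=0,\dots,T$, dividing by $T$ and letting $T\to\infty$, the boundary terms $\tfrac1T\expctn[\vx_0^\T\vP^\star_{\omega(0)}\vx_0-\vx_{T+1}^\T\vP^\star_{\omega(T+1)}\vx_{T+1}]$ vanish by mean-square stability of the optimal closed loop, so $J^\star=\sigma_w^2\lim_{T\to\infty}\tfrac1T\sum_{t=0}^{T}\expctn[\tr(\varphis_{\omega(t)}(\vP^\star_{1:\numSys}))]$. Since $\expctn[\tr(\varphis_{\omega(t)}(\vP^\star_{1:\numSys}))]=\sum_i\P(\omega(t)=i)\sum_j[\Tbs]_{ij}\tr(\vP^\star_j)$, ergodicity ($\P(\omega(t)=i)\to\vpi_{\Tbs}(i)$), stationarity ($\sum_i\vpi_{\Tbs}(i)[\Tbs]_{ij}=\vpi_{\Tbs}(j)$), and the Cesàro lemma give $J^\star=\sigma_w^2\sum_{i=1}^{\numSys}\tr(\vpi_{\Tbs}(i)\vP^\star_i)$, as claimed.
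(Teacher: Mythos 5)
The paper itself does not prove this lemma: it is imported wholesale from \cite[Theorem 4.6 and Corollary A.21]{costa2006discrete}, so there is no in-paper argument to compare against line by line. Your sketch is, in substance, the standard proof behind that citation --- value iteration with zero terminal cost, monotone convergence bounded above through a stabilizing controller supplied by Assumption~\ref{asmp_1}(b), uniqueness via the stabilizing property of the feedback induced by any solution, and the average cost via the telescoping identity together with stationarity of $\vpi_{\Tbs}$ --- and the outline is sound. Two spots are thinner than they need to be. First, for uniqueness you appeal to an MJS analogue of the detectability argument through observability of $(\vQ_i^{1/2},\vA_i)$; that notion (``W-detectability'') is genuinely more delicate for MJS than for LTI systems and you do not develop it, but under Assumption~\ref{asmp_1}(a) you do not need it: any solution $\vX_{1:\numSys}\succeq 0$ of \eqref{eq_CARE} satisfies $\vX_i\succeq\vQ_i\succ 0$ and, rewritten in closed-loop form with $\vL_i$ the induced closed-loop matrix, $\vX_i-\vL_i^\T\varphi^\star_i(\vX_{1:\numSys})\vL_i\succeq\vQ_i\succ 0$, so mean-square stability of the induced loop is immediate from Lemma~\ref{prop MSS}(c) and uniqueness is then exactly Lemma~\ref{lemma_lqrSolPD} (Costa's Lemma A.14), which is also how the paper handles the perturbed equation. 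Second, your final step only verifies that $\vu^\star_t=\Kbs_{\omega(t)}\vx_t$ attains $\sigma_w^2\sum_i\tr(\vpi_{\Tbs}(i)\vP^\star_i)$; optimality over \emph{all} admissible inputs requires disposing of the terminal term $-\E[\vx_{T+1}^\T\vP^\star_{\omega(T+1)}\vx_{T+1}]/T$ in the telescoped lower bound for policies that are not mean-square stabilizing, and this is where $\vQ_i\succ 0$ must be used explicitly (any policy for which that term does not vanish has unbounded average state energy and hence infinite average cost). With these two points patched, the proposal is a faithful reconstruction of the cited proof.
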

In Assumption \ref{asmp_1}, $\vR_i \succ 0$ means every component of control variable will be penalized. With $(\vQ_i^{\frac{1}{2}}, \vA_i)$ being observable, $\vQ_i \succ 0$ is not mandatory for Lemma \ref{lemma_vanillaLQRResult} to hold. This condition is needed mainly for our  subsequent theoretical developments.


In the remaining paper, we let cDARE$(\vA^\star_{1:\numSys}, \vB^\star_{1:\numSys}, \vT^\star)$ denote equations with structure given by \eqref{eq_CARE}, where $\vT^\star$ determines the operator $\varphi^\star$. In practice, cDARE can be solved efficiently either with LMI or recursively ~\cite{costa2006discrete}.

\subsection{Certainty Equivalent Controller}
In this work we seek to control MJS with unknown dynamics $(\Abs_{1:s},\Bbs_{1:s},\Tbs)$ based on \NOMparam parameters $(\vAhat_{1:s},\vBhat_{1:s},\vThat)$. The cost matrices $(\Qb_{1:s},\Rb_{1:s})$  are assumed known and the modes $\{\omega(t)\}_{t = 0}^{\infty}$ are observed at run-time. We analyze the CE approach, that is, using \NOMparam parameters $(\vAhat_{1:s},\vBhat_{1:s},\vThat)$, we solve the \NOMcDARE cDARE$(\vAhat_{1:s}, \vBhat_{1:s},\vThat)$,
\begin{equation}
	\begin{aligned}\label{eq_CARE_perturbed}
		\Xb_i &= \vAhat_i^\T \varphihat_i(\Xb_{1:\numSys}) \vAhat_i + \Qb_i - \vAhat_i^\T \varphihat_i(\Xb_{1:\numSys}) \vBhat_i \\
		&\quad\big(\Rb_i + \vBhat_i^\T \varphihat_i(\Xb_{1:\numSys}) \vBhat_i\big)^{-1} \vBhat_i^\T \varphihat_i(\Xb_{1:\numSys}) \vAhat_i, 
	\end{aligned}
\end{equation}
for all $i \in [s]$ and $\Xb_i \succeq 0$, where the operator $\varphihat$ is defined as
\begin{equation}
	\varphihat_i(\Vb_{1:\numSys}) := \sum \nolimits_{j = 1}^{\numSys} [\vThat]_{ij} \Vb_j.
\end{equation}
Let $\vPhat_{1:\numSys}$ be the positive definite solution of \eqref{eq_CARE_perturbed}, then the \NOMcntrlr controller is given by
\begin{equation}\label{eq_15}
	\vKhat_i = -\big(\Rb_i + \vBhat_i^\T \hat{\varphi}_i(\vPhat_{1:\numSys}) \vBhat_i\big)^{-1} \vBhat_i^\T \hat{\varphi}_i(\vPhat_{1:\numSys}) \vAhat_i,
\end{equation}
for all $i \in [s]$. Lastly, we apply the input $\hat{\ub}_t = \vKhat_{\omega(t)}\xb_t$ to control the true MJS. Let $\Jhat:= J(\hat{\vu}_0, \hat{\vu}_1, \dots)$ be the cost incurred by playing the CE controller on the true MJS. Throughout we assume that for all $i \in [s]$, the \NOMparam parameters have the following accuracy levels:
\begin{equation}
	\norm{\Abs_{i} -  \vAhat_{i}} \leq  \epsilon,~ \norm{\Bbs_{i} - \vBhat_{i}} \leq \epsilon,~ \norm{\Tbs - \vThat}_\infty  \leq  \eta. \label{eqn:uncertainty bound}
\end{equation}
In the next section, we address the following questions: (a) When can the perturbed cDARE in~\eqref{eq_CARE_perturbed} be guaranteed to have a unique positive semi-definite solution $\vPhat_{1:\numSys}$? (b) What is a tight upper bound on $\norm{\vPhat_{1:\numSys} - \Pbs_{1:\numSys}}$? (c) When does $\vKhat_{1:\numSys}$ stabilize the true MJS? (d) How large is the suboptimality gap $\Jhat - J^\star$? 

\section{Perturbation Analysis for MJS-LQR}\label{sec:main}
Before we formally state our results, we introduce a few more concepts and assumptions. We use $\vL^\star_i: = \vA^\star_i + \vB^\star_i \vK^\star_i$ to denote the closed-loop state matrix under the optimal MJS-LQR controller \eqref{eq_optfeedback}, and define the augmented state matrix $\vLtil^\star$ similar to \eqref{def_augmentedL} such that its $ij$-th block is given by
\begin{equation}\label{def_augmentedLstar}
	\squarebrackets{\vLtil^\star}_{ij} := [\vT^\star]_{ij} {\vL^\star_i}^\T \otimes {\vL^\star_i}^\T.
\end{equation}
From Lemma \ref{lemma_vanillaLQRResult}, we know the closed-loop MJS $\vx_{t+1}=\vL^\star_{\omega(t)} \vx_t$ is MSS, thus $\rho(\vLtil^\star)<1$ by Lemma \ref{prop MSS}. Furthermore, we define the following to quantify the decay of $\vLtil^\star$.
\begin{definition} \label{def_tau}
	For an arbitrary $\gamma \in [\rho(\vLtil^\star), 1)$, we define
	\begin{equation}
		\tau(\vLtil^\star, \gamma) := \sup_{k\in \mathbb{N}} \curlybrackets{\norm{(\vLtil^\star)^k}/\gamma^k}.
	\end{equation}
\end{definition}
Note that $\tau(\vLtil^\star, \gamma)$ is finite by Gelfand's formula. It is easy to see that $\tau(\vLtil^\star, \gamma)$ monotonically decreases with $\gamma$ and that $\tau(\vLtil^\star, \gamma) \geq 1$. This quantity measures the transient response of a non-switching system with state matrix $\vLtil^\star$ and can be upper bounded by its $\mathcal{H}_\infty$ norm \cite{tu2017non}.

Finally, for the ease of exposition, we define a few constants: 
\begin{equation}
	\begin{split}
		\xi &:=\min \curlybrackets{\norm{\Bbs_{1:\numSys}}_+^{-2} \norm{\vR_{1:\numSys}^\inv}_+^{-1}
			\norm{\Lbs_{1:\numSys}}_+^{-2}, \underline{\sigma}(\Pbs_{1:s}) }, \\
		C_{\epsilon} &:= \norm{\Abs_{1:\numSys}}_+^2 \norm{\Bbs_{1:\numSys}}_+ \norm{\Pbs_{1:\numSys}}_+^2 \norm{\vR_{1:\numSys}^\inv}_+, \\
		C_{\epsilon}^u &:= C_{\epsilon}^\inv \norm{\Bbs_{1:\numSys}}_+^{-2} \norm{\Pbs_{1:\numSys}}_+^{-1} \norm{\vR_{1:\numSys}^\inv}_+^{-1}, \\
		C_{\eta} &:= \norm{\Abs_{1:\numSys}}_+^2 \norm{\Bbs_{1:\numSys}}_+^4 \norm{\Pbs_{1:\numSys}}_+^3 \norm{\vR_{1:\numSys}^\inv}_+^2, \\
		C_{\eta}^u &:= C_{\eta}^\inv,\\
		\Gamma_\star &:= \max \curlybrackets{\norm{\vA^\star_{1:\numSys}}_+, \norm{\vB^\star_{1:\numSys}}_+, \norm{\vP^\star_{1:\numSys}}_+, \norm{\vK^\star_{1:\numSys}}_+}.
	\end{split}
\end{equation}

In the following, we will show that despite being coupled, cDARE for MJS-LQR satisfies nice properties. To be more precise, we show that if the \NOMsys MJS is accurate enough, i.e., $\epsilon$ and $\eta$ are sufficiently small, we can guarantee that not only the positive definite solution $\vPhat_{1:\numSys}$ to the \NOMcDARE cDARE uniquely exists, but also $\vPhat_{1:\numSys}$ does not not deviate much from $\vP^\star_{1:\numSys}$.
\begin{theorem}\label{prop_riccatiPertb} Let $\epsilon,\eta \geq 0$ be as in~\eqref{eqn:uncertainty bound}. Under Assumptions~\ref{asmp_1}, and as long as 
	$
	\epsilon \leq 
	\min \curlybracketsbig{ \frac{C_{\epsilon}^u \xi \parentheses{1-\gamma}^2}{204 \dimSt \numSys \tau(\tilde{\Lb}^\star, \gamma)^2}, \ \norm{\Bbs_{1:s}}, \  \underline{\sigma}(\vQ_{1:s}) }
	$,
	$
	\eta \leq 
	\frac{C_{\eta}^u \xi \parentheses{1-\gamma}^2}{48 \dimSt \numSys \tau(\tilde{\Lb}^\star, \gamma)^2}
	$,
	the \NOMcDARE cDARE in \eqref{eq_CARE_perturbed} is guaranteed to have a unique solution $\vPhat_{1:s}$ in $\curlybrackets{\vX_{1:\numSys}: \vX_i \succeq 0, ~\forall i}$ such that $\vPhat_i \succ 0$ for all $i$ and
	\begin{equation}\label{eq_PPerturbResult}
		\norm{\vPhat_{1:\numSys} - \Pbs_{1:\numSys}} 
		\leq
		\frac{\sqrt{\dimSt \numSys} \tau(\tilde{\Lb}^\star, \gamma)}{1-\gamma}
		\parentheses{6 C_{\epsilon} \epsilon + 2 C_\eta \eta }.
	\end{equation}	
\end{theorem}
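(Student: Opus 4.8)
The plan is to recast the \NOMcDARE cDARE \eqref{eq_CARE_perturbed} as a fixed-point equation and to prove that, in a suitable ball around $\Pbs_{1:\numSys}$, the associated Riccati operator is a contraction whose unique fixed point is $\vPhat_{1:\numSys}$. Let $\Rcal$ (resp.\ $\hat{\Rcal}$) denote the map sending $\vX_{1:\numSys}$ to the right-hand side of \eqref{eq_CARE} (resp.\ \eqref{eq_CARE_perturbed}), so that $\Pbs_{1:\numSys}$ is a fixed point of $\Rcal$ and every PSD solution of the perturbed equation is a fixed point of $\hat{\Rcal}$. Using the closed-loop identity $\Rcal(\vX_{1:\numSys})_i = \vL_i(\vX)^\T\varphi^\star_i(\vX_{1:\numSys})\vL_i(\vX) + \vQ_i + \vK_i(\vX)^\T\vR_i\vK_i(\vX)$, with $\vK_i(\vX)$ the induced gain and $\vL_i(\vX) = \Abs_i + \Bbs_i\vK_i(\vX)$, one checks that the Fréchet derivative $D\Rcal$ of $\Rcal$ at $\Pbs_{1:\numSys}$ is the augmented closed-loop Lyapunov operator $\vDelta_{1:\numSys}\mapsto\curlybrackets{{\vL_i^\star}^\T\varphi^\star_i(\vDelta_{1:\numSys})\vL^\star_i}_{i}$, whose iterates are governed by $\vLtil^\star$ as in \eqref{def_augmentedLstar}. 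Since $\rho(\vLtil^\star)<1$ by Lemma~\ref{prop MSS}, the resolvent $(\Ical - D\Rcal)^{-1}$ has operator norm at most $\tau(\vLtil^\star,\gamma)/(1-\gamma)$ once one passes through the stacked-vectorization identification of $\curlybrackets{\vX_i}_{i}$ with a vector in $\R^{\numSys\dimSt^2}$ (this conversion between per-mode spectral norms and the $\ell_2$ norm on the stack is exactly what produces the $\sqrt{\dimSt\numSys}$ factor). This is the source of the prefactor $\frac{\sqrt{\dimSt\numSys}\,\tau(\vLtil^\star,\gamma)}{1-\gamma}$ in \eqref{eq_PPerturbResult}.

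Next I would carry out the quantitative estimates. First, Lipschitz bounds: over the ball $\mathcal{B}_r := \curlybrackets{\vX_{1:\numSys}\succeq 0 : \norm{\vX_{1:\numSys}-\Pbs_{1:\numSys}}\leq r}$, the induced gains $\vK_i(\vX)$, closed-loop matrices $\vL_i(\vX)$, and the inverses $(\vR_i+\vBhat_i^\T\varphihat_i(\vX)\vBhat_i)^{\inv}$ are Lipschitz in $\vX$ and in the data $(\vAhat,\vBhat,\vThat)$, with constants controlled by $\Gamma_\star$, $\underline{\sigma}(\vR_{1:\numSys})$ and $\norm{\vR_{1:\numSys}^\inv}_+$; the conditions $\epsilon\leq\norm{\Bbs_{1:\numSys}}$ and $\epsilon\leq\underline{\sigma}(\vQ_{1:\numSys})$ enter here, keeping $\norm{\vBhat_{1:\numSys}}_+\lesssim\norm{\Bbs_{1:\numSys}}_+$ and the relevant matrices strictly positive definite throughout $\mathcal{B}_r$. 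Second, a self-map and contraction step: set $r := \frac{\sqrt{\dimSt\numSys}\,\tau(\vLtil^\star,\gamma)}{1-\gamma}\parentheses{6C_\epsilon\epsilon+2C_\eta\eta}$ and use the smallness of $\epsilon,\eta$ (the $204$ and $48$ constants, together with the powers $(1-\gamma)^2$ and $\tau(\vLtil^\star,\gamma)^2$) to conclude that (i) the perturbed closed-loop matrices at every $\vX\in\mathcal{B}_r$ remain mean-square stable with augmented spectral radius below $\gamma$, so the same resolvent bound holds uniformly on $\mathcal{B}_r$, and (ii) $\hat{\Rcal}$ maps $\mathcal{B}_r$ into itself and is a strict contraction there. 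The split $6C_\epsilon\epsilon+2C_\eta\eta$ is cleanest if one perturbs in two stages, first $(\Abs,\Bbs)\to(\vAhat,\vBhat)$ with $\Tbs$ fixed and then $\Tbs\to\vThat$, bounding the displacement of the fixed point on each leg by the resolvent norm times the size of the change in the Riccati right-hand side ($\order{C_\epsilon\epsilon}$ and $\order{C_\eta\eta}$ respectively, where $C_\epsilon$ and $C_\eta$ are precisely the Lipschitz constants of that right-hand side with respect to $(\vA,\vB)$ and to the transition matrix). The Banach fixed-point theorem then gives existence of $\vPhat_{1:\numSys}\in\mathcal{B}_r$, uniqueness within $\mathcal{B}_r$, and the bound \eqref{eq_PPerturbResult}; positive definiteness $\vPhat_i\succ 0$ is immediate since the closed-loop form of \eqref{eq_CARE_perturbed} yields $\vPhat_i\succeq\vQ_i\succ 0$.

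What remains is to upgrade uniqueness in $\mathcal{B}_r$ to uniqueness among \emph{all} PSD solutions, and I expect this to be the main obstacle, because the coupled structure rules out the Hamiltonian/matrix-factorization arguments that settle this for a single DARE. The plan is to show any PSD solution $\vX_{1:\numSys}$ of \eqref{eq_CARE_perturbed} already lies in $\mathcal{B}_r$: its induced gain defines a closed-loop MJS which is mean-square stable by Lemma~\ref{prop MSS}(c) (since $\vQ_i,\vR_i\succ 0$ force $\vX_i\succ 0$ and $\vX_i-\vL_i^\T\varphihat_i(\vX_{1:\numSys})\vL_i=\vQ_i+\vK_i^\T\vR_i\vK_i\succ 0$), so $\vX_{1:\numSys}$ is the value matrix of a stabilizing controller for the perturbed MJS-LQR and can be compared to $\Pbs_{1:\numSys}$ up to $\order{\epsilon+\eta}$, forcing $\vX_{1:\numSys}\in\mathcal{B}_r$ when $\epsilon,\eta$ are small; local uniqueness then finishes. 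An alternative is to verify that Assumption~\ref{asmp_1} is inherited by $(\vAhat_{1:\numSys},\vBhat_{1:\numSys},\vThat)$ for small $\epsilon,\eta$ — mean-square stabilizability and observability are open conditions preserved because $\vLtil^\star$-stability is strict and the cost matrices are unchanged — and then invoke Lemma~\ref{lemma_vanillaLQRResult} for the perturbed system directly. Throughout, the subtlety is that every stability, monotonicity and resolvent statement must be phrased for the $\numSys\dimSt^2\times\numSys\dimSt^2$ augmented operator and tracked through $\tau(\vLtil^\star,\gamma)$ and $\gamma$, and the $\epsilon$- and $\eta$-dependencies must be separated with care, since the transition-matrix perturbation enters through $\varphihat$ versus $\varphi^\star$ and couples the modes differently from the $(\vAhat,\vBhat)$ perturbation.
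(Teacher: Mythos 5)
Your overall strategy matches the paper's: a quantitative fixed-point argument in a ball around $\Pbs_{1:\numSys}$, with the prefactor $\sqrt{\dimSt\numSys}\,\tau(\vLtil^\star,\gamma)/(1-\gamma)$ produced by the resolvent of the closed-loop Lyapunov operator through the stacked vectorization, and a final upgrade from local to global uniqueness among PSD solutions. However, there is a genuine gap in your contraction step: you iterate the Riccati right-hand-side map $\hat{\Rcal}$ itself and assert it is a strict contraction on $\mathcal{B}_r$. That is false in general. The local Lipschitz constant of $\hat{\Rcal}$ near $\Pbs_{1:\numSys}$ is governed by the \emph{norm} of the closed-loop Lyapunov operator $\vDelta_{1:\numSys}\mapsto\curlybrackets{{\vL^\star_i}^\T\varphi^\star_i(\vDelta_{1:\numSys})\vL^\star_i}_i$, i.e.\ roughly $\norm{\vL^\star_{1:\numSys}}^2$ (equivalently $\norm{\vLtil^\star}$), not by its spectral radius; mean-square stability only gives $\rho(\vLtil^\star)<1$, and the norm can exceed one already for a single mode with $\rho(\vL^\star)<1$ but $\norm{\vL^\star}>1$. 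The resolvent bound $\tau(\vLtil^\star,\gamma)/(1-\gamma)$ you invoke quantifies $(\Ical-D\Rcal)^{-1}$, but it does not make $\hat{\Rcal}$ contractive, and your two-stage "displacement of the fixed point $\leq$ resolvent norm $\times$ perturbation of the right-hand side" conclusion is exactly the statement that needs a preconditioned contraction (or a quantitative implicit function theorem) to justify. The paper resolves precisely this: writing $F(\vP^\star+\vX;\vA^\star,\vB^\star,\vT^\star)=\Tcal(\vX)+\Hcal(\vX)$ with $\Tcal(\vX)=\vX-{\vL^\star}^\T\vPhi^\star(\vX)\vL^\star$, it iterates $\Kcal(\vX)=\Tcal^{-1}\bigl(F(\vP^\star+\vX;\vA^\star,\vB^\star,\vT^\star)-F(\vP^\star+\vX;\vAhat,\vBhat,\vThat)-\Hcal(\vX)\bigr)$, whose fixed points give perturbed solutions and which is contractive on $\Scal_\nu$ because after applying $\Tcal^{-1}$ only the quadratic remainder $\Hcal$ and the $\order{C_\epsilon\epsilon+C_\eta\eta}$ data-perturbation terms survive (Lemma~\ref{lemma_3}). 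To repair your argument you must either pass to this Newton-type map or work in a weighted norm in which $\vLtil^\star$ has norm at most $\gamma$ and re-derive every constant (picking up additional $\tau$ factors you do not track). A side benefit of the paper's choice is that $\Tcal^{-1}$ involves only the true closed loop, so no uniform resolvent bound over the ball (your step (i)) is needed.

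On the uniqueness upgrade, your primary route (showing every PSD solution of \eqref{eq_CARE_perturbed} already lies in $\mathcal{B}_r$) is only sketched and is the hard way; the paper instead invokes Lemma~\ref{lemma_lqrSolPD}: with $\vQ_i,\vR_i\succ0$, a positive definite solution of a cDARE is automatically the unique solution among PSD tuples, so once $\vPhat_i\succ0$ is established (your observation $\vPhat_i\succeq\vQ_i\succ0$ works, as does the paper's $\nu<\underline{\sigma}(\vP^\star)$), global uniqueness is immediate without any value-function comparison or verification that the perturbed model inherits stabilizability.
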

In this theorem, the choice of parameter $\gamma$ balances the numerator and the denominator of $\frac{\tau(\tilde{\Lb}^\star, \gamma)}{1-\gamma}$. From the constants, we see we would have milder requirement on $\epsilon$ and $\eta$ and tighter bound on $\norm{\vPhat_{1:\numSys} - \Pbs_{1:\numSys}}$ when (i) $\norm{\vA^\star_{1:\numSys}}, \norm{\vB^\star_{1:\numSys}}$, (ii) $ \norm{\vL^\star_{1:\numSys}}$, $\tau(\tilde{\Lb}^\star, \gamma)$, and (iii) $\norm{\vR^\inv_{1:\numSys}}$ are smaller. These translate to the cases when (i) the true MJS is easier to stabilize; (ii) the closed-loop MJS under the optimal controller is more stable; and (iii) the input dominates more in the cost function. 
The role of $\tau(\tilde{\Lb}^\star, \gamma)$ in this theorem is closely related to the damping property in ARE perturbation analysis \cite{kenney1990sensitivity}. Coefficients for $\epsilon$ and $\eta$ on the RHS of \eqref{eq_PPerturbResult} are also known as condition numbers in algebraic Riccati equation sensitivity literature \cite{sun2002condition}.
The uniqueness result in Theorem \ref{prop_riccatiPertb} guarantees the perturbation bound \eqref{eq_PPerturbResult} indeed applies to the perturbed solution one would obtain in practice.
Using similar proof techniques, the exact same theorem can be proved for cases when approximate $\vQhat_{1:\numSys}$ with $\norm{\vQhat_{1:\numSys} {-} \vQ_{1:\numSys}} {\leq} \epsilon$ is used in place of $\vQ_{1:\numSys}$ in the computations, which can be useful when the cost for $\vx_t$ is in the form of $\norm{\vy_t}^2$ where $\vy_t {=} \vC_{\omega(t)} \vx_t$ represents observation, and we only have approximate parameter $\vChat_{1:\numSys}$. In this case, $\vQ_i {=} \vC^\T_i \vC_i$ and $\vQhat_i {=} \vChat^\T_i \vChat_i$.

Finally, using Theorem~\ref{prop_riccatiPertb}, we quantify the mismatch between the performance of the optimal controller $\Kbs_{1:s}$ and the certainty equivalent controller $\vKhat_i$. We then quantify the suboptimality gap $\Jhat - J^\star$ in terms of the controller mismatch and derive an upper bound on this mismatch so that the certainty equivalent controller $\vKhat_i$ stabilizes the MJS in the mean-square sense. This leads to the following suboptimality result.


\begin{theorem}\label{thrm:meta plus perturbation}
	Let $\epsilon,\eta \geq 0$ be as in~\eqref{eqn:uncertainty bound}. Suppose Assumptions~\ref{asmp_1} (a) and (b) hold and additionally the Markov chain $\{\omega(t)\}_{t=0}^\infty$ is ergodic. Then, as long as $\epsilon$ and $\eta$ are such that the upper bounds in Theorem \ref{prop_riccatiPertb} are valid and $C_\epsilon \epsilon + C_\eta \eta \lesssim \frac{(1-\gamma)^2\underline{\sigma}(\Rbs_i)^2}{s\sqrt{s n} \Gamma_\star^6(\underline{\sigma}(\Rbs_i)+\Gamma_\star^3)\tau(\tilde{\Lb}^\star,\gamma)^2}$, the CE controller $\vKhat_{1:s}$ stabilizes the true MJS and we have
	\begin{align}\label{eq_suboptimality}
		\nonumber 
		\Jhat - J^\star& \lesssim \sigma_w^2s^2 n \min\{n,p\}(\norm{\Rbs_{1:s}}+\Gamma_\star^3) \Gamma_\star^6 \\
		&\frac{\tau(\vLtil^\star,\gamma)^3(\underline{\sigma}(\Rbs_{1:s})+\Gamma_\star^3)^2}{(1-\gamma)^3\underline{\sigma}(\Rbs_{1:s})^4}\parentheses{6 C_{\epsilon} \epsilon + 2 C_\eta \eta }^2.
	\end{align}
\end{theorem}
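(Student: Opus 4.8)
The plan is a three-stage argument, adapting the LTI template of~\cite{mania2019certainty} to the coupled MJS structure: (i) turn the Riccati bound of Theorem~\ref{prop_riccatiPertb} into a controller-mismatch bound, (ii) use that mismatch together with the transient quantity of Definition~\ref{def_tau} to certify mean-square stability of the CE closed loop, and (iii) express the suboptimality gap through a coupled Lyapunov identity that is \emph{quadratic} in the controller mismatch. For stage~(i), since $\Kbs_i$ and $\vKhat_i$ are the explicit rational expressions \eqref{eq_optfeedback} and \eqref{eq_15} in $(\Abs_i,\Bbs_i,\varphis_i(\Pbs_{1:\numSys}),\vR_i)$ versus $(\vAhat_i,\vBhat_i,\varphihat_i(\vPhat_{1:\numSys}),\vR_i)$, I would bound their difference using the standard Lipschitz estimates for matrix products and for $\vM\mapsto\vM^{-1}$, valid on the region where $\vR_i+\vBhat_i^\T\varphihat_i(\vPhat_{1:\numSys})\vBhat_i$ stays uniformly positive definite; this region is reached under the smallness hypotheses because $\norm{\varphihat_i(\vPhat_{1:\numSys})-\varphis_i(\Pbs_{1:\numSys})}\le\eta\,\norm{\vPhat_{1:\numSys}}+\norm{\varphis_i(\vPhat_{1:\numSys}-\Pbs_{1:\numSys})}$ with the second term controlled by \eqref{eq_PPerturbResult}. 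Tracking the powers, this yields $\norm{\vKhat_{1:\numSys}-\Kbs_{1:\numSys}}\lesssim\frac{\sqrt{\dimSt\numSys}\,\tau(\vLtil^\star,\gamma)}{1-\gamma}\cdot\frac{\Gamma_\star^{c_1}}{\underline{\sigma}(\Rbs_{1:\numSys})^{2}}\,(6C_\epsilon\epsilon+2C_\eta\eta)$ for an explicit small power $c_1$, after absorbing the $\eta\,\norm{\vPhat_{1:\numSys}}$ contribution into $C_\eta\eta$.

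For stage~(ii), write $\vLhat_i:=\Abs_i+\Bbs_i\vKhat_i$ and form the augmented matrix $\vLtil^{\mathrm{CE}}$ as in \eqref{def_augmentedLstar} with $\vLhat_i$ in place of $\vL^\star_i$. The blockwise estimate $\norm{\vLtil^{\mathrm{CE}}-\vLtil^\star}\lesssim\sqrt{\numSys}\,\norm{\Bbs_{1:\numSys}}\,\norm{\Lbs_{1:\numSys}}_+\,\norm{\vKhat_{1:\numSys}-\Kbs_{1:\numSys}}$ holds because $\vLhat_i-\vL^\star_i=\Bbs_i(\vKhat_i-\Kbs_i)$ and Kronecker products are locally Lipschitz. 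Since $\rho(\vLtil^\star)<1$ with $\sup_k\norm{(\vLtil^\star)^k}/\gamma^k=\tau(\vLtil^\star,\gamma)$, a telescoping/Neumann-series estimate shows that once $\norm{\vLtil^{\mathrm{CE}}-\vLtil^\star}\le\frac{1-\gamma}{2\tau(\vLtil^\star,\gamma)}$ one has $\rho(\vLtil^{\mathrm{CE}})\le\frac{1+\gamma}{2}<1$ and $\tau(\vLtil^{\mathrm{CE}},\tfrac{1+\gamma}{2})\le2\tau(\vLtil^\star,\gamma)$, hence the CE closed loop is MSS by Lemma~\ref{prop MSS}. Composing this with the stage-(i) bound, the hypothesis $C_\epsilon\epsilon+C_\eta\eta\lesssim\frac{(1-\gamma)^2\underline{\sigma}(\Rbs_i)^2}{\numSys\sqrt{\numSys\dimSt}\,\Gamma_\star^6(\underline{\sigma}(\Rbs_i)+\Gamma_\star^3)\tau(\vLtil^\star,\gamma)^2}$ is exactly the threshold that makes the displacement fall below $\frac{1-\gamma}{2\tau(\vLtil^\star,\gamma)}$.

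For stage~(iii), recall that for any stabilizing $\vK_{1:\numSys}$ the cost on the true MJS equals $\sigma_w^2\sum_{i=1}^\numSys\vpi_{\Tbs}(i)\,\tr(\vS_i)$, where $\vS_{1:\numSys}\succ0$ solves the coupled Lyapunov system $\vS_i=\vQ_i+\vK_i^\T\vR_i\vK_i+(\Abs_i+\Bbs_i\vK_i)^\T\varphis_i(\vS_{1:\numSys})(\Abs_i+\Bbs_i\vK_i)$ — a Bellman / steady-state second-moment computation built on Lemma~\ref{prop MSS}(c) and the stationary distribution, paralleling the $J^\star$ formula of Lemma~\ref{lemma_vanillaLQRResult} — and that $\vS_{1:\numSys}=\Pbs_{1:\numSys}$ when $\vK=\Kbs$. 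Completing the square in the closed-loop form of \eqref{eq_CARE}, mode by mode, yields the exact identity $\vS_i-\Pbs_i=\vE_i+(\Abs_i+\Bbs_i\vK_i)^\T\varphis_i(\vS_{1:\numSys}-\Pbs_{1:\numSys})(\Abs_i+\Bbs_i\vK_i)$ with $\vE_i:=(\vK_i-\Kbs_i)^\T\big(\vR_i+\Bbst_i\varphis_i(\Pbs_{1:\numSys})\Bbs_i\big)(\vK_i-\Kbs_i)\succeq0$; that is, $\vS_{1:\numSys}-\Pbs_{1:\numSys}$ is the solution of a coupled Lyapunov recursion driven by the \emph{quadratic} term $\vE_{1:\numSys}$, whose inversion is governed by the closed-loop augmented matrix $\vLtil^{\mathrm{CE}}$ of stage~(ii). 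Taking $\vK=\vKhat$, I would propagate the \emph{trace} through this recursion, using that each $\vE_i$ is PSD of rank at most $\min\{\dimSt,\dimInput\}$ so $\tr(\vE_i)\le\min\{\dimSt,\dimInput\}\,\norm{\vE_i}\lesssim\min\{\dimSt,\dimInput\}\,(\norm{\Rbs_{1:\numSys}}+\Gamma_\star^3)\,\norm{\vKhat_{1:\numSys}-\Kbs_{1:\numSys}}^2$, together with $\tau(\vLtil^{\mathrm{CE}},\tfrac{1+\gamma}{2})\le2\tau(\vLtil^\star,\gamma)$, to get $\max_i\tr(\vS_i-\Pbs_i)\lesssim\frac{\numSys\,\tau(\vLtil^\star,\gamma)}{1-\gamma}\min\{\dimSt,\dimInput\}(\norm{\Rbs_{1:\numSys}}+\Gamma_\star^3)\norm{\vKhat_{1:\numSys}-\Kbs_{1:\numSys}}^2$. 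Since $\sum_i\vpi_{\Tbs}(i)=1$, $\Jhat-J^\star=\sigma_w^2\sum_i\vpi_{\Tbs}(i)\,\tr(\vS_i-\Pbs_i)\le\sigma_w^2\max_i\tr(\vS_i-\Pbs_i)$, and substituting the stage-(i) bound on $\norm{\vKhat_{1:\numSys}-\Kbs_{1:\numSys}}^2$ — which supplies the remaining $\tau(\vLtil^\star,\gamma)^2/(1-\gamma)^2$, the extra $\numSys\dimSt$, and the $\Gamma_\star^{O(1)}/\underline{\sigma}(\Rbs_{1:\numSys})^4$ — produces \eqref{eq_suboptimality}.

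I expect the main obstacle to be stage~(iii): because the coupled cDARE does not reduce to a single DARE, the completing-the-square identity and the subsequent Lyapunov propagation must be carried out simultaneously over all modes through the operator $\varphis$, and one must verify both that the driving term $\vE_{1:\numSys}$ stays PSD with rank at most $\min\{\dimSt,\dimInput\}$ and that the operator inverting the recursion is the \emph{perturbed} closed-loop augmentation $\vLtil^{\mathrm{CE}}$, not $\vLtil^\star$ — which is precisely why stage~(ii) has to be made quantitative (producing an explicit new transient constant and the decay rate $\tfrac{1+\gamma}{2}$) rather than left as a bare MSS assertion. A secondary, purely bookkeeping, difficulty is carrying every dependence on $\Gamma_\star$, $\underline{\sigma}(\Rbs_{1:\numSys})$, $\tau(\vLtil^\star,\gamma)$, $1-\gamma$, $\numSys$, $\dimSt$, $\dimInput$ explicitly through the three stages so that the final constants agree with those displayed in \eqref{eq_suboptimality}.
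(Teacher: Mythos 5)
Your three-stage skeleton is the paper's proof: the paper first proves a controller-mismatch lemma (bounding $\norm{\Kbs_i-\vKhat_i}$ via an intermediate controller and Lipschitz estimates for products and inverses, exactly your stage (i)), then certifies MSS of the CE closed loop by perturbing the augmented matrix and invoking the fact $\norm{(\vLtil^\star+\vDelta)^k}\le\tau(\tau\norm{\vDelta}+\gamma)^k$, concluding $\rho\le(1+\gamma)/2$ once $\norm{\vDelta}\le\frac{1-\gamma}{2\tau(\vLtil^\star,\gamma)}$ (your stage (ii), including the same threshold), and finally obtains the quadratic gap. The only genuine divergence is in stage (iii): the paper reduces the long-run-average cost to a noiseless accumulated-cost problem with the Markov chain started at its stationary distribution and then cites Lemma 3 of \cite{jansch2020policy}, which gives the exact identity $\Jbar(\vKhat)-\Jbar(\Kbs)=\sum_i\tr\big(\bSi_i^{\vKhat}(\Kbs_i-\vKhat_i)^\T(\Rb_i+\Bbst_i\varphis_i(\Pb^{\Kbs}_{1:\numSys})\Bbs_i)(\Kbs_i-\vKhat_i)\big)$, and closes by bounding $\norm{\bSi^{\vKhat}}\le\frac{2\tau(\vLtil^\star,\gamma)}{1-\gamma}$; you instead derive the same content in-house, via the average-cost representation $J(\vK)=\sigma_w^2\sum_i\vpi_{\Tbs}(i)\tr(\vS_i)$ and mode-wise completion of squares, so that $\vS_{1:\numSys}-\Pbs_{1:\numSys}$ solves a coupled Lyapunov recursion driven by the PSD quadratic term $\vE_{1:\numSys}$. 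These are adjoint forms of one identity, and your route buys a self-contained proof at the cost of redoing the covariance bookkeeping the citation provides. One caution on your "propagate the trace through the recursion" step: traces do not contract geometrically block-by-block (the per-step map only satisfies $\tr(\vL_i^\T\varphis_i(\vM)\vL_i)\le\norm{\vL_i}^2\sum_j[\vT^\star]_{ij}\tr(\vM_j)$, which need not decay), so to extract the $\frac{\tau}{1-\gamma}$ and the stated $s$-dependence you should either pass to the vectorized Neumann series for $(\Ib-\vLtil^{\mathrm{CE}})^{-1}$ or, more cleanly, dualize against the stationary closed-loop covariances $\bSi_i^{\vKhat}$ — which is precisely the form the paper imports; once you do that, your constants assemble to \eqref{eq_suboptimality} exactly as the paper's do.
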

Our result states that the suboptimality decays as the square of the size of uncertainty. 
Also note that the \NOMcntrlr controller $\vKhat_{1:\numSys}$ stabilizes both the \NOMsys MJS $(\vAhat_{1:s},\vBhat_{1:s},\vThat)$ and MJS $(\vA^\star_{1:s},\vB^\star_{1:s},\vT^\star)$, hence the upper bounds on $\epsilon$ and $\eta$ also provides a stability margin type result for the optimal LQR controllers \cite{shaked1986guaranteed} for MJS.

To see the significance of our bound, suppose the uncertainty in the system dynamics and the transition matrix are due to the estimation error induced by a system identification procedure that uses $T$ samples. Then, if the estimation error decays as $\order{1/\sqrt{T}}$, Theorem~\ref{thrm:meta plus perturbation} states that the suboptimality decays as $\order{1/T}$ which is similar to what is known in the case of LDS~\cite{mania2019certainty} except that we suffer from an additional $s$ and $n$ dependence in the numerator because of multiple modes in MJS. 
More concretely, having estimates twice as good will reduce the suboptimality to one fourth, which can be achieved by doubling the data if a system identification algorithm with sample complexity $\order{1/\sqrt{T}}$ is used. If we look at the dependence of suboptimality gap on the system properties, our result states that more modes $s$, larger system order $n$ and larger noise variance $\sigma_w$ adversely affect the gap.

\section{Numerical Experiments}\label{sec:exp}

In this section, we present some numerical results to support our proposed theory. All of the synthesis and performance experiments are run in MATLAB. 
\begin{figure*}[t]
	\begin{center}
		\includegraphics[scale=0.086]{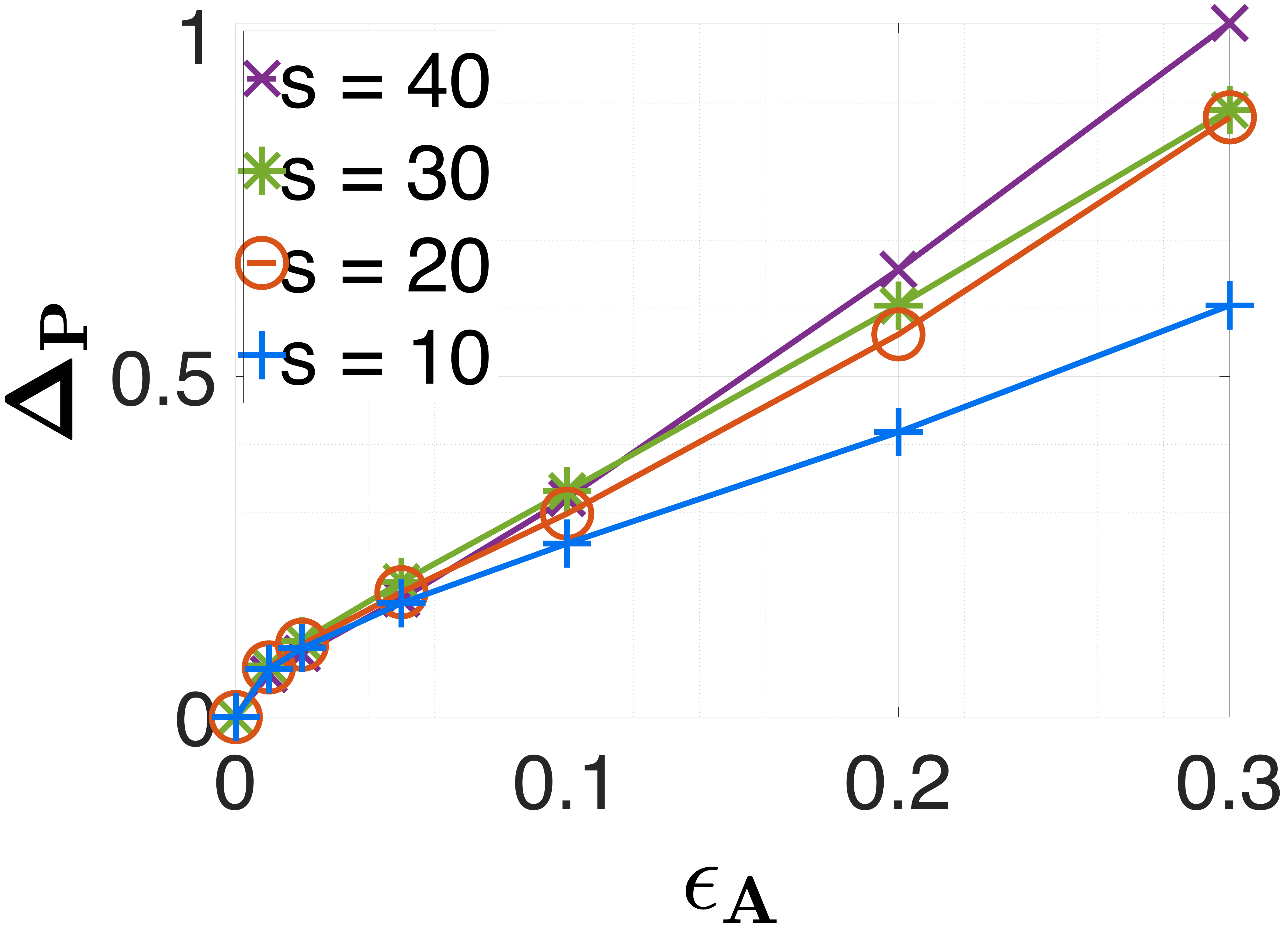}
		\hspace{.6em}
		\includegraphics[scale=0.086]{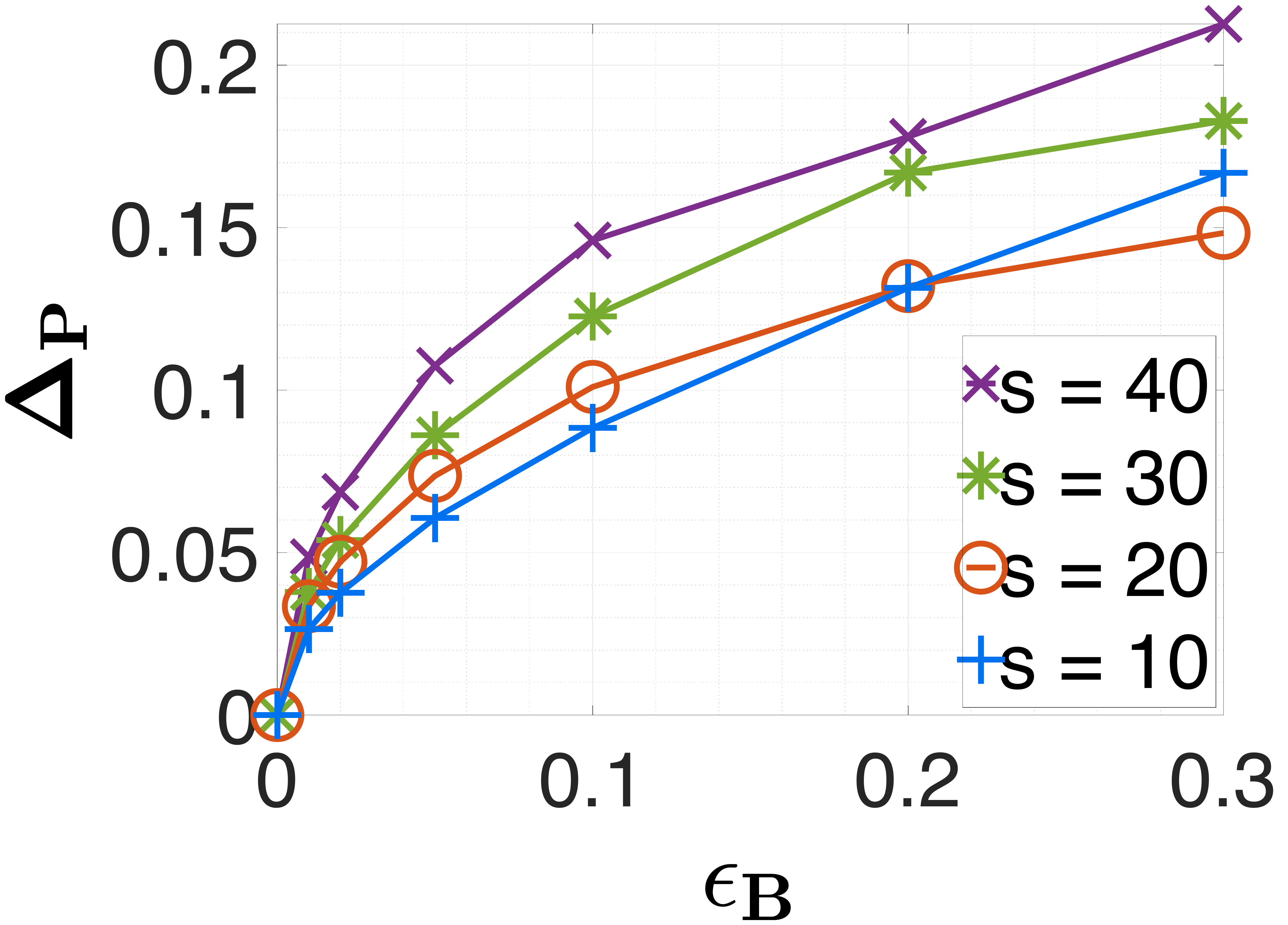}
		\hspace{.6em}
		\includegraphics[scale=0.086]{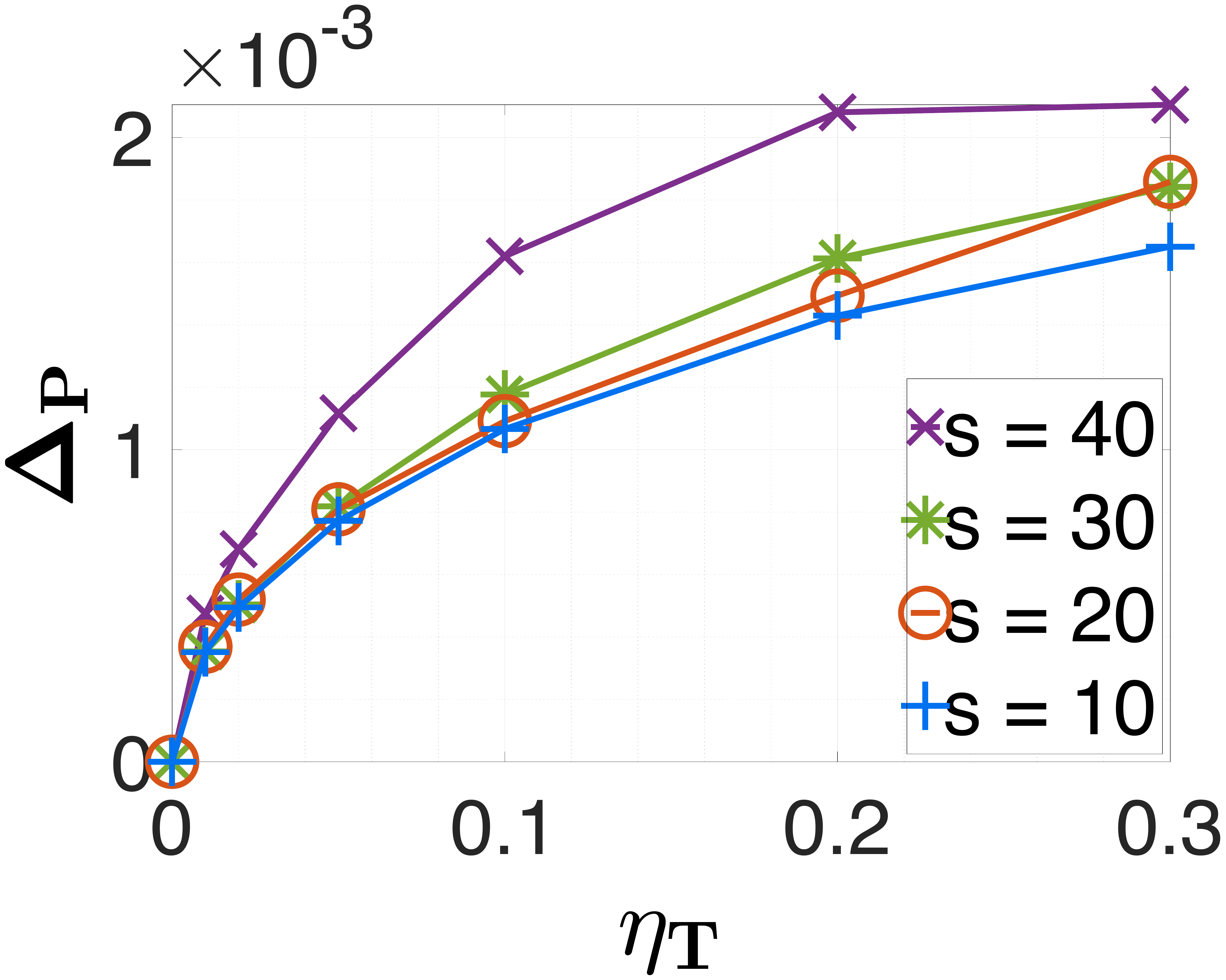} 
		\hspace{.6em}
		\includegraphics[scale=0.086]{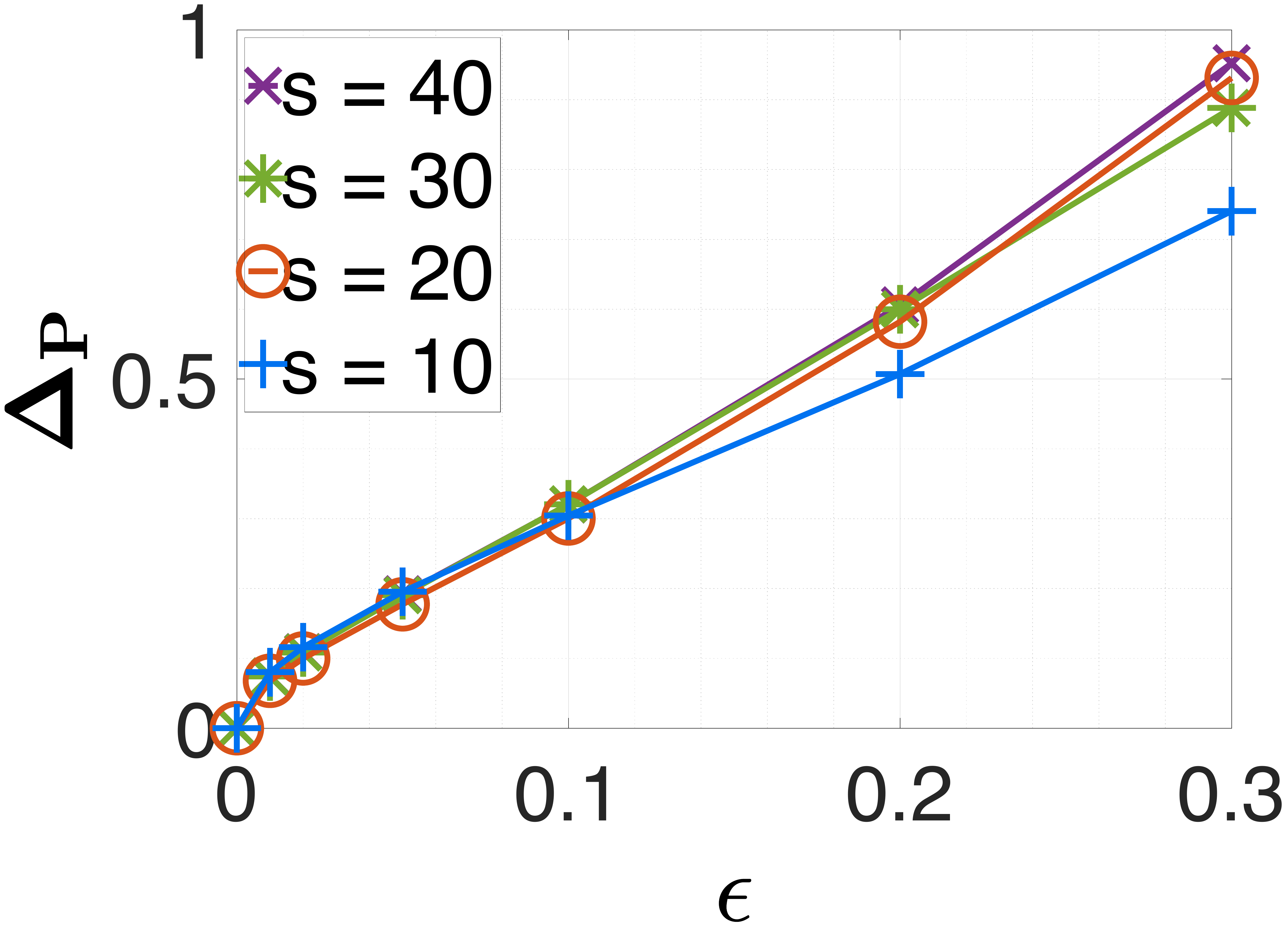} 
		\caption{{The performance of solving coupled Riccati equations via \NOMparam $(\vAhat_{1:s},\vBhat_{1:s},\vThat)$. Left to right: $\epsilon_\mtx{B}=\eta_\mtx{T}=0$, $\epsilon_\mtx{A}=\eta_\mtx{T}=0$, $\epsilon_\mtx{A}=\epsilon_\mtx{B}=0$,  and $\epsilon=\epsilon_\mtx{A}=\epsilon_\mtx{B}=\eta_\mtx{T}$. }}\label{fig:estABvsK}
	\end{center}
	\vspace{-12pt}
\end{figure*}

\begin{figure*}[t]
	\begin{center}
		\vspace{1em}
		\includegraphics[scale=0.085]{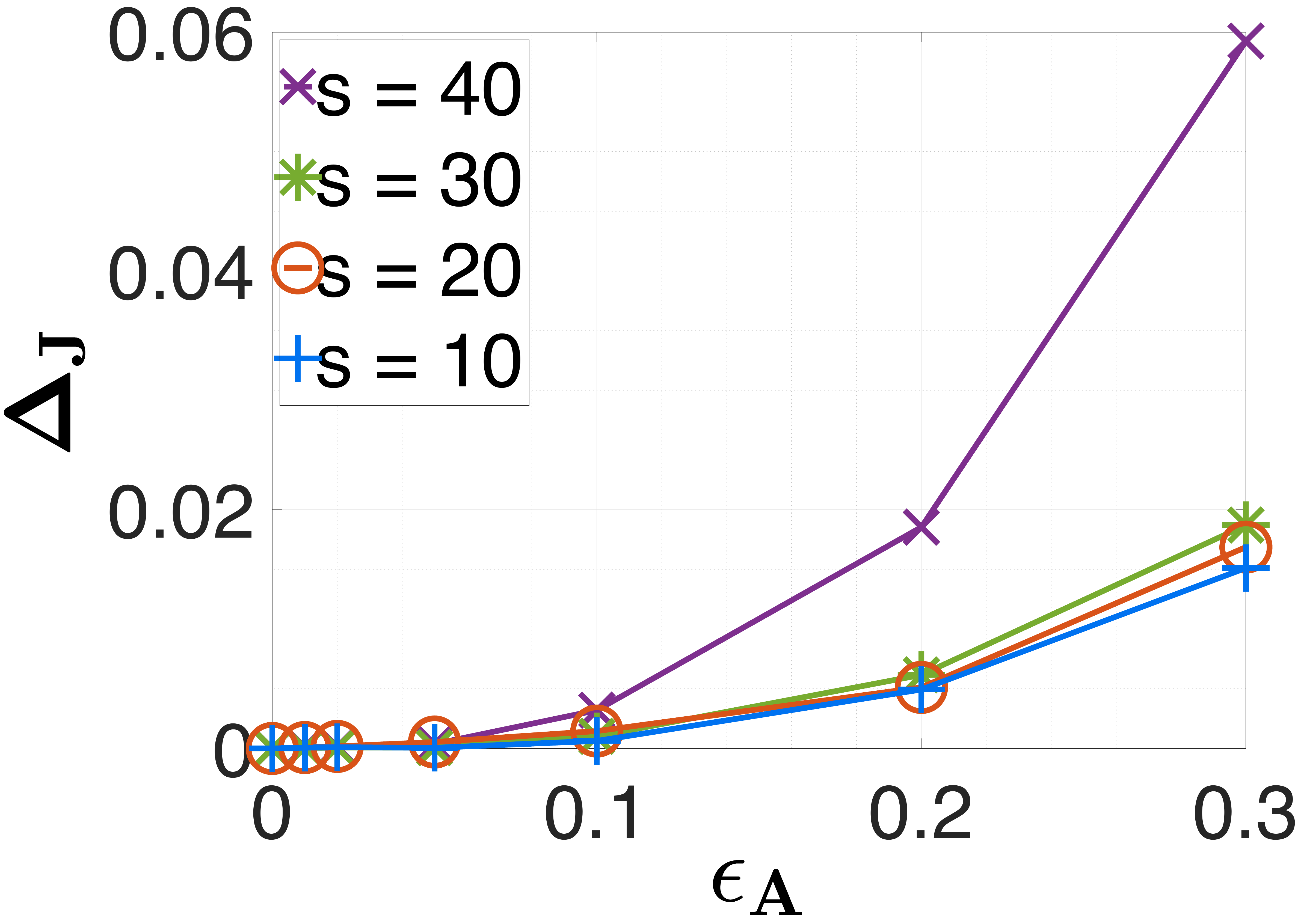}
		\hspace{.3em}
		\includegraphics[scale=0.085]{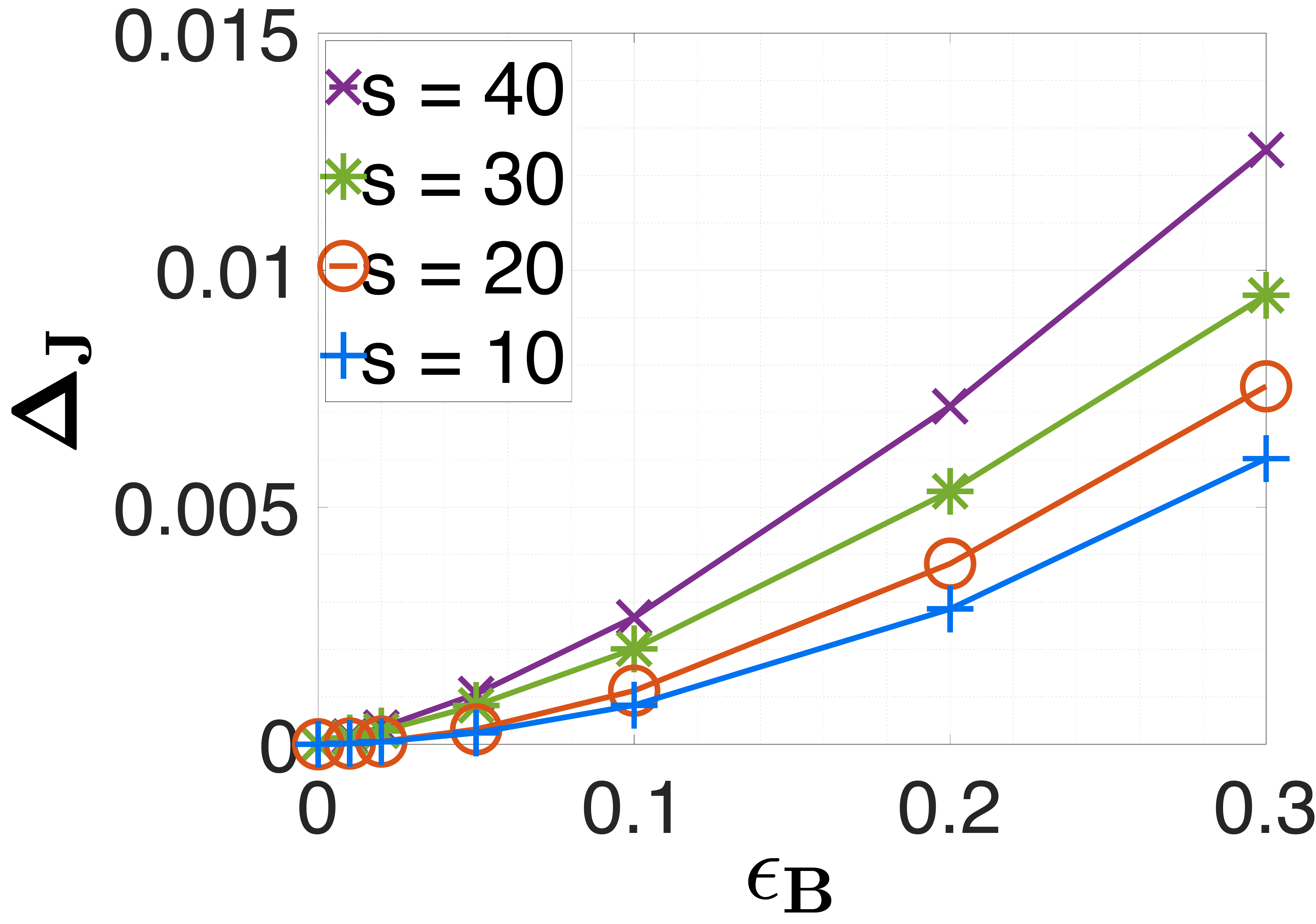}
		\hspace{.3em}
		\includegraphics[scale=0.085]{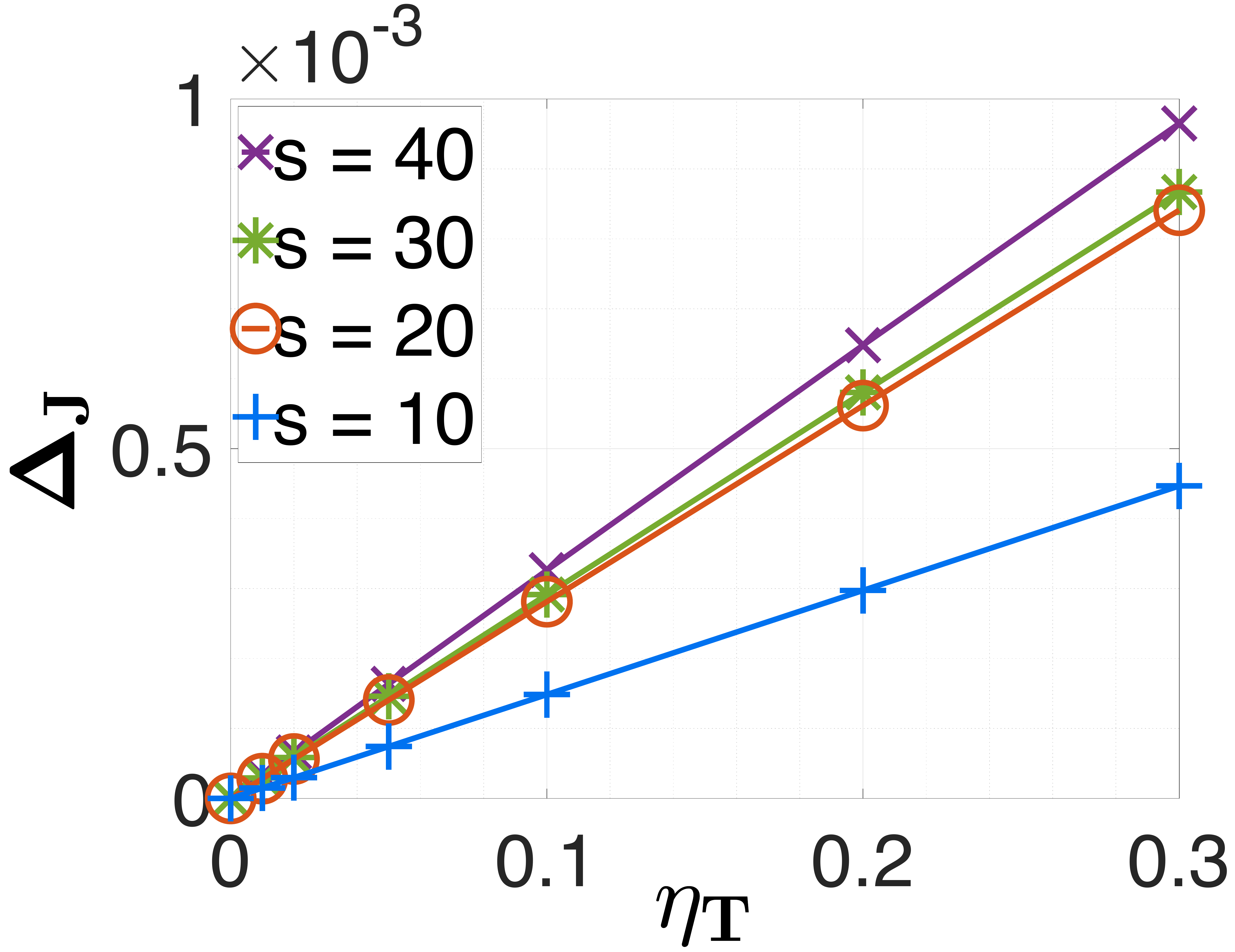} 
		\hspace{.3em}
		\includegraphics[scale=0.085]{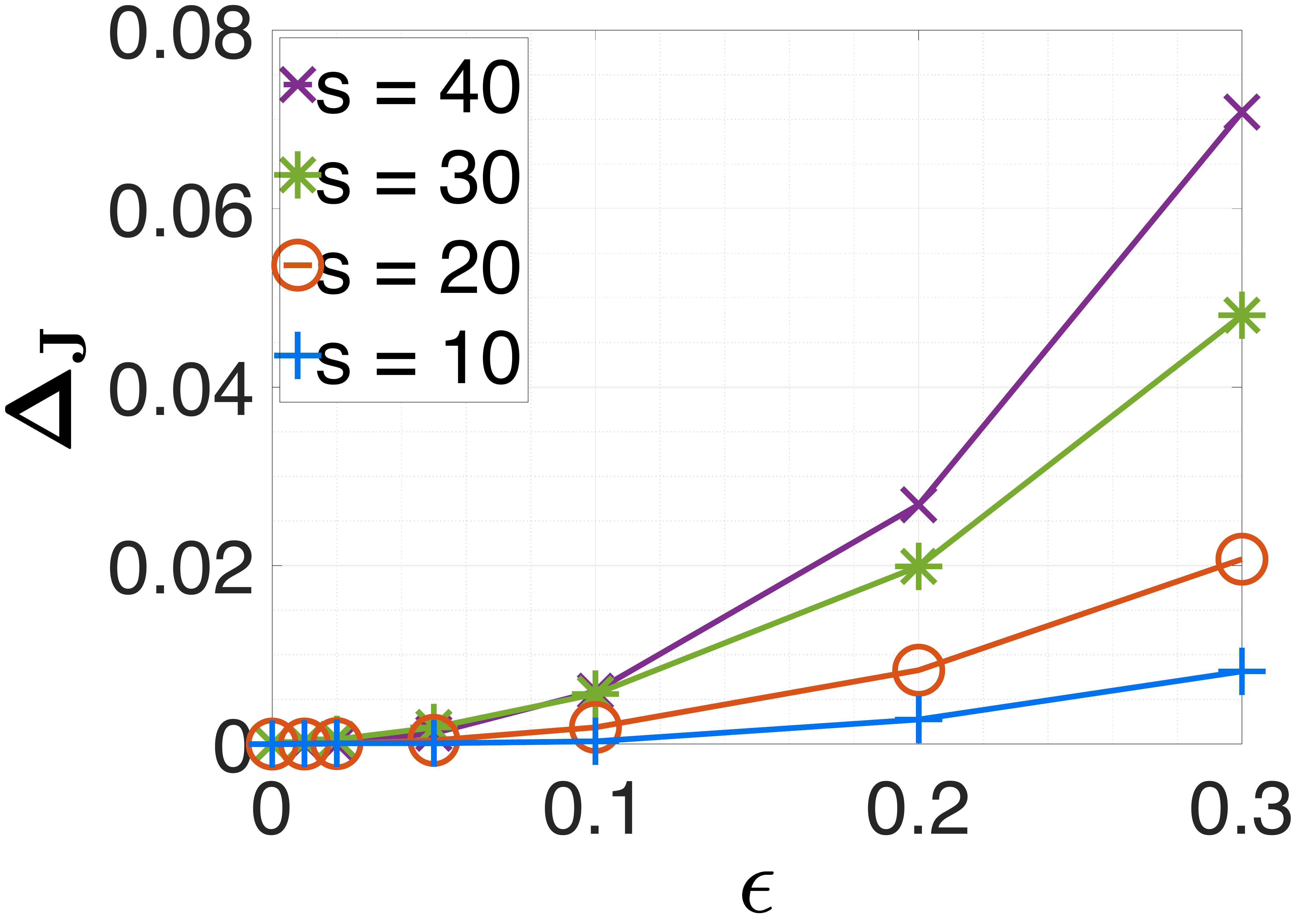} 
		\caption{{The performance of solving MJS--LQR problem via \NOMparam $(\vAhat_{1:s},\vBhat_{1:s},\vThat)$. Left to right: $\epsilon_\mtx{B}=\eta_\mtx{T}=0$, $\epsilon_\mtx{A}=\eta_\mtx{T}=0$, $\epsilon_\mtx{A}=\epsilon_\mtx{B}=0$,  and $\epsilon=\epsilon_\mtx{A}=\epsilon_\mtx{B}=\eta_\mtx{T}$. }}\label{fig:JestABvsK}
	\end{center}
	\vspace{-12pt}
\end{figure*}

Consider a system with $n$ states, $p$ inputs, and the number of modes $s$. The entries of the true system matrices $(\mtx{A}_{1:s}^\star,\mtx{B}_{1:s}^\star)$ were generated randomly from a standard normal distribution.  
We scaled each $\mtx{A}^\star_i$ to have spectral radius equal to $0.3$ to obtain a mean square stable MJS. For the cost matrices $(\mtx{Q}_{1:s}, \mtx{R}_{1:s})$, and the \NOMparam $(\hat{\mtx{A}}_{1:s}, \hat{\mtx{B}}_{1:s})$, we set  
\begin{align*}
	\mtx{Q}_i&=\underline{\mtx{Q}}_i\underline{\mtx{Q}}_i^\top, ~~~~ \mtx{R}_i=\underline{\mtx{R}}_i\underline{\mtx{R}}_i^\top, \\ 
	\hat{\mtx{A}}_i &=\mtx{A}_{i}^\star + \epsilon_\mtx{A} \underline{\mtx{A}}_i,~~~~
	\hat{\mtx{B}}_i =\mtx{B}_{i}^\star + \epsilon_\mtx{B} \underline{\mtx{B}}_i,
\end{align*}
where $\underline{\mtx{Q}}_i$, $\underline{\mtx{R}}_i$, $ \underline{\mtx{A}}_i$,  and $ \underline{\mtx{B}}_i$ were generated using $\texttt{randn}$ function; and $\epsilon_\mtx{A}$ and $\epsilon_\mtx{B}$ are some fixed scalars. The \NOMparam $\hat{\mtx{T}}$ was sampled from a Dirichlet Process $\texttt{Dir}((s-1)\cdot \mtx{I}_s + 1)$. To generate the true Markov matrix $\mtx{T}^\star$ for MJS, we let $ \mtx{T}^\star = \hat{\mtx{T}} + \mtx{E}$, where the perturbation $\mtx{E} = \eta_{\mtx{T}} ( \texttt{Dir}((s-1)\cdot \mtx{I}_s + 1))-\hat{\mtx{T}})$. We note that when $\eta_\mtx{T}=0$, there is no perturbation at all and $\mtx{T}^\star = \hat{\mtx{T}}$; when $\eta_\mtx{T}=1$, $\mtx{T}^\star$ will preserve no information of $\hat{\mtx{T}}$. We also assume that we had equal probability of starting in any initial mode. 




We next study how the system errors vary with $\epsilon_\mtx{A},\epsilon_\mtx{B}, \eta_\mtx{T} \in \{0.01, 0.02, 0.05, 0.1,0.2,0.3\}$, and the number of modes $s \in \{10,20, 30, 40\}$. We set the number of states and inputs to $n=10$ and $p=5$, respectively. For each choice of $\epsilon_\mtx{A}$, $\epsilon_\mtx{B}$, and $\eta_\mtx{T}$, we run $100$ experiments, and record $(\mtx{P}_{1:s}^\star,\hat{\mtx{P}}_{1:s})$ and the costs for these matrices. 

Let $\Delta_{\mtx{P}}$ denote the maximum of $ \|\hat{\mtx{P}}_i-\mtx{P}_i^\star\|/\|\mtx{P}_i^\star\|$ over the experiments and modes. We also use $ \Delta_J:=(\hat{J}-J^\star)/J^\star$ to denote the relative suboptimality gap for MJS, where $J^\star$ and $\hat{J}$ are the costs incurred by playing the optimal controller and the certainty equivalent controller on the true system, respectively. In Figures~\ref{fig:estABvsK} and \ref{fig:JestABvsK}, we plot $\Delta_{\mtx{P}}$ and $\Delta_{J}$ versus 
(i) $\epsilon_\mtx{A} ( \epsilon_\mtx{B}=\eta_\mtx{T}=0)$, (ii) $\epsilon_\mtx{B} (\epsilon_\mtx{A}=\eta_\mtx{T}=0)$, (iii) $ \eta_\mtx{T} (\epsilon_\mtx{A}=\epsilon_\mtx{B}=0)$, and (iv) $\epsilon=\epsilon_\mtx{A}=\epsilon_\mtx{B}=\eta_\mtx{T}$.


Figure~\ref{fig:estABvsK} presents four plots that demonstrate how $\Delta_{\mtx{P}}$ changes as $\epsilon_\mtx{A}$, $\epsilon_\mtx{B}$, $\eta_\mtx{T}$, and $\epsilon$ increase, respectively. Each curve on the plot represents a fixed number of modes $s$. These empirical results are all consistent with \eqref{eq_PPerturbResult}. In particular, Figure~\ref{fig:estABvsK} (right) shows that given the uncertainty in the system matrices and in the Markov transition matrix is bounded by $\epsilon$, the perturbation bound to coupled Riccati equations has the rate $\mathcal{O}(\epsilon)$ which
degrades linearly as $\epsilon$ increase.  Further, it can be easily seen that the gaps indeed increase with the number of modes in the system.

Figure~\ref{fig:JestABvsK} demonstrates the relationship between the relative suboptimality $\Delta_J$ and the five parameters $\epsilon_\mtx{A}$, $\epsilon_\mtx{B}$, $\eta_\mtx{T}$, $\epsilon$ and $s$. As can be seen in Figure~\ref{fig:JestABvsK} (right), given the uncertainty in the system matrices and in the Markov transition matrix is bounded by $\epsilon$, the perturbation bounds to the optimal cost decay quadratically ($\mathcal{O}(\epsilon^2)$) which is consistent with \eqref{eq_suboptimality}.

\section{Conclusions}\label{sec:conc}
In this work, we provide a perturbation analysis for cDARE, which arise in the solution of MJS-LQR, and an end-to-end suboptimality guarantee for certainty equivalence control for MJS-LQR. Our results show the robustness of the optimal policy to perturbations in system dynamics and establish the validity of the certainty equivalent control in a neighborhood of the original system. This work opens up multiple future directions. First, with proper system identification algorithms, we can analyze model-based online/adaptive algorithms where control policy is updated continuously over a single trajectory. Second, a natural extension would be to study MJS with output measurements where states are only partially observed, i.e., the LQG setting. This will require considering the dual coupled Riccati equations for filtering.



\bibliography{MJS_LQR_LCSS}
\bibliographystyle{IEEEtran}

\onecolumn
\appendix

\subsection{Useful matrix facts}
The following results on matrices will be used repeatedly throughout the proof and will be referred using the fact number.
\begin{fact} \label{lemma_4}
	Let $\vM$, $\vN$ be two symmetric and positive semidefinite matrices, then
	\begin{align}
		\norm{\vN (\vI + \vM \vN)^\inv} \leq \norm{\vN}, \label{eq_17} \\
		\norm{(\vI + \vM \vN)^\inv} \leq 1 + \norm{\vN} \norm{\vM}. \label{eq_18}
	\end{align}
\end{fact}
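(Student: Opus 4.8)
The plan is to reduce both inequalities to statements about a genuinely symmetric matrix, since the product $\vM\vN$ is in general not symmetric. First I would check that $\vI + \vM\vN$ is invertible: the matrix $\vM\vN$ shares its nonzero eigenvalues with $\vM^{1/2}\vN\vM^{1/2} \succeq 0$ (the usual fact that $\vA\vB$ and $\vB\vA$ have the same nonzero spectrum, here with $\vA = \vM^{1/2}$ and $\vB = \vN\vM^{1/2}$), so all eigenvalues of $\vM\vN$ are real and nonnegative, whence $\vI + \vM\vN$ has spectrum in $[1,\infty)$ and is invertible.

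For \eqref{eq_17}, set $\vS := \vN^{1/2}\vM\vN^{1/2} \succeq 0$, where $\vN^{1/2}$ is the positive semidefinite square root. A one-line computation gives the ``push-through'' identity $\vN^{1/2}(\vI + \vM\vN) = (\vI + \vS)\vN^{1/2}$. Left-multiplying by $\vN^{1/2}(\vI+\vS)^{\inv}$ and right-multiplying by $(\vI+\vM\vN)^{\inv}$ then yields
\[ \vN(\vI+\vM\vN)^{\inv} = \vN^{1/2}(\vI+\vS)^{\inv}\vN^{1/2}, \]
whose right-hand side is symmetric positive semidefinite. Submultiplicativity of the spectral norm together with $\norm{(\vI+\vS)^{\inv}} \le 1$ (valid since $\vS \succeq 0$ forces every eigenvalue of $\vI+\vS$ to be at least $1$) gives $\norm{\vN(\vI+\vM\vN)^{\inv}} \le \norm{\vN^{1/2}}^2 = \norm{\vN}$.

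For \eqref{eq_18}, I would use the resolvent-type identity $(\vI+\vM\vN)^{\inv} = \vI - \vM\vN(\vI+\vM\vN)^{\inv}$, which follows by expanding $(\vI+\vM\vN)(\vI+\vM\vN)^{\inv}=\vI$. Taking norms, applying the triangle inequality, and invoking \eqref{eq_17} gives $\norm{(\vI+\vM\vN)^{\inv}} \le 1 + \norm{\vM}\,\norm{\vN(\vI+\vM\vN)^{\inv}} \le 1 + \norm{\vN}\norm{\vM}$.

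The only real obstacle here is conceptual rather than computational: because $\vM\vN$ is not symmetric one cannot bound its resolvent directly, so the key move is the conjugation by $\vN^{1/2}$ that turns the problem into one about $\vI + \vS$ with $\vS$ symmetric positive semidefinite. A minor point is that $\vN$ may be singular, so one should take $\vN^{1/2}$ to be the PSD square root and verify the identity $\vN(\vI+\vM\vN)^{\inv} = \vN^{1/2}(\vI+\vS)^{\inv}\vN^{1/2}$ directly from the push-through identity (by right-multiplying the candidate expression by $\vI+\vM\vN$), rather than by inverting $\vN$.
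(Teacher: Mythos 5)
Your proof is correct and follows essentially the same route as the paper: \eqref{eq_18} is obtained exactly as in the paper via the identity $(\vI + \vM\vN)^\inv = \vI - \vM\vN(\vI+\vM\vN)^\inv$ together with \eqref{eq_17}. For \eqref{eq_17} the paper simply cites Lemma~7 of \cite{mania2019certainty}, and your $\vN^{1/2}$-conjugation (push-through) argument is a valid self-contained derivation of that cited result, so nothing is missing.
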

\begin{fact}
	Let $\vM$, $\vN$ be two arbitrary matrices, where $\vM$ and $\vM+\vN$ are invertible, then
	\begin{equation} \label{eq_19_tmp}
		(\vM + \vN)^\inv = \vM^\inv - \vM^\inv \vN (\vM + \vN)^\inv = \vM^\inv - (\vM + \vN)^\inv \vN \vM^\inv.
	\end{equation}
\end{fact}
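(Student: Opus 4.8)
The plan is to prove both equalities by the elementary ``insert a resolution of the identity, then factor the difference'' manipulation, relying only on the stated hypotheses that $\vM$ and $\vM+\vN$ are invertible (and never on invertibility of $\vN$). The key observation is that each of the two inverses may be rewritten by sandwiching the other invertible matrix between it and the identity: since $\vM^\inv \vM = \vI$ and $(\vM+\vN)^\inv(\vM+\vN) = \vI$, I may expand $\vM^\inv = \vM^\inv(\vM+\vN)(\vM+\vN)^\inv$ and $(\vM+\vN)^\inv = \vM^\inv\vM(\vM+\vN)^\inv$, and symmetrically with the factors placed on the opposite side. Subtracting two such representations of the same quantity produces the claimed identities directly.

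First I would establish the left equality. Subtracting the two expansions above and factoring $\vM^\inv$ on the left and $(\vM+\vN)^\inv$ on the right gives
\[
\vM^\inv - (\vM+\vN)^\inv = \vM^\inv\big[(\vM+\vN)-\vM\big](\vM+\vN)^\inv = \vM^\inv \vN (\vM+\vN)^\inv,
\]
and rearranging yields $(\vM+\vN)^\inv = \vM^\inv - \vM^\inv \vN (\vM+\vN)^\inv$. For the right equality I would carry out the mirror-image computation, now factoring the common inverse on the left: writing $\vM^\inv = (\vM+\vN)^\inv(\vM+\vN)\vM^\inv$ and $(\vM+\vN)^\inv = (\vM+\vN)^\inv\vM\vM^\inv$ and subtracting gives
\[
\vM^\inv - (\vM+\vN)^\inv = (\vM+\vN)^\inv\big[(\vM+\vN)-\vM\big]\vM^\inv = (\vM+\vN)^\inv \vN \vM^\inv,
\]
so that $(\vM+\vN)^\inv = \vM^\inv - (\vM+\vN)^\inv \vN \vM^\inv$. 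As an independent check I would verify the first identity by right-multiplying both sides by $\vM+\vN$ (the right-hand side collapses to $\vM^\inv(\vM+\vN) - \vM^\inv\vN = \vI$), and the second by left-multiplying by $\vM+\vN$.

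There is no genuine obstacle here, as the statement is a routine resolvent-type identity; the only point meriting care is bookkeeping, namely ensuring that every inverse that appears is one of the two matrices assumed invertible, so the derivation never tacitly inverts $\vN$. This is precisely what makes the identity usable in the perturbation arguments of the paper, where $\vN$ plays the role of a (possibly singular) perturbation and only the unperturbed and perturbed operators are known to be invertible.
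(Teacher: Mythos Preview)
Your proposal is correct and essentially matches the paper's own treatment: the paper simply asserts that \eqref{eq_19_tmp} ``follows from matrix inversion lemma'' without further detail, and your insert-identity-and-factor derivation is exactly the standard way to unpack that claim. Your care in noting that only $\vM$ and $\vM+\vN$ need be invertible (never $\vN$) is a worthwhile addition, since this is precisely how the identity is applied later in the perturbation arguments.
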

\begin{fact}
	For two arbitrary matrices $\vM$ and $\vN$ such that $\vI + \vM$ and $\vI + \vN$ are both invertible, we have
	\begin{equation} \label{eq_19}
		(\vI + \vM)^\inv - (\vI + \vN)^\inv = (\vI + \vM)^\inv (\vN - \vM) (\vI + \vN)^\inv. 
	\end{equation}
\end{fact}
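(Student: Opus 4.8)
The plan is to prove the identity by a direct algebraic manipulation that only exploits the invertibility of $\vI+\vM$ and $\vI+\vN$. The core observation is that the difference of the two inverses can be "sandwiched" between $(\vI+\vM)^\inv$ on the left and $(\vI+\vN)^\inv$ on the right after inserting suitable factors of the identity, so that the mismatch between the two terms collapses to $\vN-\vM$.

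First I would rewrite each term of the left-hand side by inserting a compensating product of a matrix and its inverse. Specifically, $(\vI+\vM)^\inv = (\vI+\vM)^\inv (\vI+\vN)(\vI+\vN)^\inv$ and $(\vI+\vN)^\inv = (\vI+\vM)^\inv (\vI+\vM)(\vI+\vN)^\inv$, both of which are legitimate precisely because $\vI+\vM$ and $\vI+\vN$ are invertible. Subtracting the second from the first and factoring out $(\vI+\vM)^\inv$ on the left and $(\vI+\vN)^\inv$ on the right yields $(\vI+\vM)^\inv\big[(\vI+\vN)-(\vI+\vM)\big](\vI+\vN)^\inv$. The bracketed middle factor simplifies to $(\vI+\vN)-(\vI+\vM)=\vN-\vM$, which is exactly the claimed right-hand side.

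Alternatively, and perhaps more transparently, I would verify the identity by left-multiplying both sides by $\vI+\vM$ and right-multiplying by $\vI+\vN$; because both factors are invertible, this operation is an equivalence and loses no information. The left-hand side then reduces to $(\vI+\vN)-(\vI+\vM)=\vN-\vM$, while on the right-hand side the telescoping cancellations $(\vI+\vM)(\vI+\vM)^\inv=\vI$ and $(\vI+\vN)^\inv(\vI+\vN)=\vI$ likewise leave $\vN-\vM$. Matching the two reductions establishes the result.

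There is no substantive obstacle here: the entire content is recognizing the telescoping structure, and the only hypothesis that must be invoked is the invertibility of $\vI+\vM$ and $\vI+\vN$, which simultaneously licenses the insertion of identity factors (or the pre/post multiplication) and the final cancellation step. I would present the first approach as the main derivation, since it produces the stated equality directly rather than after clearing the inverses.
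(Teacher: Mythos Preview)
Your proof is correct and matches the paper's approach: the paper simply states that this fact follows from the matrix inversion lemma without giving further details, and your direct algebraic manipulation (inserting $(\vI+\vN)(\vI+\vN)^\inv$ and $(\vI+\vM)^\inv(\vI+\vM)$ and factoring) is precisely the standard one-line justification behind that citation.
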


Above, \eqref{eq_17} is due to \cite[Lemma~7]{mania2019certainty} (in their supplement). To see \eqref{eq_18}, first note that $(\vI + \vM \vN)^\inv = \vI - \vM \vN (\vI + \vM \vN)^\inv$ by matrix inversion lemma, and then apply \eqref{eq_17}. \eqref{eq_19_tmp} and \eqref{eq_19} also follow from matrix inversion lemma.

\begin{fact}\label{fact:tilde_vek}
	Consider a block-diagonal matrix $\vX \in \dm{\numSys \dimSt}{\numSys \dimSt}$ composed of $\vX_{1:\numSys}$ such that the $i$th diagonal block of $\vX$ is given by $\vX_i \in \dm{\dimSt}{\dimSt}$. Let $\tilde{\vek}(\cdot)$ be the operator that vectorizes all diagonal blocks of $\vX$ into a vector, i.e. $\tilde{\vek} (\vX):= (\vek(\vX_1),  \dots, \vek(\vX_\numSys))$. Let $\tilde{\vek}^\inv$ denote the inverse of $\tilde{\vek}$ such that $\tilde{\vek}^\inv(\tilde{\vek}(\vX)) = \vX$.
	Then,
	\begin{align}
		\norm{\tilde{\vek}} &= \sup_{\Xb = \diag(\Xb_{1:\numSys}), \norm{\Xb}=1} \norm{\tilde{\vek}(\Xb)} \overset{\text{(i)}}{=} \sqrt{n s} \label{eq_5} \\
		\norm{\tilde{\vek}^\inv} &= \sup_{\norm{\vx}=1} \norm{\tilde{\vek}^\inv(\vx)} \overset{\text{(ii)}}{=} 1. \label{eq_6}
	\end{align}
\end{fact}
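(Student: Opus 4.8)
The plan is to reduce both identities \eqref{eq_5} and \eqref{eq_6} to two elementary facts — that the spectral norm of a block-diagonal matrix equals the largest block spectral norm, $\norm{\diag(\vX_{1:\numSys})} = \max_{i\in[\numSys]} \norm{\vX_i}$, and that the Euclidean norm of the vectorization equals the Frobenius norm, $\norm{\tilde{\vek}(\vX)}^2 = \sum_{i=1}^\numSys \norm{\vek(\vX_i)}^2 = \sum_{i=1}^\numSys \tf{\vX_i}^2$ — combined with the rank-induced comparison $\norm{\vX_i} \leq \tf{\vX_i} \leq \sqrt{\dimSt}\,\norm{\vX_i}$, which holds for every $\vX_i \in \dm{\dimSt}{\dimSt}$ since such a matrix has at most $\dimSt$ nonzero singular values. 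After establishing the inequalities this way, I would exhibit explicit extremizers to show that equality, not merely an inequality, holds in each case.

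For \eqref{eq_5}: take any block-diagonal $\vX = \diag(\vX_{1:\numSys})$ with $\norm{\vX} = 1$. Then $\norm{\tilde{\vek}(\vX)}^2 = \sum_{i=1}^\numSys \tf{\vX_i}^2 \leq \dimSt \sum_{i=1}^\numSys \norm{\vX_i}^2 \leq \dimSt\numSys \max_{i\in[\numSys]} \norm{\vX_i}^2 = \dimSt\numSys$, so $\norm{\tilde{\vek}} \leq \sqrt{\dimSt\numSys}$. For the reverse direction I plug in $\vX = \Iden_{\dimSt\numSys}$, i.e. every diagonal block equal to $\Iden_\dimSt$: then $\norm{\vX} = 1$ while $\norm{\tilde{\vek}(\vX)}^2 = \sum_{i=1}^\numSys \tf{\Iden_\dimSt}^2 = \numSys\dimSt$, so the supremum is attained and equals $\sqrt{\dimSt\numSys}$.

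For \eqref{eq_6}: given $\vx \in \dm{\numSys\dimSt^2}{1}$ with $\norm{\vx} = 1$, write $\vx = (\vx_1,\dots,\vx_\numSys)$ with $\vx_i \in \dm{\dimSt^2}{1}$ and set $\vX_i := \vek^\inv(\vx_i)$, so that $\tilde{\vek}^\inv(\vx) = \diag(\vX_{1:\numSys})$. Then $\norm{\tilde{\vek}^\inv(\vx)} = \max_{i\in[\numSys]} \norm{\vX_i} \leq \max_{i\in[\numSys]} \tf{\vX_i} = \max_{i\in[\numSys]} \norm{\vx_i} \leq \big(\sum_{i=1}^\numSys \norm{\vx_i}^2\big)^{1/2} = \norm{\vx} = 1$, hence $\norm{\tilde{\vek}^\inv} \leq 1$. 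For equality, take $\vx$ supported on a single block with $\vx_1 = \vek(\ve_1\ve_1^\T)$, where $\ve_1$ is the first standard basis vector of $\dm{\dimSt}{1}$, and $\vx_i = 0$ for $i \geq 2$: since $\ve_1\ve_1^\T$ is rank one with unit Frobenius (hence unit spectral) norm, $\norm{\vx} = 1$ and $\norm{\tilde{\vek}^\inv(\vx)} = \norm{\ve_1\ve_1^\T} = 1$.

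There is no real obstacle here; the only thing meriting care is the bookkeeping of which norm sits on which side of each operator — spectral norm on the (block-diagonal) matrix side, Euclidean norm on the vector side — together with the observation that the entire discrepancy between the $\sqrt{\dimSt\numSys}$ of \eqref{eq_5} and the $1$ of \eqref{eq_6} comes from the slack in $\tf{\vX_i} \leq \sqrt{\dimSt}\,\norm{\vX_i}$: it is saturated by an identity block (the extremizer for \eqref{eq_5}) and entirely avoided by a rank-one block (the extremizer for \eqref{eq_6}).
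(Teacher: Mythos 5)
Your proposal is correct and follows essentially the same route as the paper: the paper simply cites the extremizers $\vX_i=\Iden_\dimSt$ for \eqref{eq_5} and $\vx=(1,0,\dots,0)$ for \eqref{eq_6} (the latter being exactly your $\vek(\ve_1\ve_1^\T)$ block), leaving the matching upper bounds implicit. You have merely written out those omitted upper bounds via $\norm{\vX_i}\leq\tf{\vX_i}\leq\sqrt{\dimSt}\,\norm{\vX_i}$, which is the natural filling-in of the same argument.
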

Fact~\ref{fact:tilde_vek} follows by noting that (i) achieves the supremum when $\Xb_i = \Ib_n$ for all $i$ and (ii) achieves the supremum when $\vx = (1, 0, \dots, 0)$. The following fact is adapted from~\cite{mania2019certainty} and is useful in bounding the spectral radius of a perturbed matrix.

\begin{fact}\label{fact:rho perturbation}
	Let $\Mb$ be an arbitrary matrix in $\R^{n \times n}$ and let $\rho(\Mb) \leq \gamma$. Then for all $k \geq 1$ and real matrices $\Delta$ of appropriate dimensions, we have $\norm{(\Mb + \mtx{\Delta})^k} \leq \tau(\Mb,\gamma)\big(\tau(\Mb,\gamma)\norm{\mtx{\Delta}}+\gamma\big)^k$.
\end{fact}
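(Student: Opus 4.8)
The plan is to prove the inequality by directly expanding $(\Mb + \mtx{\Delta})^k$ into its $2^k$ non-commutative monomials and controlling each one using the defining property of $\tau(\Mb,\gamma)$. First I would extract the single consequence of Definition~\ref{def_tau} that is needed: for every integer $m \geq 0$ we have $\norm{\Mb^m} \leq \tau(\Mb,\gamma)\,\gamma^m$, where the case $m=0$ is covered because $\tau(\Mb,\gamma) \geq 1$. This is the only place the hypothesis $\rho(\Mb)\leq\gamma$ enters, namely through finiteness of the supremum defining $\tau(\Mb,\gamma)$ via Gelfand's formula; everything downstream is purely algebraic.

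Next I would expand the product. Writing $(\Mb+\mtx{\Delta})^k = \sum_w w$, where $w$ ranges over all words of length $k$ in the two letters $\Mb$ and $\mtx{\Delta}$, each word with exactly $j$ occurrences of $\mtx{\Delta}$ has the form $\Mb^{a_0}\mtx{\Delta}\,\Mb^{a_1}\mtx{\Delta}\cdots\mtx{\Delta}\,\Mb^{a_j}$ with $a_0,\dots,a_j \geq 0$ and $\sum_{\ell=0}^{j} a_\ell = k-j$. The $j$ copies of $\mtx{\Delta}$ partition the word into $j+1$ consecutive blocks of $\Mb$'s, and the crucial observation is that these $\Mb$-exponents always sum to $k-j$ regardless of where the $\mtx{\Delta}$'s sit. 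Applying submultiplicativity of the spectral norm together with the first-step bound gives, for any such word, $\norm{w} \leq \tau(\Mb,\gamma)^{j+1}\,\gamma^{a_0+\cdots+a_j}\,\norm{\mtx{\Delta}}^{j} = \tau(\Mb,\gamma)^{j+1}\,\gamma^{k-j}\,\norm{\mtx{\Delta}}^{j}$, a bound that depends only on the count $j$ and not on the arrangement.

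Then I would count and sum. There are exactly $\binom{k}{j}$ words of length $k$ with $j$ copies of $\mtx{\Delta}$, obtained by choosing the $j$ positions. Using the triangle inequality over all words and grouping by $j$,
\[
\norm{(\Mb+\mtx{\Delta})^k} \leq \sum_{j=0}^{k}\binom{k}{j}\,\tau(\Mb,\gamma)^{j+1}\,\gamma^{k-j}\,\norm{\mtx{\Delta}}^{j} = \tau(\Mb,\gamma)\sum_{j=0}^{k}\binom{k}{j}\big(\tau(\Mb,\gamma)\norm{\mtx{\Delta}}\big)^{j}\gamma^{k-j},
\]
and the binomial theorem collapses the sum to $\tau(\Mb,\gamma)\big(\tau(\Mb,\gamma)\norm{\mtx{\Delta}}+\gamma\big)^{k}$, which is the claimed bound.

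The only genuinely delicate point is the non-commutativity: one must resist treating $(\Mb+\mtx{\Delta})^k$ as an ordinary binomial and instead track all ordered words, verifying the per-word estimate depends only on $j$. The edge cases of empty $\Mb$-blocks ($\Mb^0 = \Iden$) and of the extreme words $\Mb^k$ ($j=0$) and $\mtx{\Delta}^k$ ($j=k$) are all absorbed by the $m=0$ instance of the first-step bound together with $\tau(\Mb,\gamma)\geq 1$, so no case needs separate treatment.
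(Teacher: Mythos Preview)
Your proof is correct. The paper does not prove this fact itself but simply adapts it from \cite{mania2019certainty}; your combinatorial expansion into length-$k$ words in $\Mb$ and $\mtx{\Delta}$, followed by the binomial recombination, is exactly the standard argument used there, so your approach matches the intended one.
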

\subsection{Proof of Theorem \ref{prop_riccatiPertb}}

We first provide a lemma, used in the proof of Theorem~\ref{prop_riccatiPertb},  that establishes that positive definite solutions of coupled algebraic Riccati equations are unique among the set of positive semidefinite matrices when they exist. Note that existence of such solutions guarantee stabilizability in mean square sense. 

\begin{lemma}\label{lemma_lqrSolPD} (\cite[Lemma A.14]{costa2006discrete}) Consider cDARE($\vA_{1:\numSys}, \vB_{1:\numSys}, \vT$) for a generic MJS($\vA_{1:\numSys}, \vB_{1:\numSys}, \vT$) and LQR cost matrices $\vQ_{1:\numSys}, \vR_{1:\numSys}$. Assume $\vQ_i, \vR_i \succ 0$ for all $i \in [\numSys]$. Then, if there exists a positive definite solution $\vP_{1:\numSys}$ to cDARE($\vA_{1:\numSys}, \vB_{1:\numSys}, \vT$), then it is the unique solution among $\curlybrackets{\vX_{1:\numSys}: \vX_i \succeq 0, \forall~ i \in [s]}$. 
\end{lemma}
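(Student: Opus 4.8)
The plan is to reduce uniqueness to the mean-square stability characterization of Lemma~\ref{prop MSS}(c) through a completion-of-squares identity for the coupled equation. Write $\varphi_i(\vV_{1:\numSys}):=\sum_{j=1}^\numSys[\vT]_{ij}\vV_j$, and for a solution $\vX_{1:\numSys}$ of cDARE$(\vA_{1:\numSys},\vB_{1:\numSys},\vT)$ let $\vK^X_i:=-(\vR_i+\vB_i^\T\varphi_i(\vX_{1:\numSys})\vB_i)^{-1}\vB_i^\T\varphi_i(\vX_{1:\numSys})\vA_i$ and $\vL^X_i:=\vA_i+\vB_i\vK^X_i$. The first step is the algebraic identity that for \emph{every} feedback tuple $\vK_{1:\numSys}$, with $\vL_i:=\vA_i+\vB_i\vK_i$,
\begin{equation*}
	\begin{aligned}
		\vX_i={}&\vL_i^\T\varphi_i(\vX_{1:\numSys})\vL_i+\vQ_i+\vK_i^\T\vR_i\vK_i\\
		&-(\vK_i-\vK^X_i)^\T\big(\vR_i+\vB_i^\T\varphi_i(\vX_{1:\numSys})\vB_i\big)(\vK_i-\vK^X_i);
	\end{aligned}
\end{equation*}
taking $\vK=\vK^X$ collapses this to the Lyapunov form $\vX_i=\vL^{X\T}_i\varphi_i(\vX_{1:\numSys})\vL^X_i+\vQ_i+\vK^{X\T}_i\vR_i\vK^X_i$. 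Since $\vR_i\succ0$ and $\varphi_i(\vX_{1:\numSys})\succeq0$ the inverse above is well defined, and the Lyapunov form then yields $\vX_i\succeq\vQ_i\succ0$, so every solution with $\vX_i\succeq0$ is automatically positive definite.

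The second step invokes Lemma~\ref{prop MSS}(c). From $\vX_i\succ0$ and $\vX_i-\vL^{X\T}_i\varphi_i(\vX_{1:\numSys})\vL^X_i=\vQ_i+\vK^{X\T}_i\vR_i\vK^X_i\succ0$ we conclude that the autonomous MJS with state matrices $\vL^X_{1:\numSys}$ is mean-square stable; equivalently the positive linear map $\Lcal^X:\vV_{1:\numSys}\mapsto(\vL^{X\T}_i\varphi_i(\vV_{1:\numSys})\vL^X_i)_{i\in[\numSys]}$, represented by an augmented matrix $\widetilde{\vL^X}$ built as in~\eqref{def_augmentedL}, has $\rho(\widetilde{\vL^X})<1$, so $(\Lcal^X)^k\to0$ and $\sum_{k\ge0}(\Lcal^X)^k$ converges. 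The identical argument applies to the hypothesized positive definite solution $\vP_{1:\numSys}$, with feedback $\vK^P$, closed loop $\vL^P$, operator $\Lcal^P$, and $\rho(\widetilde{\vL^P})<1$.

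The third step is a two-sided squeeze on $\vD_i:=\vP_i-\vX_i$. Applying the Step~1 identity to $\vX_{1:\numSys}$ with feedback $\vK^P$ and subtracting the Lyapunov form of $\vP_{1:\numSys}$ gives $\vD_i=\Lcal^P(\vD)_i+\vG_i$, where $\vG_i:=(\vK^P_i-\vK^X_i)^\T(\vR_i+\vB_i^\T\varphi_i(\vX_{1:\numSys})\vB_i)(\vK^P_i-\vK^X_i)\succeq0$; iterating and using $\rho(\widetilde{\vL^P})<1$ yields $\vD=\sum_{k\ge0}(\Lcal^P)^k\vG\succeq0$, hence $\vP_i\succeq\vX_i$ for all $i$. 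The symmetric computation — applying the identity to $\vP_{1:\numSys}$ with feedback $\vK^X$ and subtracting the Lyapunov form of $\vX_{1:\numSys}$ — produces $\vD_i=\Lcal^X(\vD)_i-\vH_i$ with $\vH_i:=(\vK^X_i-\vK^P_i)^\T(\vR_i+\vB_i^\T\varphi_i(\vP_{1:\numSys})\vB_i)(\vK^X_i-\vK^P_i)\succeq0$, so $\vD=-\sum_{k\ge0}(\Lcal^X)^k\vH\preceq0$. Combining the two inequalities forces $\vD_i=0$ for every $i$, i.e.\ $\vX_{1:\numSys}=\vP_{1:\numSys}$, which is the asserted uniqueness.

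I expect the only genuinely delicate part to be Step~1: deriving the completion-of-squares identity for the \emph{coupled} Riccati equation and, in particular, tracking signs carefully so that the two residual quadratic terms $\vG$ and $\vH$ enter the two Lyapunov-type recursions with opposite signs (one added, one subtracted). Once the identity is in hand, the remaining steps are routine — positivity and the spectral-radius bound for $\Lcal^P$ and $\Lcal^X$ follow directly from Lemma~\ref{prop MSS}(c) via Step~2, and the squeeze is just summing convergent series of positive semidefinite terms.
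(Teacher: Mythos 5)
Your proof is correct, and it takes a genuinely different route from the paper: the paper does not prove this lemma at all, but imports it directly as \cite[Lemma A.14]{costa2006discrete} (an earlier draft argument, left commented out in the source, only showed that a positive definite solution yields a mean-square stabilizing controller and then still fell back on Costa's uniqueness-of-stabilizing-solution result together with Lemma~\ref{lemma_vanillaLQRResult}). You instead give a self-contained comparison argument: your completion-of-squares identity checks out (expanding $\vL_i^\T\varphi_i(\vX_{1:\numSys})\vL_i$ and using $(\vR_i+\vB_i^\T\varphi_i(\vX_{1:\numSys})\vB_i)\vK_i^X=-\vB_i^\T\varphi_i(\vX_{1:\numSys})\vA_i$ reproduces the cDARE exactly), the Lyapunov form gives $\vX_i\succeq\vQ_i\succ0$ so any PSD solution is PD, Lemma~\ref{prop MSS}(c) then yields $\rho$ of the augmented closed-loop matrix strictly below one for \emph{both} solutions (and your operator $\Lcal^X$ is indeed represented by the augmented matrix of \eqref{def_augmentedL}, as the paper's own vectorization of $\Tcal$ confirms), and the two recursions $\vD=\Lcal^P(\vD)+\vG$ and $\vD=\Lcal^X(\vD)-\vH$ with $\vG,\vH\succeq0$ squeeze $\vD$ to zero since $\Lcal^P,\Lcal^X$ preserve positive semidefiniteness and their powers vanish. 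What your route buys is independence from the textbook: it makes the lemma verifiable within the paper using only Lemma~\ref{prop MSS}, at the cost of about a page of algebra; what the paper's citation buys is brevity and reliance on a standard reference, where essentially the same mean-square-stabilizing-solution comparison is carried out. The only presentational gap is that you assert the completion-of-squares identity rather than derive it, but since it is a finite algebraic verification (and the delicate sign bookkeeping you flag is exactly right), this is a matter of writing it out, not a missing idea.
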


\begin{proof}[Main Proof for Theorem \ref{prop_riccatiPertb}]
	{\,} \\
	The outline of the proof is as follows:
	\begin{enumerate}[label=(\alph*)]
		\item We first construct an operator $\Kcal(\vX_{1:\numSys})$ using the difference between the true cDARE$(\vA^\star_{1:\numSys}, \vB^\star_{1:\numSys}, \vT^\star)$ and the \NOMcDARE cDARE$(\vAhat_{1:s}, \vBhat_{1:s}, \vThat)$, whose fixed point(s) $\vX_{1:\numSys}^\star$ (when exist) will guarantee $\vP_{1:s}^\star + \vX_{1:\numSys}^\star$ is a solution to the \NOMcDARE cDARE$(\vAhat_{1:s}, \vBhat_{1:s}, \vThat)$.
		\item Then we show when $\epsilon$ and $\eta$ are small enough, $\Kcal(\vX_{1:\numSys})$ will be a contraction mapping on a closed set $\Scal_\nu$ whose radius $\nu$ is a function of $\epsilon$ and $\eta$. Thus, there exists a unique fixed point $\vX^\star_{1:\numSys} \in \Scal_\nu$ by contraction mapping theorem, and $\vP^\star_{1:s} + \vX^\star_{1:\numSys}$ is a solution to the \NOMcDARE cDARE$(\vAhat_{1:s}, \vBhat_{1:s}, \vThat)$.
		\item Finally, given there exists a unique solution to the \NOMcDARE cDARE$(\vAhat_{1:s}, \vBhat_{1:s}, \vThat)$ in the neighborhood $\Scal_\nu$ of $\vP^\star_{1:\numSys}$, we will show this solution is the only possible solution among all positive semi-definite matrices. To do this, we first show $\vP^\star_i + \vX^\star_i \succ 0$. By Lemma \ref{lemma_lqrSolPD}, we know the \NOMcDARE cDARE \eqref{eq_CARE_perturbed} has a unique solution, given by $\vPhat_{1:\numSys}:=\vP^\star_{1:s} + \vX^\star_{1:\numSys}$, among $\curlybrackets{\vX_{1:\numSys}: \vX_i \succeq 0, \forall i}$. Furthermore, $\norm{\vPhat_{1:\numSys} - \vP^\star_{1:\numSys}} = \norm{\vX^\star_{1:\numSys}} \leq \nu(\epsilon, \eta)$.
	\end{enumerate}
	\noindent \textbf{Step (a):} Construct operator 
	$\Kcal$.
	
	First we define a few notations for the ease of exposition. For all $i \in [\numSys]$, let $\Sb_i^\star := \vB_i^\star \Rb_i^{-1} {\vB_i^\star}^\T$ and $\vShat_i := \vBhat_i \Rb_i^{-1} \vBhat_i^\T$. 
	Define block diagonal matrices $\vA^\star$, $\vAhat$, $\vB^\star$, $\vBhat$, $\Qb$, $\Rb$, $\vP^\star$, $\vK^\star$, $\vL^\star$, $\vS^\star$, $\vShat$, $\Xb$, $\vPhi^\star(\Xb)$, $\hat{\vPhi}(\Xb)$ such that their $i$th diagonal blocks are given by $\vA^\star_i$, $\vAhat_i$, $\vB_i^\star$, $\vBhat_i$, $\Qb_i$, $\Rb_i$, $\vP^\star_i$, $\vK^\star_i$, $\vL^\star_i$, $\Sb_i^\star$, $\vShat_i$, $\Xb_i$, $\varphi^\star_i(\Xb_{1:s})$, $\hat{\varphi}_i(\Xb_{1:s})$ respectively. Note that $\vX_{1:\numSys}$ represent arguments of matrix functions or unknown variables used in matrix equations, e.g. \eqref{eq_CARE}. We will see many equations that hold for each single block also hold for the diagonally concatenated notations. 
	
	We have
	$
	\vK^\star = - (\Rb + {\vB^\star}^\T \vPhi^\star(\vP^\star) \vB^\star)^\inv {\vB^\star}^\T \vPhi^\star(\vP^\star) \vA^\star
	$
	from \eqref{eq_optfeedback}, then using the matrix inversion lemma, we can get
	\begin{equation} \label{eq_2}
		\vL^\star = \vA^\star + \vB^\star \vK^\star  = \parentheses{\Ib + \vS^\star \vPhi^\star(\vP^\star)}^\inv \vA^\star.
	\end{equation}	
	Furthermore, by diagonally concatenating cDARE \eqref{eq_CARE} and then applying the matrix inversion lemma again, we have
	\begin{equation}\label{eq_lcss_3}
		\begin{split}
			\Xb 
			&= {\vA^\star}^\T \vPhi^\star(\Xb) \parentheses{\Ib + \vS^\star \vPhi^\star(\Xb)}^\inv \vA^\star + \Qb.
		\end{split}		
	\end{equation}
	Then, we define the following Riccati difference function using the difference between LHS and RHS of \eqref{eq_lcss_3}, while keeping the RHS parameter dependent:
	\begin{equation}\label{eq_1}
		F(\Xb; \vA^\star, \vB^\star, \vT^\star) {:=} \Xb - {\vA^\star}^\T \vPhi^\star(\Xb) \parentheses{\Ib + \vS^\star \vPhi^\star(\Xb)}^\inv \vA^\star - \Qb.
	\end{equation}
	Though not explicitly listed, $\vPhi^\star$ and $\vS^\star$ on the RHS of \eqref{eq_1} depend on arguments $\vT^\star$ and $\vB^\star$ respectively. Since $\vP_{1:\numSys}^\star$ is the solution to the true $\text{cDARE}(\vA_{1:s}^\star, \vB_{1:s}^\star, \vT^\star)$, we have $F(\vP^\star; \vA^\star, \vB^\star, \vT^\star)=0$. Similarly, if there exists solution $\vPhat_{1:\numSys}$ to the \NOMcDARE $\text{cDARE}(\vAhat_{1:s}, \vBhat_{1:s}, \vThat)$, then $\vPhat:=\diag(\vPhat_{1:\numSys})$ would satisfy $F(\vPhat; \vAhat, \vBhat, \vThat)=0$.
	
	Now we consider the function $F(\vP^\star + \Xb; \vA^\star, \vB^\star, \vT^\star)$. When $\vP^\star + \vX \succeq 0$, we have the following.
	\begin{equation}
		\begin{split}
			F(\vP^\star + \Xb; &\vA^\star, \vB^\star, \vT^\star)\\
			\overset{\text{(i)}}{=} & \vP^\star + \Xb - {\vA^\star}^\T \squarebracketsbig{\vPhi^\star(\vP^\star) + \vPhi^\star(\vX)} \squarebracketsbig{\Ib + \vS^\star \vPhi^\star(\vP^\star) + \vS^\star \vPhi^\star(\vX)}^\inv \vA^\star - \Qb \\
			\overset{\text{(ii)}}{=} & \vP^\star + \Xb - {\vA^\star}^\T \squarebracketsbig{\vPhi^\star(\vP^\star) + \vPhi^\star(\vX)} \\
			\quad& \squarebrackets{ (\Ib + \vS^\star \vPhi^\star(\vP^\star))^\inv - \underbrace{\parentheses{\Ib + \vS^\star \vPhi^\star(\vP^\star) + \vS^\star \vPhi^\star(\vX)}^\inv}_{=:\vGamma} \vS^\star \vPhi^\star(\Xb) (\Ib + \vS^\star \vPhi^\star(\vP^\star))^\inv} \vA^\star - \Qb \\
			= & \vP^\star + \Xb - {\vA^\star}^\T \squarebracketsbig{\vPhi^\star(\vP^\star) + \vPhi^\star(\vX)} \squarebracketsbig{ \Ib - \vGamma \vS^\star \vPhi^\star(\Xb)} \squarebracketsbig{\Ib + \vS^\star \vPhi^\star(\vP^\star)}^\inv \vA^\star - \Qb \\
			\overset{\text{(iii)}}{=} & \vP^\star + \Xb - {\vA^\star}^\T \squarebracketsbig{\vPhi^\star(\vP^\star) + \vPhi^\star(\vX)} \squarebracketsbig{ \Ib - \vGamma \vS^\star \vPhi^\star(\Xb)} \vL^\star - \Qb \\
			\overset{\text{(iv)}}{=} & \Xb - {\vA^\star}^\T \squarebracketsbig{\vPhi^\star(\vP^\star) + \vPhi^\star(\vX)} \squarebracketsbig{ \Ib - \vGamma \vS^\star \vPhi^\star(\Xb)} \vL^\star  + {\vA^\star}^\T \vPhi^\star(\vP^\star) \vL^\star \\
			= & \Xb - {\vA^\star}^\T \curlybracketsbig{\squarebracketsbig{\vPhi^\star(\vP^\star) + \vPhi^\star(\vX)} \squarebracketsbig{ \Ib - \vGamma \vS^\star \vPhi^\star(\Xb)} - \vPhi^\star(\vP^\star)}\vL^\star \\
			\overset{\text{(v)}}{=}  & \Xb - {\vL^\star}^\T (\Ib + \vPhi^\star(\vP^\star) \vS^\star)\curlybracketsbig{\squarebracketsbig{\vPhi^\star(\vP^\star) + \vPhi^\star(\vX)} \squarebracketsbig{ \Ib - \vGamma \vS^\star \vPhi^\star(\Xb)} - \vPhi^\star(\vP^\star)}\vL^\star \\
			= & \Xb - {\vL^\star}^\T \underbrace{(\Ib + \vPhi^\star(\vP^\star) \vS^\star) \squarebracketsbig{ - \vPhi^\star(\vP^\star) \vGamma \vS^\star + \Ib - \vPhi^\star(\Xb) \vGamma \vS^\star}}_{=: \vLambda}  \vPhi^\star(\Xb) \vL^\star  \\
		\end{split}
	\end{equation}
	where (i) follows from the definition in \eqref{eq_1};
	(ii) follows from the identity $(\Mb + \Nb)^\inv = \Mb^\inv - (\Mb + \Nb)^\inv \Nb \Mb^\inv$ and the fact $\Ib + \vS^\star \vPhi^\star(\vP^\star) + \vS^\star \vPhi^\star(\vX)$ is invertible due to the assumption that $\vP^\star + \vX \succeq 0$; (iii) and (v) follow from the fact $\parentheses{\Ib + \vS^\star \vPhi^\star(\vP^\star)}^\inv \vA^\star = \vL^\star$ in \eqref{eq_2}; (iv) follows from the fact $F(\vP^\star; \vA^\star, \vB^\star, \vT^\star) = \vP^\star - {\vA^\star}^\T \vPhi^\star(\Xb) \vL^\star - \Qb = 0$ in \eqref{eq_1}.
	Note that
	\begin{equation}
		\begin{split}
			\vLambda 
			&= -\vPhi^\star(\vP^\star) \vGamma \vS^\star + \Ib - \vPhi^\star(\Xb) \vGamma \vS^\star - \vPhi^\star(\vP^\star) \vS^\star \vPhi^\star(\vP^\star) \vGamma \vS^\star + \vPhi^\star(\vP^\star) \vS^\star - \vPhi^\star(\vP^\star) \vS^\star \vPhi^\star(\Xb) \vGamma \vS^\star \\
			&= - \curlybracketsbig{\vPhi^\star(\vP^\star) + \vPhi^\star(\vP^\star) \vS^\star \vPhi^\star(\vP^\star) - \vPhi^\star(\vP^\star) \squarebracketsbig{ (\Ib + \vS^\star \vPhi^\star(\vP^\star) + \vS^\star \vPhi^\star(\Xb))} + \vPhi^\star(\vP^\star) \vS^\star \vPhi^\star(\Xb)} \vGamma \vS^\star \\
			& \quad + \Ib - \vPhi^\star(\Xb)\vGamma \vS^\star \\
			&= \Ib - \vPhi^\star(\Xb)\vGamma \vS^\star.
		\end{split}	
	\end{equation}
	Therefore
	\begin{equation}
		F(\vP^\star + \Xb; \vA^\star, \vB^\star, \vT^\star) = \Xb - {\vL^\star}^\T \vPhi^\star(\Xb) \vL^\star + {\vL^\star}^\T \vPhi^\star(\Xb) \parentheses{\Ib + \vS^\star \vPhi^\star(\vP^\star) + \vS^\star \vPhi^\star(\Xb)}^\inv \vS^\star \vPhi^\star(\Xb) \vL^\star.
	\end{equation}
	If we define
	\begin{align}
		\Tcal(\Xb) &:= \Xb - {\vL^\star}^\T \vPhi^\star(\Xb) \vL^\star, \\
		\Hcal(\Xb) &:= {\vL^\star}^\T \vPhi^\star(\Xb) \parentheses{\Ib + \vS^\star \vPhi^\star(\vP^\star) + \vS^\star \vPhi^\star(\Xb)}^\inv \vS^\star \vPhi^\star(\Xb) \vL^\star,
	\end{align}
	we have
	\begin{equation}\label{lemma_1}
		F(\vP^\star+\Xb; \vA^\star, \vB^\star, \vT^\star) = \Tcal(\Xb) + \Hcal(\Xb).
	\end{equation}
	
	Let $\vY_i := \vX_i - {\vL^\star_i}^\T \varphi^\star_i(\vX_{1:\numSys}) \vL^\star_i$, and $\vY: = \diag(\vY_{1:\numSys})$, then linear operator $\Tcal$ can be viewed as $\Tcal: \vX \mapsto \vY$. By vectorization, we see for every $i$
	\begin{equation}
		\parentheses{\Ib - [\vT^\star]_{ii} \cdot {\vL_i^\star}^\T \otimes {\vL_i^\star}^\T} \vek(\Xb_i) 
		- \sum_{j\neq i} [\vT^\star]_{ij} {\vL_i^\star}^\T \otimes {\vL_i^\star}^\T \vek(\Xb_j) = \vek(\vY_i).
	\end{equation}
	
	Stacking this equation for all $i$, we have $\parentheses{\Ib - \vLtil^\star} \tilde{\vek}(\vX) = \tilde{\vek}(\vY)$,
	where $\tilde{\vek}(\cdot)$ is defined in Fact~\ref{fact:tilde_vek}. From the stability discussion in Sec \ref{sec:main}, we know  $\rho(\vLtil^\star)<1$, thus $\parentheses{\Ib - \vLtil^\star}$ is invertible, and $\Tcal^\inv$ is given by
	\begin{equation}\label{eq:Tinv_def}\Xb = \Tcal^\inv (\vY) = \tilde{\vek}^\inv \circ \parentheses{\Ib - \vLtil^\star}^\inv \circ \tilde{\vek} (\vY),\end{equation}
	where $\circ$ denotes operator composition. 
	Since $\Tcal^\inv$ is well defined, we can construct the following operator:
	\begin{equation}
		\label{eq:defKcal}
		\Kcal(\Xb) = \Tcal^\inv ( F(\vP^\star+\Xb; \vA^\star, \vB^\star, \vT^\star)
		- F(\vP^\star+\Xb; \vAhat, \vBhat, \vThat) - \Hcal(\Xb) ).
	\end{equation}
	From \eqref{lemma_1}, we see that if there exists a fixed point $\vX^\star$ for $\Kcal$, then $F(\vP^\star + \vX^\star; \vAhat, \vBhat, \vThat)=0$, i.e. $\vP_{1:\numSys}^\star+\Xb_{1:\numSys}$ is a solution to the \NOMcDARE $\text{cDARE}(\vAhat_{1:s}, \vBhat_{1:s}, \vThat)$.	 
	
	\noindent \textbf{Step (b):} Show that $\Kcal$ is a contraction to conclude existence of a perturbed solution.
	
	We consider the set
	\begin{equation}
		\Scal_\nu:= \curlybrackets{\Xb: \norm{\Xb} \leq \nu, \Xb = \diag(\Xb_{1:\numSys}), \vP^\star+\Xb \succeq 0}.
	\end{equation}
	We will show when $\nu$ is small enough, $\Kcal$ maps $\Scal_\nu$ into itself and is a contraction mapping. Thus, $\Kcal$ is guaranteed to have a fixed point in $\Scal_\nu$. To do this, we first present a lemma that bounds $\Kcal$ when $\epsilon$ and $\nu$ are sufficiently small. Then, we provide a choice of $\nu$ that makes this bound valid.
	\begin{lemma}\label{lemma_3}
		Assume $\epsilon \leq \min \curlybrackets{\norm{\vB^\star}, 1}$. Suppose $\vX, \vX_1, \vX_2 \in \Scal_\nu$ with $\nu \leq \min \curlybrackets{1, \norm{\vS^\star}^\inv}$, then
		\begin{align}
			\norm{\Kcal(\vX)} &\leq \frac{\sqrt{\dimSt \numSys} \tau (\vLtil^\star, \gamma)}{1-\gamma} \left( \norm{\vL^\star}^2 \norm{\vS^\star} \nu^2 {+}  3 C_\epsilon \epsilon {+} C_\eta \eta \right) \label{eq_12}\\
			\norm{\Kcal(\vX_1) - \Kcal(\vX_2)} &\leq \frac{\sqrt{\dimSt \numSys} \tau (\vLtil^\star, \gamma)}{1-\gamma} \left( 3 \norm{\vL^\star}^2 \norm{\vS^\star} \nu \right. \left. + \norm{\vB^\star}_+^2 \norm{\vR^\inv}_+ (51 \epsilon/C_\epsilon^u + 2 \eta /C_\eta^u  ) \right) \norm{\vX_1 - \vX_2}. \label{eq_13}
		\end{align}
	\end{lemma}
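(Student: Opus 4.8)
The plan is to estimate $\Kcal$ directly from its definition \eqref{eq:defKcal}, which is assembled from three pieces that I will bound separately over $\Scal_\nu$: the linear inverse operator $\Tcal^\inv$, the \emph{model-mismatch} term $F(\vP^\star+\vX;\vA^\star,\vB^\star,\vT^\star)-F(\vP^\star+\vX;\vAhat,\vBhat,\vThat)$, and the quadratic term $\Hcal(\vX)$; for \eqref{eq_13} the same decomposition is evaluated at $\vX_1$ and at $\vX_2$ and the two are subtracted. Throughout I use that $\vP^\star+\vX\succeq0$ blockwise on $\Scal_\nu$, that $\vPhi^\star$ and $\hat{\vPhi}$ send positive semidefinite blocks to positive semidefinite blocks and are nonexpansive in spectral norm (since $\vT^\star,\vThat$ are row-stochastic, so $\|\vPhi^\star(\vV)\|\le\|\vV\|$), and that $\|\vX\|\le\nu\le1$ gives $\|\vP^\star+\vX\|\le\norm{\vP^\star}_+$.

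First I would fix the prefactor: by \eqref{eq:Tinv_def} and Fact~\ref{fact:tilde_vek}, $\|\Tcal^\inv\|\le\|\tilde{\vek}^\inv\|\,\|(\Ib-\vLtil^\star)^\inv\|\,\|\tilde{\vek}\|=\sqrt{\dimSt\numSys}\,\|(\Ib-\vLtil^\star)^\inv\|$, and since $\rho(\vLtil^\star)<1$ a Neumann series together with Definition~\ref{def_tau} gives $\|(\Ib-\vLtil^\star)^\inv\|\le\sum_{k\ge0}\|(\vLtil^\star)^k\|\le\tau(\vLtil^\star,\gamma)\sum_{k\ge0}\gamma^k=\tau(\vLtil^\star,\gamma)/(1-\gamma)$, so $\|\Tcal^\inv\|\le\frac{\sqrt{\dimSt\numSys}\,\tau(\vLtil^\star,\gamma)}{1-\gamma}$, the common factor in \eqref{eq_12}--\eqref{eq_13}. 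Next I would bound $\Hcal$: the rearrangement $\big(\Ib+\vS^\star\vPhi^\star(\vP^\star)+\vS^\star\vPhi^\star(\vX)\big)^\inv\vS^\star=\vS^\star\big(\Ib+\vPhi^\star(\vP^\star+\vX)\vS^\star\big)^\inv$ has norm $\le\|\vS^\star\|$ by \eqref{eq_17} ($\vS^\star$ and $\vPhi^\star(\vP^\star+\vX)$ being symmetric positive semidefinite), and with $\|\vPhi^\star(\vX)\|\le\nu$ twice and $\|\vL^\star\|$ twice this gives $\|\Hcal(\vX)\|\le\|\vL^\star\|^2\|\vS^\star\|\nu^2$. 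For the Lipschitz estimate, $\Hcal(\vX_1)-\Hcal(\vX_2)$ splits by the product rule into three terms in which respectively the left $\vPhi^\star$-factor, the middle inverse factor, and the right $\vPhi^\star$-factor change; the middle difference is handled with \eqref{eq_19_tmp}/\eqref{eq_19} and \eqref{eq_17}, and using $\|\vPhi^\star(\vX_1)-\vPhi^\star(\vX_2)\|\le\|\vX_1-\vX_2\|$, $\|\vX_j\|\le\nu$ and $\nu\le\|\vS^\star\|^\inv$ (to collapse a $\|\vS^\star\|^2\nu^2$ into $\|\vS^\star\|\nu$), each term is $\le\|\vL^\star\|^2\|\vS^\star\|\nu\,\|\vX_1-\vX_2\|$, giving the coefficient $3\|\vL^\star\|^2\|\vS^\star\|\nu$ in \eqref{eq_13}.

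The crux is the model-mismatch term. Since the $\vX-\Qb$ part of $F$ in \eqref{eq_1} is parameter-free, this difference equals $G(\vP^\star+\vX;\vAhat,\vBhat,\vThat)-G(\vP^\star+\vX;\vA^\star,\vB^\star,\vT^\star)$ with $G(\vZ;\vA,\vB,\vT):=\vA^\T\vPhi(\vZ)\big(\Ib+\vS\vPhi(\vZ)\big)^\inv\vA$, $\vS:=\vB\vR^\inv\vB^\T$ and $\vPhi$ the averaging operator of $\vT$. I would interpolate the three perturbations one factor at a time, $(\vA^\star,\vB^\star,\vT^\star)\to(\vAhat,\vB^\star,\vT^\star)\to(\vAhat,\vBhat,\vT^\star)\to(\vAhat,\vBhat,\vThat)$. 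The $\vA$-step, with $\|\vAhat-\vA^\star\|\le\epsilon$, gives by the product rule an $\order{\epsilon}$ contribution through the two outer $\vA$-factors; the $\vB$-step uses $\vShat-\vS^\star=\vBhat\vR^\inv(\vBhat-\vB^\star)^\T+(\vBhat-\vB^\star)\vR^\inv{\vB^\star}^\T$ (again $\order{\epsilon}$) and carries the induced change through the inverse via \eqref{eq_19_tmp}/\eqref{eq_19}; the $\vT$-step uses $\|\vPhi^\star(\vV)-\hat{\vPhi}(\vV)\|\le\|\vT^\star-\vThat\|_\infty\|\vV\|\le\eta\|\vV\|$, again carried through the inverse. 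Every intermediate factor is controlled with \eqref{eq_17}/\eqref{eq_18}, with $\|\vPhi(\vP^\star+\vX)\|\le\norm{\vP^\star}_+$, and with $\norm{\cdot}_+$-bounds on $\vA^\star,\vB^\star,\vR^\inv$ (using $\epsilon\le1$ so that $\vAhat,\vBhat,\vShat$ inherit them); folding the $\order{\epsilon^2}$ and $\order{\epsilon\nu}$ remainders into $\order{\epsilon}$ via $\epsilon\le\min\{\|\vB^\star\|,1\}$ and $\nu\le1$, the contributions aggregate exactly into $3C_\epsilon\epsilon+C_\eta\eta$ with $C_\epsilon,C_\eta$ from Section~\ref{sec:main}. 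For \eqref{eq_13} the same interpolation is applied to $\big[F(\vP^\star+\vX_1;\vA^\star,\vB^\star,\vT^\star)-F(\vP^\star+\vX_1;\vAhat,\vBhat,\vThat)\big]-\big[F(\vP^\star+\vX_2;\vA^\star,\vB^\star,\vT^\star)-F(\vP^\star+\vX_2;\vAhat,\vBhat,\vThat)\big]$; every term produced is multilinear in its matrix factors, each factor is $1$-Lipschitz in $\vX$ (the $\vPhi$'s directly, the inverses via \eqref{eq_19}), and each term carries a weight $\epsilon$ or $\eta$, so the sum is $\le\norm{\vB^\star}_+^2\norm{\vR^\inv}_+\big(51\epsilon/C_\epsilon^u+2\eta/C_\eta^u\big)\norm{\vX_1-\vX_2}$, the integers $51,2$ counting the number of terms produced.

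Assembling, $\|\Kcal(\vX)\|\le\|\Tcal^\inv\|\big(\|F(\vP^\star+\vX;\vA^\star,\vB^\star,\vT^\star)-F(\vP^\star+\vX;\vAhat,\vBhat,\vThat)\|+\|\Hcal(\vX)\|\big)$ gives \eqref{eq_12}, and the analogous combination of the Lipschitz bounds gives \eqref{eq_13}. The hypotheses $\epsilon\le\min\{\|\vB^\star\|,1\}$ and $\nu\le\min\{1,\|\vS^\star\|^\inv\}$ are exactly what keeps $\vAhat,\vBhat,\vShat$ and $\vP^\star+\vX$ inside the $\norm{\cdot}_+$-balls underlying $C_\epsilon,C_\eta$ and what lets the quadratic-in-$\epsilon$ and quadratic-in-$\nu$ remainders be absorbed at first order. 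I expect the main obstacle to be the model-mismatch estimate: carrying the interpolation bookkeeping tightly enough to land on the precise constants $C_\epsilon,C_\eta,C_\epsilon^u,C_\eta^u$ and the integer multipliers $3,51,2$, while keeping the $\vX$-dependence that enters through $\vPhi^\star(\vP^\star+\vX)$ inside the various inverses controlled uniformly over $\Scal_\nu$; the rest is careful but routine use of \eqref{eq_17}--\eqref{eq_19} and Fact~\ref{fact:tilde_vek}.
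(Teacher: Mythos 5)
Your plan is essentially the paper's proof: the same Neumann-series bound $\norm{\Tcal^\inv}\le\sqrt{\dimSt\numSys}\,\tau(\vLtil^\star,\gamma)/(1-\gamma)$ via Fact~\ref{fact:tilde_vek}, the same bounds on $\Hcal$ and its Lipschitz constant using \eqref{eq_17}--\eqref{eq_20} and $\nu\le\norm{\vS^\star}^\inv$, and the same treatment of the model-mismatch term by perturbing the parameters of $F$ in stages (the paper groups the $(\vA,\vB)$-change and the $\vT$-change into its $\Gcal_1$ and $\Gcal_2$). The one substantive caveat is the order of your interpolation: the paper applies the $\vT$-perturbation at the true pair $(\vA^\star,\vB^\star)$, so both inverse factors are controlled by \eqref{eq_24} and the $\eta$-coefficient comes out exactly $C_\eta$, whereas your ordering takes the $\vT$-step last at $(\vAhat,\vBhat)$, which forces \eqref{eq_25} with $\vShat$ (a factor of $4$ on each of two inverses) and inflates the $\eta$-coefficient to roughly $16\,C_\eta$; since these exact constants are reused verbatim in the thresholds of Theorem~\ref{prop_riccatiPertb}, you should either reorder the steps as the paper does or accept correspondingly larger constants.
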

	Proof is given in Appendix \ref{subsec_lcss_1}.
	To apply this lemma, let
	\begin{equation}\label{eq_9}
		\nu = 	\frac{\sqrt{\dimSt \numSys} \tau(\vLtil^\star, \gamma)}{1-\gamma}
		\parenthesesbig{
			6 C_\epsilon \epsilon
			+
			2 C_\eta \eta		
		}.
	\end{equation}
	Applying the upper bounds for $\epsilon$ and $\eta$ in the premises of Theorem \ref{prop_riccatiPertb} to \eqref{eq_9}, we have
	\begin{equation}\label{eq_10}
		\begin{split}
			\nu 
			&\leq \frac{1-\gamma}{\sqrt{\dimSt \numSys} \tau(\vLtil^\star, \gamma)} \parentheses{\frac{1}{34} \norm{\vB^\star}_+^{-2} \norm{\vP^\star}_+^{-1} \norm{\vR^\inv}_+^{-2} + \frac{1}{24}} \min \curlybrackets{\norm{\vB^\star}_+^{-2} \norm{\vR^\inv}_+^{-1} \norm{\vL^\star}_+^{-2},  \underline{\sigma}(\vP^\star)} \\
			&\leq \frac{1-\gamma}{12 \sqrt{\dimSt \numSys} \tau(\vLtil^\star, \gamma)} \min \curlybrackets{\norm{\vB^\star}_+^{-2} \norm{\vR^\inv}_+^{-1} \norm{\vL^\star}_+^{-2},  \underline{\sigma}(\vP^\star)} \\
			&\leq \min \hspace{-2pt} \curlybracketsbig{1, \frac{1}{\norm{\vS^\star}}, \frac{1-\gamma}{12 \sqrt{\dimSt \numSys} \tau(\vLtil^\star, \gamma) \norm{\vL^\star}^{2} \norm{\vS^\star}} , \frac{\underline{\sigma}(\vP^\star)}{12}},
		\end{split}
	\end{equation}
	where the second line follows from the fact $\norm{\cdot}_+ = \norm{\cdot}+1$, and the last line uses $\norm{\vS^\star} \leq \norm{\vB^\star}^2 \norm{\vR^\inv}$ by definition of $\vS^\star$.
	It is easy to see that the upper bound of $\epsilon$ in the premises of Theorem \ref{prop_riccatiPertb} guarantees $\epsilon \leq \min \curlybrackets{\norm{\vB^\star}, 1}$, which when combined with \eqref{eq_10}, makes the bounds in Lemma \ref{lemma_3} applicable and we get:
	\begin{equation}
		\norm{\Kcal(\vX)} \leq \frac{1}{12} \nu + \frac{1}{2} \nu = \frac{7}{12} \nu,
	\end{equation}
	by cancelling off $\epsilon$ and $\eta$ in \eqref{eq_12} using \eqref{eq_9}, and applying the third upper bound for $\nu$ in \eqref{eq_10}.
	Since $\nu \leq \underline{\sigma}(\vP^\star)$ in \eqref{eq_10}, we can see $\vP^\star + \Kcal(\vX) \succ 0$, thus $\Kcal(\vX) \in \Scal_\nu$, i.e. $\Kcal$ maps $\Scal_\nu$ into itself. Furthermore, applying upper bounds for $\epsilon$ and $\eta$ in Theorem \ref{prop_riccatiPertb} and the third upper bound for $\nu$ in \eqref{eq_10} to \eqref{eq_13} gives
	\begin{align*}
		\norm{\Kcal(\vX_1) - \Kcal(\vX_2)} 
		&\leq \parenthesesbig{\frac{1}{4} + \frac{1}{4} \frac{1-\gamma}{\sqrt{\dimSt \numSys} \tau(\vLtil^\star, \gamma)} \norm{\vL^\star}_+^{-2} + \frac{1}{24} \frac{1-\gamma}{\sqrt{\dimSt \numSys} \tau(\vLtil^\star, \gamma)} \norm{\vL^\star}_+^{-2}} \norm{\vX_1 - \vX_2} \\
		&\leq \frac{13}{24} \norm{\vX_1 - \vX_2}.
	\end{align*}
	We have shown $\Kcal(\vX)$ not only maps closed set $\Scal_\nu$ into itself but also is a contraction mapping on $\Scal_\nu$, which means $\Kcal(\vX)$ has a unique fixed point $\vX^\star \in \Scal_\nu$. By definition of $\Kcal(\vX)$ and the identity in \eqref{lemma_1}, we see  $F(\vP^\star+\vX^\star; \vAhat, \vBhat, \vThat) = 0$, i.e. $\vP^\star+\vX^\star$ is a solution to the \NOMcDARE $\text{cDARE}(\vAhat_{1:s}, \vBhat_{1:s}, \vThat)$. 
	
	\noindent \textbf{Step (c):} Show the uniqueness of the \NOMsoltn solution. 
	
	By definition of $\Scal_\nu$ and \eqref{eq_10}, we know $\norm{\vX^\star} \leq \nu < \underline{\sigma}(\vP^\star)$, thus $\vP^\star+\vX^\star \succ 0$. This implies $\vP^\star_i+\vX^\star_i \succ 0$ for all $i$. By Lemma \ref{lemma_lqrSolPD}, we know  $\vPhat_{1:\numSys}:=\vP^\star_{1:\numSys}+\vX^\star_{1:\numSys}$ is the only possible solution to the \NOMcDARE $\text{cDARE}(\vAhat_{1:s}, \vBhat_{1:s}, \vThat)$ among $\curlybrackets{\vX_{1:\numSys}: \vX_i \succeq 0, \forall i}$. 
	
	Finally, note that
	$
	\norm{\vPhat_{1:\numSys} - \vP^\star_{1:\numSys}}
	=
	\norm{\vX^\star_{1:\numSys}} 
	=
	\norm{\vX^\star} 
	\leq
	\nu
	$
	where $\nu$ is defined in \eqref{eq_9}, which concludes the proof for Theorem \ref{prop_riccatiPertb}.
\end{proof}

\subsection{Proof of Theorem~\ref{thrm:meta plus perturbation}}


We first outline our high-level proof strategy for Theorem \ref{thrm:meta plus perturbation}:
\begin{itemize}
	\item We first bound the mismatch between the optimal controller $\Kbs_i$ and the certainty equivalent controller $\vKhat_i$ in terms of the upper bounds on the quantities $\norm{\vAhat_i - \Abs_i}$, $\norm{\vBhat_i - \Bbs_i}$, $\norm{\vPhat_i - \Pbs_i}$ and $\norm{\vThat - \vT^\star}_\infty$.
	\item Next, assuming the certainty equivalent controller stabilizes the MJS in the mean-square sense, we quantify the suboptimality gap $\Jhat - J^\star$ in terms of the controller mismatch $\norm{\Kbs_i - \vKhat_i}$.
	\item Lastly, using the matrix Fact~\ref{fact:rho perturbation}, we derive an upper bound on the mismatch $\norm{\Kbs_i - \vKhat_i}$ so that the certainty equivalent controller $\vKhat_i$ indeed stabilizes the MJS in the mean-square sense. Using the derived upper bound in the expression of $\Jhat - J^\star$, we get our final result.
\end{itemize}

\begin{lemma}[Controller mismatch]\label{thrm:P to K}
	Let $\epsilon,\eta >0$ be fixed scalars. Suppose  $\norm{\vThat - \vT^\star}_\infty \leq \eta$, $\norm{\vAhat_i - \Abs_i} \leq \epsilon$, $\norm{\vBhat_i - \Bbs_i} \leq \epsilon$ and $\norm{\vPhat_i - \Pbs_i} \leq f(\epsilon,\eta)$ for all $i \in [s]$ and for some function $f$ such that $\max\{\epsilon,\eta\}\leq f(\epsilon,\eta)\leq \Gamma_\star$. Then, under Assumption~\ref{asmp_1}, we have
	\[
	\norm{\Kbs_i - \vKhat_i} \leq 28 \Gamma_\star^3 \frac{(\underline{\sigma}(\Rb_i)+\Gamma_\star^3)}{\underline{\sigma}(\Rb_i)^2} f(\epsilon,\eta) \quad \text{for all}\quad i \in [s]
	\]
\end{lemma}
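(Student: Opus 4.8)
The plan is to write both feedback gains in the common parametric form
$G(\vA,\vB,\vPhi) := -\bigl(\Rb_i + \vB^\T\vPhi\vB\bigr)^{-1}\vB^\T\vPhi\vA$,
so that by \eqref{eq_optfeedback} and \eqref{eq_15} we have $\Kbs_i = G(\Abs_i,\Bbs_i,\vPhi^\star_i)$ and $\vKhat_i = G(\vAhat_i,\vBhat_i,\hat{\vPhi}_i)$, where $\vPhi^\star_i := \varphis_i(\Pbs_{1:\numSys})$ and $\hat{\vPhi}_i := \varphihat_i(\vPhat_{1:\numSys})$. The first step is to collect a priori norm bounds: from $\epsilon,\eta \le f(\epsilon,\eta) \le \Gamma_\star$ and the definition of $\Gamma_\star$, each of $\norm{\Abs_i}, \norm{\Bbs_i}, \norm{\Pbs_i}$ is at most $\Gamma_\star$, hence $\norm{\vAhat_i}, \norm{\vBhat_i}, \norm{\vPhat_i} \le 2\Gamma_\star$; and since $\vT^\star$ and $\vThat$ have nonnegative rows summing to one, $\vPhi^\star_i$ and $\hat{\vPhi}_i$ are positive semidefinite (being convex combinations of the $\Pbs_j \succ 0$ and of the $\vPhat_j \succeq 0$, respectively), with $\norm{\vPhi^\star_i} \le \Gamma_\star$ and $\norm{\hat{\vPhi}_i} \le 2\Gamma_\star$.

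The key estimate is the perturbation of the averaged cost matrix. Splitting
$\vPhi^\star_i - \hat{\vPhi}_i = \sum_j [\vT^\star]_{ij}(\Pbs_j - \vPhat_j) + \sum_j ([\vT^\star]_{ij} - [\vThat]_{ij})\vPhat_j$,
the first sum has norm at most $\max_j \norm{\Pbs_j - \vPhat_j} \le f(\epsilon,\eta)$ because the $i$-th row of $\vT^\star$ sums to one, and the second has norm at most $\norm{\vT^\star - \vThat}_\infty \max_j \norm{\vPhat_j} \le 2\Gamma_\star\eta$; using $\eta \le f(\epsilon,\eta)$ and $\Gamma_\star \ge 1$ this yields $\norm{\vPhi^\star_i - \hat{\vPhi}_i} \lesssim \Gamma_\star f(\epsilon,\eta)$. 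Feeding this, together with $\norm{\Abs_i - \vAhat_i}, \norm{\Bbs_i - \vBhat_i} \le \epsilon \le f(\epsilon,\eta)$ and the a priori bounds, into the routine three-term split of each triple product (changing one factor at a time) gives $\norm{\Bbst_i\vPhi^\star_i\Abs_i - \vBhat_i^\T\hat{\vPhi}_i\vAhat_i} \lesssim \Gamma_\star^3 f(\epsilon,\eta)$ and $\norm{\Bbst_i\vPhi^\star_i\Bbs_i - \vBhat_i^\T\hat{\vPhi}_i\vBhat_i} \lesssim \Gamma_\star^3 f(\epsilon,\eta)$.

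Next I would control the inverse factor. Set $\vN^\star_i := (\Rb_i + \Bbst_i\vPhi^\star_i\Bbs_i)^{-1}$ and $\vNhat_i := (\Rb_i + \vBhat_i^\T\hat{\vPhi}_i\vBhat_i)^{-1}$. Positive semidefiniteness of $\vPhi^\star_i, \hat{\vPhi}_i$ gives $\Rb_i + \Bbst_i\vPhi^\star_i\Bbs_i \succeq \Rb_i$ and likewise for the hat version, so $\norm{\vN^\star_i}, \norm{\vNhat_i} \le \underline{\sigma}(\Rb_i)^{-1}$; and \eqref{eq_19_tmp}, applied to the difference of these two inverses and combined with the second product bound above, gives $\norm{\vN^\star_i - \vNhat_i} \lesssim \Gamma_\star^3 f(\epsilon,\eta)/\underline{\sigma}(\Rb_i)^2$. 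Finally I would split
$\Kbs_i - \vKhat_i = -(\vN^\star_i - \vNhat_i)\,\Bbst_i\vPhi^\star_i\Abs_i - \vNhat_i\bigl(\Bbst_i\vPhi^\star_i\Abs_i - \vBhat_i^\T\hat{\vPhi}_i\vAhat_i\bigr)$,
use $\norm{\Bbst_i\vPhi^\star_i\Abs_i} \le \Gamma_\star^3$ and $\norm{\vNhat_i} \le \underline{\sigma}(\Rb_i)^{-1}$, and collect terms to obtain $\norm{\Kbs_i - \vKhat_i} \lesssim \Gamma_\star^6 f(\epsilon,\eta)/\underline{\sigma}(\Rb_i)^2 + \Gamma_\star^3 f(\epsilon,\eta)/\underline{\sigma}(\Rb_i) = \Gamma_\star^3 \frac{\underline{\sigma}(\Rb_i) + \Gamma_\star^3}{\underline{\sigma}(\Rb_i)^2} f(\epsilon,\eta)$, which is the stated bound once the numerical constant ($28$) is tracked through the $\lesssim$'s.

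I expect the main obstacle to be bookkeeping rather than anything conceptual: pinning down the exact powers of $\Gamma_\star$ and the absolute constant requires disciplined use of $\norm{\cdot}_+ = \norm{\cdot} + 1$ and of the $\max_i$ conventions, and the clean conditioning bound $\norm{\vNhat_i} \le \underline{\sigma}(\Rb_i)^{-1}$ relies on $\hat{\vPhi}_i \succeq 0$, i.e.\ on $\vThat$ being a genuine (row-stochastic, nonnegative) transition matrix and on $\vPhat_i \succeq 0$, as guaranteed by Theorem~\ref{prop_riccatiPertb} in the setting where this lemma is applied. Once the $G$-decomposition and the averaged-cost estimate are in place, the remaining steps are straightforward.
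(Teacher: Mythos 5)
Your proof is correct and rests on the same perturbation machinery as the paper's — bounding each inverse by $1/\underline{\sigma}(\Rb_i)$ via positive semidefiniteness of the averaged matrix, using $\mtx{X}^{-1}-\mtx{Y}^{-1}=\mtx{X}^{-1}(\mtx{Y}-\mtx{X})\mtx{Y}^{-1}$ for the inverse difference, and expanding the triple products term by term — but it is organized around a different decomposition. The paper inserts an auxiliary gain $\tilde{\Kb}_i := -\big(\Rb_i+\vBhat_i^\T\varphis_i(\vPhat_{1:s})\vBhat_i\big)^{-1}\vBhat_i^\T\varphis_i(\vPhat_{1:s})\vAhat_i$ and handles the $(\vAhat,\vBhat,\vPhat)$-perturbation and the $\vThat$-perturbation in two separate triangle-inequality stages, obtaining $12\Gamma_\star^2\frac{\underline{\sigma}(\Rb_i)+\Gamma_\star^3}{\underline{\sigma}(\Rb_i)^2}f(\epsilon,\eta)$ and $16\Gamma_\star^3\frac{\underline{\sigma}(\Rb_i)+\Gamma_\star^3}{\underline{\sigma}(\Rb_i)^2}\eta$, which sum to the constant $28$. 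You instead fold both perturbation sources into the single estimate $\norm{\varphis_i(\Pbs_{1:s})-\varphihat_i(\vPhat_{1:s})}\le f(\epsilon,\eta)+\eta\max_j\norm{\vPhat_j}\le 3\Gamma_\star f(\epsilon,\eta)$, using row-stochasticity of $\vT^\star$ for the $\Pbs-\vPhat$ part and $\norm{\vT^\star-\vThat}_\infty\le\eta\le f(\epsilon,\eta)$ for the remainder, and then run the split $\Kbs_i-\vKhat_i=-(\vN^\star_i-\vNhat_i)\Bbst_i\varphis_i(\Pbs_{1:s})\Abs_i-\vNhat_i\big(\Bbst_i\varphis_i(\Pbs_{1:s})\Abs_i-\vBhat_i^\T\varphihat_i(\vPhat_{1:s})\vAhat_i\big)$ once. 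This is slightly more economical: tracking your constants (product differences at most $11\Gamma_\star^3 f(\epsilon,\eta)$, $\norm{\Bbst_i\varphis_i(\Pbs_{1:s})\Abs_i}\le\Gamma_\star^3$) yields roughly $11\,\Gamma_\star^3\frac{\underline{\sigma}(\Rb_i)+\Gamma_\star^3}{\underline{\sigma}(\Rb_i)^2}f(\epsilon,\eta)$, which is stronger than and hence implies the stated bound with $28$. You also correctly flag the one hidden hypothesis both arguments need, namely $\varphihat_i(\vPhat_{1:s})\succeq 0$ (row-stochastic $\vThat$ and $\vPhat_j\succeq 0$), which is available because $\vPhat_{1:s}$ is the positive definite solution of the perturbed cDARE; the paper uses the same fact implicitly in its bounds on the perturbed inverses.
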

\begin{proof}
	To begin, recall that given $\Pbs_{1:s}$ and $\vPhat_{1:s}$, the optimal controller and the certainty equivalent controller is given by
	\begin{align*}
		\Kbs_i &= -\big(\Rb_i + \Bbst_i \varphis_i(\Pbs_{1:\numSys}) \Bbs_i\big)^{-1} \Bbst_i \varphis_i(\Pbs_{1:\numSys}) \Abs_i, \\ 
		\text{and} \quad \vKhat_i &= -\big(\Rb_i + \vBhat_i^\T \hat{\varphi}_i(\vPhat_{1:\numSys}) \vBhat_i\big)^{-1} \vBhat_i^\T \hat{\varphi}_i(\vPhat_{1:\numSys}) \vAhat_i,
	\end{align*}
	respectively for all $i \in [s]$. As an auxiliary step, we define $\tilde{\Kb}_i := -\big(\Rb_i + \vBhat_i^\T \varphis_i(\vPhat_{1:\numSys}) \vBhat_i\big)^{-1} \vBhat_i^\T \varphis_i(\vPhat_{1:\numSys}) \vAhat_i$. Then, we have $\norm{\Kbs_i - \vKhat_i} \leq \norm{\Kbs_i - \tilde{\Kb}_i} + \norm{\tilde{\Kb}_i  - \vKhat_i}$. We bound the first term on the right side of this inequality as follows,
	\begin{align}
		\nonumber 
		\norm{\Kbs_i - \tilde{\Kb}_i} &= \norm{\big(\Rb_i + \Bbst_i \varphis_i(\Pbs_{1:\numSys}) \Bbs_i\big)^{-1} \Bbst_i \varphis_i(\Pbs_{1:\numSys}) \Abs_i - \big(\Rb_i + \vBhat_i^\T \varphis_i(\vPhat_{1:\numSys}) \vBhat_i\big)^{-1} \vBhat_i^\T \varphis_i(\vPhat_{1:\numSys}) \vAhat_i}, \\
		&\leq \norm{\big(\Rb_i + \Bbst_i \varphis_i(\Pbs_{1:\numSys}) \Bbs_i\big)^{-1}}\norm{\Bbst_i \varphis_i(\Pbs_{1:\numSys}) \Abs_i - \vBhat_i^\T \varphis_i(\vPhat_{1:\numSys}) \vAhat_i} \nonumber\\
		&+ \norm{\big(\Rb_i + \Bbst_i \varphis_i(\Pbs_{1:\numSys}) \Bbs_i\big)^{-1} - \big(\Rb_i + \vBhat_i^\T \varphis_i(\vPhat_{1:\numSys}) \vBhat_i\big)^{-1}}\norm{\vBhat_i^\T \varphis_i(\vPhat_{1:\numSys}) \vAhat_i}.\label{eqn:split KKtilde}
	\end{align}
	Note that, since we are assuming that the cost matrices $\Qb_i$ and $\Rb_i$ are positive definite for all $i \in [s]$, therefore, we have
	\begin{align}
		\norm{\big(\Rb_i + \Bbst_i \varphis_i(\Pbs_{1:\numSys}) \Bbs_i\big)^{-1}} = \frac{1}{\underline{\sigma}(\Rb_i + \Bbst_i \varphis_i(\Pbs_{1:\numSys}) \Bbs_i)}\leq \frac{1}{\underline{\sigma}(\Rb_i)}. \label{eqn:Lambda_1i}
	\end{align}
	Next, we bound the difference,
	\begin{align}
		&\norm{\Bbst_i \varphis_i(\Pbs_{1:\numSys}) \Abs_i - \vBhat_i^\T \varphis_i(\vPhat_{1:\numSys}) \vAhat_i} \nn \\
		&\quad\quad= \norm{\Bbst_i \varphis_i(\Pbs_{1:\numSys}) \Abs_i - (\Bbs_i+\Delta_{\Bbs_i})^\T (\varphis_i(\Pbs_{1:\numSys})+\varphis_i(\Delta_{\Pbs_{1:\numSys}})) (\Abs_i+\Delta_{\Abs_i})},\nn \\
		&\quad\quad= \norm{\Bbst_i \varphis_i(\Pbs_{1:\numSys}) \Abs_i - (\Bbs_i+\Delta_{\Bbs_i})^\T(\varphis_i(\Pbs_{1:\numSys})\Abs_i+\varphis_i(\Delta_{\Pbs_{1:\numSys}})\Abs_i + \varphi_i(\Pbs_{1:\numSys})\Delta_{\Abs_i}+\varphis_i(\Delta_{\Pbs_{1:\numSys}})\Delta_{\Abs_i})},\nn \\
		&\quad\quad= \|\Bbst_i \varphis_i(\Pbs_{1:\numSys}) \Abs_i - (\Bbst_i\varphis_i(\Pbs_{1:\numSys})\Abs_i +\Bbst_i\varphis_i(\Delta_{\Pbs_{1:\numSys}})\Abs_i + \Bbst_i\varphis_i(\Pbs_{1:\numSys})\Delta_{\Abs_i} +\Bbst_i\varphis_i(\Delta_{\Pbs_{1:\numSys}})\Delta_{\Abs_i} \nn \\
		&\quad\quad+\Delta_{\Bbs_i}^\T\varphis_i(\Pbs_{1:\numSys})\Abs_i
		+\Delta_{\Bbs_i}^\T\varphis_i(\Delta_{\Pbs_{1:\numSys}})\Abs_i + \Delta_{\Bbs_i}^\T\varphis_i(\Pbs_{1:\numSys})\Delta_{\Abs_i}
		+\Delta_{\Bbs_i}^\T\varphis_i(\Delta_{\Pbs_{1:\numSys}})\Delta_{\Abs_i})\|,\nn \\
		&\quad\quad\leq \|\Abs_i\|\|\Bbs_i\|f(\epsilon,\eta)  +\|\Bbs_i\|\|\varphis_i(\Pbs_{1:\numSys})\|\epsilon + \norm{\Bbs_i}f(\epsilon,\eta)\epsilon + \|\Abs_i\|\|\varphis_i(\Pbs_{1:\numSys})\|\epsilon + \|\Abs_i\|f(\epsilon,\eta)\epsilon \nn \\
		&\quad\quad+ \|\varphis_i(\Pbs_{1:\numSys})\|\epsilon^2 + f(\epsilon,\eta)\epsilon^2, \nn \\
		&\quad\quad \leq (\|\Abs_i\|\|\Bbs_i\| + \|\Abs_i\|(\max_{i \in [\numSys]}\|\Pbs_i\|) + \|\Bbs_i\|(\max_{i \in [\numSys]}\|\Pbs_i\|)+\|\Abs_i\|\epsilon + \norm{\Bbs_i}\epsilon  + \max_{i \in [\numSys]}\|\Pbs_i\|\epsilon + \epsilon^2)f(\epsilon,\eta), \nn \\
		&\quad\quad \leq 3 \Gamma_\star^2f(\epsilon,\eta), \label{eqn:Lambda_2i}
	\end{align}
	where the second inequality follows from the assumption that  $ f(\epsilon,\eta) \geq \max\{\epsilon,\eta\}$. To proceed, we use the matrix identity $\mtx{X}^{-1} - \mtx{Y}^{-1} = \mtx{X}^{-1}(\mtx{Y} - \mtx{X})\mtx{Y}^{-1}$, to get the following norm bound,
	\begin{align}
		&\norm{\big(\Rb_i + \Bbst_i \varphis_i(\Pbs_{1:\numSys}) \Bbs_i\big)^{-1} - \big(\Rb_i + \vBhat_i^\T \varphis_i(\vPhat_{1:\numSys}) \vBhat_i\big)^{-1}}, \nn \\
		&\quad\quad = \norm{\big(\Rb_i + \Bbst_i \varphis_i(\Pbs_{1:\numSys}) \Bbs_i\big)^{-1}\big(\vBhat_i^\T \varphis_i(\vPhat_{1:\numSys}) \vBhat_i-\Bbst_i \varphis_i(\Pbs_{1:\numSys}) \Bbs_i\big)\big(\Rb_i + \vBhat_i^\T \varphis_i(\vPhat_{1:\numSys}) \vBhat_i\big)^{-1}}, \nn \\
		&\quad\quad\leq \norm{\big(\Rb_i + \Bbst_i \varphis_i(\Pbs_{1:\numSys}) \Bbs_i\big)^{-1}}\norm{\big(\Rb_i + \vBhat_i^\T \varphis_i(\vPhat_{1:\numSys}) \vBhat_i\big)^{-1}}\norm{\vBhat_i^\T \varphis_i(\vPhat_{1:\numSys}) \vBhat_i-\Bbst_i \varphis_i(\Pbs_{1:\numSys}) \Bbs_i}\nn \\
		&\quad\quad\leq \frac{1}{\underline{\sigma}(\Rb_i)^2}(\|\Bbs_i\|^2 + \|\Bbs_i\|(\max_{i \in [\numSys]}\|\Pbs_i\|) + \|\Bbs_i\|(\max_{i \in [\numSys]}\|\Pbs_i\|)+\|\Bbs_i\|\epsilon + \norm{\Bbs_i}\epsilon  + \max_{i \in [\numSys]}\|\Pbs_i\|\epsilon + \epsilon^2)f(\epsilon,\eta),\nn \\
		&\quad\quad \leq \frac{3\Gamma_\star^2f(\epsilon,\eta)}{\underline{\sigma}(\Rb_i)^2}. \label{eqn:Lambda_3i}
	\end{align}
	
	Using a similar idea, we get 
	\begin{align}
		\norm{\vBhat_i^\T \varphis_i(\vPhat_{1:\numSys}) \vAhat_i}&= \norm{(\Bbs_i+\Delta_{\Bbs_i})^\T (\varphis_i(\Pbs_{1:\numSys})+\varphis_i(\Delta_{\Pbs_{1:\numSys}})) (\Abs_i+\Delta_{\Abs_i})}, \nn \\
		&= \|\Bbst_i\varphis_i(\Pbs_{1:\numSys})\Abs_i +\Bbst_i\varphis_i(\Delta_{\Pbs_{1:\numSys}})\Abs_i + \Bbst_i\varphis_i(\Pbs_{1:\numSys})\Delta_{\Abs_i} +\Bbst_i\varphis_i(\Delta_{\Pbs_{1:\numSys}})\Delta_{\Abs_i} \nn \\
		&+\Delta_{\Bbs_i}^\T\varphis_i(\Pbs_{1:\numSys})\Abs_i
		+\Delta_{\Bbs_i}^\T\varphis_i(\Delta_{\Pbs_{1:\numSys}})\Abs_i + \Delta_{\Bbs_i}^\T\varphis_i(\Pbs_{1:\numSys})\Delta_{\Abs_i}
		+\Delta_{\Bbs_i}^\T\varphis_i(\Delta_{\Pbs_{1:\numSys}})\Delta_{\Abs_i}\|, \nn \\
		& \leq (\|\Abs_i\|\|\Bbs_i\| + \|\Abs_i\|(\max_{i \in [\numSys]}\|\Pbs_i\|) + \|\Bbs_i\|(\max_{i \in [\numSys]}\|\Pbs_i\|)+\|\Abs_i\|\epsilon + \norm{\Bbs_i}\epsilon  + \max_{i \in [\numSys]}\|\Pbs_i\|\epsilon + \epsilon^2) \nn \\
		&\quad\; f(\epsilon,\eta) + \norm{\Abs_i}\norm{\Bbs_i}(\max_{i \in [\numSys]}\norm{\Pbs_i}), \nn \\
		&\leq 3\Gamma_\star^2f(\epsilon,\eta) + \Gamma_\star^3. \label{eqn:Lambda_4i}
	\end{align}
	
	Substituting \eqref{eqn:Lambda_1i}, \eqref{eqn:Lambda_2i}, \eqref{eqn:Lambda_3i}, and \eqref{eqn:Lambda_4i} into \eqref{eqn:split KKtilde}, we obtain
	\begin{align}
		\norm{\Kbs_i - \tilde{\Kb}_i} \leq 12 \Gamma_\star^2\frac{(\underline{\sigma}(\Rb_i)+\Gamma_\star^3)}{\underline{\sigma}(\Rb_i)^2}f(\epsilon,\eta), \quad \text{for all} \quad i \in [s].\label{eqn:KKtilde}
	\end{align}
	This gives the final bound on $\norm{\Kbs_i - \tilde{\Kb}_i}$. Using similar proof techniques, we can also bound $\norm{\tilde{\Kb}_i  - \vKhat_i}$ as follows,
	\begin{align}
		\norm{\tilde{\Kb}_i  - \vKhat_i} &= \norm{\big(\Rb_i + \vBhat_i^\T \varphis_i(\vPhat_{1:\numSys}) \vBhat_i\big)^{-1} \vBhat_i^\T \varphis_i(\vPhat_{1:\numSys}) \vAhat_i - \big(\Rb_i + \vBhat_i^\T \varphihat_i(\vPhat_{1:\numSys}) \vBhat_i\big)^{-1} \vBhat_i^\T \varphihat_i(\vPhat_{1:\numSys}) \vAhat_i}, \nn \\
		&\leq \norm{\big(\Rb_i + \vBhat_i^\T \varphis_i(\vPhat_{1:\numSys}) \vBhat_i\big)^{-1}}\norm{\vBhat_i^\T \varphis_i(\vPhat_{1:\numSys}) \vAhat_i - \vBhat_i^\T \varphihat_i(\vPhat_{1:\numSys}) \vAhat_i} \nn \\
		&+\norm{\big(\Rb_i + \vBhat_i^\T \varphis_i(\vPhat_{1:\numSys}) \vBhat_i\big)^{-1}-\big(\Rb_i + \vBhat_i^\T \varphihat_i(\vPhat_{1:\numSys}) \vBhat_i\big)^{-1}}\norm{\vBhat_i^\T \varphihat_i(\vPhat_{1:\numSys}) \vAhat_i}.\label{eqn:split KtildeKhat}
	\end{align}
	In the following, we will bound each norm in the above expression separately to get a bound on $\norm{\tilde{\Kb}_i - \vKhat_i}$. First, we have
	\begin{align}
		\norm{\big(\Rb_i + \vBhat_i^\T \varphis_i(\vPhat_{1:\numSys}) \vBhat_i\big)^{-1}} = \frac{1}{\underline{\sigma}(\Rb_i + \vBhat_i^\T \varphis_i(\vPhat_{1:\numSys}) \vBhat_i)} \leq \frac{1}{\underline{\sigma}(\Rb_i)}. \label{eqn:Lambda_5i}	
	\end{align}
	Next, we bound the difference,
	\begin{align}
		&\norm{\vBhat_i^\T \varphis_i(\vPhat_{1:\numSys}) \vAhat_i - \vBhat_i^\T \varphihat_i(\vPhat_{1:\numSys}) \vAhat_i}, \nn \\
		&\quad\quad\leq \norm{\vBhat_i} \norm{\varphis_i(\vPhat_{1:\numSys}) - \varphihat_i(\vPhat_{1:\numSys})}\norm{\vAhat_i}, \nn \\
		&\quad\quad \leq \norm{\Abs_i+\Delta_{\Abs_i}}\norm{\Bbs_i+\Delta_{\Bbs_i}}\norm{\sum_{j \in [\numSys]} ([\vT^\star]_{ij}-[\vThat]_{ij}) \vPhat_j} ,\nn \\
		&\quad\quad\leq (\norm{\Abs_i} + \epsilon)(\norm{\Bbs_i} + \epsilon)(\eta\max_{i \in [\numSys]}\norm{\Pbs_i + \Delta_{\Pbs_i}}), \nn \\
		&\quad\quad\leq \eta(\norm{\Abs_i}\norm{\Bbs_i}+ \norm{\Abs_i}\epsilon+\norm{\Bbs_i}\epsilon + \epsilon^2)(\max_{i \in [\numSys]}\norm{\Pbs_i}+f(\epsilon,\eta)), \nn \\
		&\quad\quad\leq \eta\bigg((\|\Abs_i\|\|\Bbs_i\| + \|\Abs_i\|(\max_{i \in [\numSys]}\|\Pbs_i\|) + \|\Bbs_i\|(\max_{i \in [s]}\|\Pbs_i\|)+\|\Abs_i\|\epsilon + \norm{\Bbs_i}\epsilon  + \max_{i \in [\numSys]}\|\Pbs_i\|\epsilon + \epsilon^2)f(\epsilon,\eta) \nn \\
		& \quad\quad+ \norm{\Abs_i}\norm{\Bbs_i}(\max_{i \in [\numSys]}\norm{\Pbs_i})\bigg), \nn \\
		&\quad\quad \leq \eta(3\Gamma_\star^2f(\epsilon,\eta) + \Gamma_\star^3). \label{eqn:Lambda_6i}
	\end{align}
	To proceed, using matrix identity $\mtx{X}^{-1} - \mtx{Y}^{-1} = \mtx{X}^{-1}(\mtx{Y} - \mtx{X})\mtx{Y}^{-1}$, we get the following norm bound,
	\begin{align}
		&\norm{\big(\Rb_i + \vBhat_i^\T \varphis_i(\vPhat_{1:\numSys}) \vBhat_i\big)^{-1}-\big(\Rb_i + \vBhat_i^\T \varphihat_i(\vPhat_{1:\numSys}) \vBhat_i\big)^{-1}}, \nn \\
		&\quad\quad= \norm{\big(\Rb_i + \vBhat_i^\T \varphis_i(\vPhat_{1:\numSys}) \vBhat_i\big)^{-1}\big(\vBhat_i^\T \varphihat_i(\vPhat_{1:\numSys})\vBhat_i - \vBhat_i^\T \varphis_i(\vPhat_{1:\numSys})\vBhat_i\big)\big(\Rb_i + \vBhat_i^\T \varphihat_i(\vPhat_{1:\numSys}) \vBhat_i\big)^{-1}}, \nn \\
		&\quad\quad \leq \norm{\big(\Rb_i + \vBhat_i^\T \varphis_i(\vPhat_{1:\numSys}) \vBhat_i\big)^{-1}}\norm{\big(\Rb_i + \vBhat_i^\T \varphihat_i(\vPhat_{1:\numSys}) \vBhat_i\big)^{-1}}\norm{\vBhat_i^\T \varphihat_i(\vPhat_{1:\numSys})\vBhat_i - \vBhat_i^\T \varphis_i(\vPhat_{1:\numSys})\vBhat_i}, \nn \\
		&\quad\quad\leq \frac{\eta}{\underline{\sigma}(\Rb_i)^2}\bigg((\|\Bbs_i\|^2 + \|\Bbs_i\|(\max_{i \in [\numSys]}\|\Pbs_i\|) + \|\Bbs_i\|(\max_{i \in [\numSys]}\|\Pbs_i\|)+\|\Bbs_i\|\epsilon + \norm{\Bbs_i}\epsilon  + \max_{i \in [\numSys]}\|\Pbs_i\|\epsilon + \epsilon^2)f(\epsilon,\eta) \nn \\
		&\quad\quad + \norm{\Bbs_i}^2(\max_{i \in [\numSys]}\norm{\Pbs_i})\bigg), \nn \\
		&\quad\quad \leq \frac{\eta(3\Gamma_\star^2f(\epsilon,\eta) + \Gamma_\star^3)}{\underline{\sigma}(\Rb_i)^2}. \label{eqn:Lambda_7i}
	\end{align}
	Using similar idea, we get the following norm bound, 
	\begin{align}
		\norm{\vBhat_i^\T \varphihat_i(\vPhat_{1:\numSys}) \vAhat_i} &= \norm{(\Bbs_i+\Delta_{\Bbs_i})^\T (\varphihat_i(\Pbs_{1:\numSys})+\varphihat_i(\Delta_{\Pbs_{1:\numSys}})) (\Abs_i+\Delta_{\Abs_i})}, \nn \\
		&= \|\Bbst_i\varphihat_i(\Pbs_{1:\numSys})\Abs_i +\Bbst_i\varphihat_i(\Delta_{\Pbs_{1:\numSys}})\Abs_i + \Bbst_i\varphihat_i(\Pbs_{1:\numSys})\Delta_{\Abs_i} +\Bbst_i\varphihat_i(\Delta_{\Pbs_{1:\numSys}})\Delta_{\Abs_i} \nn \\
		&+\Delta_{\Bbs_i}^\T\varphihat_i(\Pbs_{1:\numSys})\Abs_i
		+\Delta_{\Bbs_i}^\T\varphihat_i(\Delta_{\Pbs_{1:\numSys}})\Abs_i + \Delta_{\Bbs_i}^\T\varphihat_i(\Pbs_{1:\numSys})\Delta_{\Abs_i}
		+\Delta_{\Bbs_i}^\T\varphihat_i(\Delta_{\Pbs_{1:\numSys}})\Delta_{\Abs_i}\|, \nn \\
		& \leq (\|\Abs_i\|\|\Bbs_i\| + \|\Abs_i\|(\max_{i \in [\numSys]}\|\Pbs_i\|) + \|\Bbs_i\|(\max_{i \in [\numSys]}\|\Pbs_i\|)+\|\Abs_i\|\epsilon + \norm{\Bbs_i}\epsilon  + \max_{i \in [\numSys]}\|\Pbs_i\|\epsilon + \epsilon^2) \nn \\
		& \quad \; f(\epsilon,\eta)+ \norm{\Abs_i}\norm{\Bbs_i}(\max_{i \in [\numSys]}\norm{\Pbs_i}), \nn \\
		& \leq 3\Gamma_\star^2f(\epsilon,\eta) + \Gamma_\star^3. \label{eqn:Lambda_8i} 
	\end{align}
	
	Substituting \eqref{eqn:Lambda_5i}, \eqref{eqn:Lambda_6i}, \eqref{eqn:Lambda_7i} and \eqref{eqn:Lambda_8i} into \eqref{eqn:split KtildeKhat}, we get the following norm bound,
	\begin{align}
		\norm{\tilde{\Kb}_i- \vKhat_i} &\leq 16 \Gamma_\star^3 \frac{(\underline{\sigma}(\Rb_i)+\Gamma_\star^3)}{\underline{\sigma}(\Rb_i)^2}\eta.\label{eqn:KtildeKhat}
	\end{align}
	This gives the final bound on $\norm{\tilde{\Kb}_i- \vKhat_i}$. Finally, we combine the two norm bounds \eqref{eqn:KKtilde} and \eqref{eqn:KtildeKhat}, to obtain the statement of the theorem,
	\begin{align}
		\norm{\Kbs_i - \vKhat_i} &\leq \norm{\Kbs_i - \tilde{\Kb}_i} + \norm{\tilde{\Kb}_i  - \vKhat_i}, \nn \\ 
		&\leq 12 \Gamma_\star^2\frac{(\underline{\sigma}(\Rb_i)+\Gamma_\star^3)}{\underline{\sigma}(\Rb_i)^2}f(\epsilon,\eta) + 16 \Gamma_\star^3\frac{(\underline{\sigma}(\Rb_i)+\Gamma_\star^3)}{\underline{\sigma}(\Rb_i)^2}\eta, \nn \\
		&\leq 28 \Gamma_\star^3\frac{(\underline{\sigma}(\Rb_i)+\Gamma_\star^3)}{\underline{\sigma}(\Rb_i)^2}f(\epsilon,\eta),
	\end{align}
	for all $i \in [s]$. This completes the proof.
\end{proof}
\begin{lemma}[Suboptimality gap]\label{thrm:K to J2}
	Let $\epsilon,\eta >0$ be fixed scalars. Suppose $\norm{\vThat - \vT^\star}_\infty \leq \eta$, $\norm{\vAhat_i - \Abs_i} \leq \epsilon$, $\norm{\vBhat_i - \Bbs_i} \leq \epsilon$ and $\norm{\vPhat_i - \Pbs_i} \leq f(\epsilon,\eta)$ for all $i \in [s]$ and for some function $f$ such that $\max\{\epsilon,\eta\}\leq f(\epsilon,\eta)\leq \Gamma_\star$. Assume that the Markov chain $\{\omega(t)\}_{t=0}^\infty$ is ergodic and let $\gamma>0$ such that $\rho(\tilde{\Lb}^\star) \leq \gamma < 1$. Then, under Assumption~\ref{asmp_1}, as long as $f(\epsilon,\eta) \lesssim \frac{(1-\gamma)\underline{\sigma}(\Rbs_i)^2}{180s \Gamma_\star^6(\underline{\sigma}(\Rbs_i)+\Gamma_\star^3)\tau(\tilde{\Lb}^\star,\gamma)}$, the certainty equivalent controller $\ub_{t} = \vKhat_{\omega(t)}\xb_{t}$ achieves
	\[
	\Jhat - J^\star \lesssim \sigma_w^2s\min\{n,p\}(\norm{\Rb_{1:s}}+\Gamma_\star^3) \Gamma_\star^6 \frac{\tau(\vLtil^\star,\gamma)(\underline{\sigma}(\Rb_{1:s})+\Gamma_\star^3)^2}{(1-\gamma)\underline{\sigma}(\Rb_{1:s})^4}f(\epsilon,\eta)^2.
	\]
\end{lemma}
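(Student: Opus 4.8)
The plan is to reduce the suboptimality gap to a coupled \emph{Lyapunov} comparison, bound the resolvent of the certainty-equivalent closed loop via Fact~\ref{fact:rho perturbation}, and then plug in the controller-mismatch bound of Lemma~\ref{thrm:P to K}. First we would record the cost formula for a fixed stabilizing controller: if $\vK_{1:s}$ is mean-square stabilizing for the true MJS $(\Abs_{1:s},\Bbs_{1:s},\Tbs)$, then running $\ub_t=\vK_{\omega(t)}\xb_t$ incurs (by ergodicity of $\{\omega(t)\}$ together with $\xb_0\sim\Nc(0,\vI)$, $\vw_t\sim\Nc(0,\sigma_w^2\vI)$, exactly as the optimal-cost formula of Lemma~\ref{lemma_vanillaLQRResult} is derived in \cite{costa2006discrete}) a cost $\sigma_w^2\sum_{i=1}^s\vpi_{\Tbs}(i)\tr(\vP^{\vK}_i)$, where $\vP^{\vK}_{1:s}\succeq0$ is the unique solution of the coupled Lyapunov equation $\vP^{\vK}_i=(\Abs_i+\Bbs_i\vK_i)^\T\varphis_i(\vP^{\vK}_{1:s})(\Abs_i+\Bbs_i\vK_i)+\Qb_i+\vK_i^\T\Rb_i\vK_i$. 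Specializing to $\Kbs_{1:s}$ returns $\Pbs_{1:s}$, since the cDARE~\eqref{eq_CARE} rewrites in closed-loop form $\Pbs_i=(\Lbs_i)^\T\varphis_i(\Pbs_{1:s})\Lbs_i+\Qb_i+(\Kbs_i)^\T\Rb_i\Kbs_i$ (via the matrix-inversion-lemma steps used in the proof of Theorem~\ref{prop_riccatiPertb}), which is precisely the Lyapunov equation defining $\vP^{\Kbs}_{1:s}$; specializing to $\vKhat_{1:s}$ gives some $\vPhat^{\mathrm{cl}}_{1:s}$ once $\vKhat_{1:s}$ is known to be stabilizing. Hence $\Jhat-J^\star=\sigma_w^2\sum_{i=1}^s\vpi_{\Tbs}(i)\tr(\vPhat^{\mathrm{cl}}_i-\Pbs_i)$.

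The algebraic heart is a completion-of-squares identity for the gap $\vDelta_i:=\vPhat^{\mathrm{cl}}_i-\Pbs_i$. Subtracting the closed-loop cDARE for $\Pbs_{1:s}$ from the Lyapunov equation for $\vPhat^{\mathrm{cl}}_{1:s}$, writing $\vLhat_i:=\Abs_i+\Bbs_i\vKhat_i=\Lbs_i+\Bbs_i(\vKhat_i-\Kbs_i)$, and regrouping terms as in the derivation of $F(\Pbs+\vX;\cdot)=\Tcal(\vX)+\Hcal(\vX)$ in the proof of Theorem~\ref{prop_riccatiPertb}, one would obtain
\[
\vDelta_i=\vLhat_i^\T\varphis_i(\vDelta_{1:s})\vLhat_i+(\vKhat_i-\Kbs_i)^\T\vG_i(\vKhat_i-\Kbs_i),\qquad \vG_i:=\Rb_i+\Bbst_i\varphis_i(\Pbs_{1:s})\Bbs_i\succ0.
\]
In the coordinates of Fact~\ref{fact:tilde_vek} this reads $\tilde{\vek}(\vDelta)=(\vI-\vLtil_{\mathrm{cl}})^{-1}\tilde{\vek}(\vE)$, where $\vE:=\diag(\vE_{1:s})$ with $\vE_i:=(\vKhat_i-\Kbs_i)^\T\vG_i(\vKhat_i-\Kbs_i)\succeq0$, and $\vLtil_{\mathrm{cl}}$ is the augmented matrix built from $\vLhat_{1:s}$ and $\Tbs$ as in~\eqref{def_augmentedL}. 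In particular every $\vDelta_i\succeq0$, consistent with optimality of $\Kbs_{1:s}$.

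It then remains to (i) establish stability and a resolvent bound and (ii) assemble. For (i), since $\|\vLhat_i-\Lbs_i\|\le\Gamma_\star\|\vKhat_i-\Kbs_i\|$, the Kronecker identity $\vM\otimes\vM-\vN\otimes\vN=(\vM-\vN)\otimes\vM+\vN\otimes(\vM-\vN)$ applied blockwise gives $\|\vLtil_{\mathrm{cl}}-\vLtil^\star\|\lesssim s\,\Gamma_\star^2\max_i\|\vKhat_i-\Kbs_i\|$, and Lemma~\ref{thrm:P to K} controls $\max_i\|\vKhat_i-\Kbs_i\|$ by $\lesssim\Gamma_\star^3\frac{\underline{\sigma}(\Rb_i)+\Gamma_\star^3}{\underline{\sigma}(\Rb_i)^2}f(\epsilon,\eta)$, so the hypothesis on $f(\epsilon,\eta)$ is exactly what makes $\tau(\vLtil^\star,\gamma)\|\vLtil_{\mathrm{cl}}-\vLtil^\star\|\le\tfrac{1-\gamma}{2}$. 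Then Fact~\ref{fact:rho perturbation} (with $\Mb=\vLtil^\star$, $\rho(\vLtil^\star)\le\gamma$) yields $\|\vLtil_{\mathrm{cl}}^k\|\le\tau(\vLtil^\star,\gamma)\bigl(\tfrac{1+\gamma}{2}\bigr)^k$ for all $k$, hence $\rho(\vLtil_{\mathrm{cl}})<1$ — so $\vKhat_{1:s}$ is mean-square stabilizing, which legitimizes the use of $\vPhat^{\mathrm{cl}}_{1:s}$ — and $\|(\vI-\vLtil_{\mathrm{cl}})^{-1}\|\le\sum_{k\ge0}\tau(\vLtil^\star,\gamma)\bigl(\tfrac{1+\gamma}{2}\bigr)^k=\tfrac{2\tau(\vLtil^\star,\gamma)}{1-\gamma}$. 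For (ii), from $\Jhat-J^\star=\sigma_w^2\sum_i\vpi_{\Tbs}(i)\tr(\vDelta_i)$, use $\tr(\vDelta_i)=\langle\vek(\vI),\vek(\vDelta_i)\rangle\le\sqrt{n}\,\tf{\vDelta_i}$ together with Cauchy--Schwarz over $i$ (and $\sum_i\vpi_{\Tbs}(i)^2\le1$) and $\|\tilde{\vek}^{-1}\|=1$ to get $\Jhat-J^\star\le\sigma_w^2\sqrt{n}\,\|(\vI-\vLtil_{\mathrm{cl}})^{-1}\|\,\|\tilde{\vek}(\vE)\|$; bound $\|\tilde{\vek}(\vE)\|\le\sqrt{s}\max_i\tf{\vE_i}\le\sqrt{s\min\{n,p\}}\max_i\|\vE_i\|$ using $\rank\vE_i\le\min\{n,p\}$ (since $\vKhat_i-\Kbs_i\in\R^{p\times n}$), and $\|\vE_i\|\le\|\vG_i\|\,\|\vKhat_i-\Kbs_i\|^2\le(\|\Rb_{1:s}\|+\Gamma_\star^3)\|\vKhat_i-\Kbs_i\|^2$ using $\|\varphis_i(\Pbs_{1:s})\|\le\Gamma_\star$ (convex combination). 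Substituting the resolvent bound and Lemma~\ref{thrm:P to K}, and folding all numerical and dimensional constants into $\lesssim$, produces the claimed estimate with the square factor $f(\epsilon,\eta)^2$.

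The main obstacle would be the completion-of-squares step: unlike single-mode LQR, the cancellations that collapse the $\Pbs$-versus-$\vPhat^{\mathrm{cl}}$ difference into the clean quadratic forcing $(\vKhat_i-\Kbs_i)^\T\vG_i(\vKhat_i-\Kbs_i)$ have to propagate through the mode-coupling operator $\varphis$, and one must work from the closed-loop form of the cDARE rather than its raw form — essentially the same kind of careful identity manipulation already carried out in the proof of Theorem~\ref{prop_riccatiPertb}, so tractable but delicate. A secondary technical point is making the blockwise estimate $\|\vLtil_{\mathrm{cl}}-\vLtil^\star\|\lesssim s\,\Gamma_\star^2\max_i\|\vKhat_i-\Kbs_i\|$ (and the dimensional bookkeeping in step (ii)) tight enough in $s$, $n$, $p$, and $\Gamma_\star$ to reproduce the exact polynomial factors in the statement.
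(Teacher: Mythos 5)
Your skeleton is essentially the paper's: reduce $\Jhat-J^\star$ to a noiseless policy-difference quantity via ergodicity and the average-cost formula, prove mean-square stability of $\vKhat_{1:s}$ by combining Fact~\ref{fact:rho perturbation} with the controller-mismatch bound of Lemma~\ref{thrm:P to K} and the smallness condition on $f(\epsilon,\eta)$, and then bound a term quadratic in $\vKhat_i-\Kbs_i$. The one structural difference is the middle step: the paper invokes Lemma 3 of \cite{jansch2020policy}, which writes the noiseless gap as $\sum_i\tr\big(\bSi_i^{\vKhat}(\Kbs_i-\vKhat_i)^\T(\Rb_i+\Bbst_i\varphis_i(\Pb^{\Kbs}_{1:s})\Bbs_i)(\Kbs_i-\vKhat_i)\big)$ and then bounds $\|\bSi^{\vKhat}\|\le 2\tau(\vLtil^\star,\gamma)/(1-\gamma)$, whereas you re-derive the equivalent identity in value-function form, $\vDelta_i=\vLhat_i^\T\varphis_i(\vDelta_{1:s})\vLhat_i+(\vKhat_i-\Kbs_i)^\T\vG_i(\vKhat_i-\Kbs_i)$, and invert $\vI-\vLtil_{\mathrm{cl}}$. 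That identity is correct (the completion of squares survives the mode coupling because $\Kbs_i$ minimizes $\vK\mapsto(\Abs_i+\Bbs_i\vK)^\T\varphis_i(\Pbs_{1:s})(\Abs_i+\Bbs_i\vK)+\vK^\T\Rb_i\vK$), so your route supplies a self-contained derivation of what the paper imports by citation.

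The genuine issue is in your assembly step (ii): chaining $\tr(\vDelta_i)\le\sqrt{n}\,\tf{\vDelta_i}$, the spectral-norm resolvent bound, and $\|\tilde{\vek}(\vE)\|\le\sqrt{s\min\{n,p\}}\max_i\|\vE_i\|$ produces an overall dimensional factor $\sqrt{n\,s\min\{n,p\}}$, which is \emph{not} always $\lesssim s\min\{n,p\}$; when $p<n$ and $sp<n$ (few modes and inputs, many states) your bound is strictly weaker than the lemma's, so as written the sketch does not establish the stated inequality. The spurious $\sqrt{n}$ comes from converting the trace of $\vDelta_i$ into a Frobenius norm before applying the resolvent. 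The fix is to evaluate the gap in the adjoint (covariance) form instead: $\sum_i\vpi_{\Tbs}(i)\tr(\vDelta_i)=\sum_i\tr(\bSi_i^{\vKhat}\vE_i)\le\max_i\|\bSi_i^{\vKhat}\|\sum_i\tr(\vE_i)$, with $\tr(\vE_i)\le\min\{n,p\}\|\vE_i\|$ and $\|\bSi^{\vKhat}\|\le 2\tau(\vLtil^\star,\gamma)/(1-\gamma)$ obtained from the same geometric decay of the perturbed closed loop — which is precisely the paper's route and yields the stated $s\min\{n,p\}$ factor. A minor further slip: your blockwise estimate $\|\vLtil_{\mathrm{cl}}-\vLtil^\star\|\lesssim s\Gamma_\star^2\max_i\|\vKhat_i-\Kbs_i\|$ should carry $\Gamma_\star^3$ (since $\|\Lbs_i\|\lesssim\Gamma_\star^2$), but this is harmless because the assumed bound on $f(\epsilon,\eta)$ has $\Gamma_\star^6$ in the denominator, exactly as in the paper's stabilization argument.
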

\begin{proof}
	To prove this lemma, we need to quantify the suboptimality gap $\Jhat - J^\star$ in terms of the controller mismatch $\norm{\Kbs_i - \vKhat_i}$ and derive an upper bound on this mismatch so that the certainty equivalent controller $\vKhat_i$ stabilizes the MJS in the mean-square sense. For this purpose, recall that the goal of infinite time horizon LQR problem is to solve the following optimization problem,
	\begin{align}
		\inf_{\{\ub_0,\ub_1,\dots\}} \limsup_{T \to \infty}\expctn \bigg[\frac{1}{T}\sum_{t=0}^{T}\vx_t^\T\Qb_{\omega(t)}\vx_t + \ub_t^\T\Rb_{\omega(t)}\ub_t\bigg] \quad \text{s.t.} \quad \vx_{t+1} = \Abs_{\omega(t)}\vx_t + \Bbs_{\omega(t)}\ub_t + \vw_t,\label{eqn:MJS LQR}
	\end{align}
	where the expectation is taken over the initial state $\vx_0 \sim \Nc(0,\Iden_n)$, Markovian modes $\{\omega(t)\}_{t = 0}^{\infty}$, and the i.i.d. noise $\{\vw_t\}_{t = 0}^{\infty} \distas \Nc(0,\sigma_w^2\Iden_n)$. If the input is given by $\ub_t = \Kb_{\omega(t)}\xb_t$, then, setting $\Lb_{\omega(t)} = \Abs_{\omega(t)} + \Bbs_{\omega(t)}\Kb_{\omega(t)}$, the state updates as follows,
	\begin{align}
		\xb_{t+1} = \Lb_{\omega(t)}\xb_t + \wb_t \implies \xb_t = \begin{cases}
			\xb_0 &\text{if} \quad t=0, \\
			\Lb_{\omega(0)}\xb_0 + \wb_0 &\text{if} \quad t=1, \\
			\prod_{i=0}^{t-1}\Lb_{\omega(i)}\xb_0 + \sum_{\tau=0}^{t-2}\prod_{j=\tau+1}^{t-1}\Lb_{\omega(j)}\wb_\tau + \wb_{t-1} &\text{if}\quad t \geq 2
		\end{cases} \label{eqn:state update} 
	\end{align}
	where $\prod_{i=0}^{t-1}\Lb_{\omega(i)} = \Lb_{\omega(t-1)}\Lb_{\omega(t-2)} \cdots\Lb_{\omega(0)}$. For ease of notation, we set $\Cb_{\omega(t)}:=\Qb_{\omega(t)} + \Kb_{\omega(t)}^\T\Rb_{\omega(t)}\Kb_{\omega(t)}$. Then, using~\eqref{eqn:state update} and the independence of $\xb_0, \{\wb_t\}_{t=0}^\infty$ and $\{\omega(t)\}_{t=0}^\infty$, the MJS cost function~\eqref{eqn:MJS LQR} can be simplified as follows,
	\begin{align}
		J(\Abs_{1:s},\Bbs_{1:s},\Kb_{1:s}) &= \limsup_{T \to \infty}\expctn \bigg[\frac{1}{T}\sum_{t=0}^{T}\xb_t^\T(\Qb_{\omega(t)} + \Kb_{\omega(t)}^\T\Rb_{\omega(t)}\Kb_{\omega(t)})\xb_t\bigg], \nn \\
		&=\limsup_{T \to \infty} \frac{1}{T}\bigg[\E\big[\xb_0^\T\Cb_{\omega(0)}\xb_0\big]+\E\big[\xb_0^\T\Lb_{\omega(0)}^\T\Cb_{\omega(1)}\Lb_{\omega(0)}\xb_0\big] \nn \\
		&+\E\big[\wb_0^\T\Cb_{\omega(1)}\wb_0\big]+\sum_{t=2}^{T}\E\big[\xb_0^\T\big(\prod_{i=0}^{t-1}\Lb_{\omega(i)}\big)^\T\Cb_{\omega(t)}\big(\prod_{i=0}^{t-1}\Lb_{\omega(i)}\big)\xb_0\big] \nn \\
		&+\sum_{t=2}^{T}\sum_{\tau=0}^{t-2}\E\big[\wb_\tau^\T\big(\prod_{j=\tau+1}^{t-1}\Lb_{\omega(j)}\big)^\T\Cb_{\omega(t)}\big(\prod_{j=\tau+1}^{t-1}\Lb_{\omega(j)}\big)\wb_\tau\big]+\sum_{t=2}^{T}\E\big[\wb_{t-1}^\T\Cb_{\omega(t)}\wb_{t-1}\big]\bigg]. \nn
	\end{align}
	To proceed, we solve the expectations with respect to $\vx_0 \sim \Nc(0,\Iden_n)$ and $\{\vw_t\}_{t = 0}^{\infty} \distas \Nc(0,\sigma_w^2\Iden_n)$ while keeping the expectation with respect to the Markovian modes $\{\omega(t)\}_{t=0}^\infty$ and use the linearity of trace operator to obtain the following, 
	\begin{align}
		J(\Abs_{1:s},\Bbs_{1:s},\Kb_{1:s}) 
		&=\limsup_{T \to \infty} \frac{1}{T}\E\bigg[\tr\bigg(\Cb_{\omega(0)}+\Lb_{\omega(0)}^\T\Cb_{\omega(1)}\Lb_{\omega(0)}+\sum_{t=2}^{T}\big(\prod_{i=0}^{t-1}\Lb_{\omega(i)}\big)^\T\Cb_{\omega(t)}\big(\prod_{i=0}^{t-1}\Lb_{\omega(i)}\big)\bigg)\bigg] \nn \\
		&+ \sigma_w^2\limsup_{T \to \infty} \frac{1}{T}\E\bigg[\tr\bigg(\Cb_{\omega(1)}+\sum_{t=2}^{T}\sum_{\tau=0}^{t-2}\big(\prod_{j=\tau+1}^{t-1}\Lb_{\omega(j)}\big)^\T\Cb_{\omega(t)}\big(\prod_{j=\tau+1}^{t-1}\Lb_{\omega(j)}\big)+\sum_{t=2}^{T}\Cb_{\omega(t)}\bigg)\bigg], \nn \\
		&=\limsup_{T \to \infty} \frac{1}{T}\E\bigg[\tr\bigg(\Cb_{\omega(0)}+\sum_{t=1}^{T}\big(\prod_{i=0}^{t-1}\Lb_{\omega(i)}\big)^\T\Cb_{\omega(t)}\big(\prod_{i=0}^{t-1}\Lb_{\omega(i)}\big)\bigg)\bigg] \nn \\
		&+ \sigma_w^2 \limsup_{T \to \infty} \frac{1}{T}\E\bigg[\sum_{\tau=1}^{T}\tr\bigg(\Cb_{\omega(\tau)}+\sum_{t=\tau+1}^{T}\big(\prod_{i=\tau}^{t-1}\Lb_{\omega(i)}\big)^\T\Cb_{\omega(t)}\big(\prod_{i=\tau}^{t-1}\Lb_{\omega(i)}\big)\bigg)\bigg], \nn \\ 
		&=\limsup_{T \to \infty} \frac{1}{T}\E\bigg[\xb_0^\T\Cb_{\omega(0)}\xb_0+\xb_0^\T\sum_{t=1}^{T}\big(\prod_{i=0}^{t-1}\Lb_{\omega(i)}\big)^\T\Cb_{\omega(t)}\big(\prod_{i=0}^{t-1}\Lb_{\omega(i)}\big)\xb_0\bigg] \nn \\
		&+ \sigma_w^2 \limsup_{T \to \infty} \frac{1}{T}\E\bigg[\sum_{\tau=1}^{T}\xb_0^\T\Cb_{\omega(\tau)}\xb_0+\xb_0^\T\sum_{t=\tau+1}^{T}\big(\prod_{i=\tau}^{t-1}\Lb_{\omega(i)}\big)^\T\Cb_{\omega(t)}\big(\prod_{i=\tau}^{t-1}\Lb_{\omega(i)}\big)\xb_0\bigg], \label{eqn:limit_split}
	\end{align}
	where, the expectation is with respect to $\xb_{0} \sim \Nc(0,\Iden_n)$ and the Markovian modes $\{\omega(t)\}_{t=0}^\infty$. Let $\Mcal_{\text{init}}$ denotes the original Markov chain and $\Mcal_{\text{final}}$ denotes a new Markov chain with the same transition matrix $\Tbs$ but with the initial distribution as the stationary distribution of $\Mcal_{\text{init}}$. Observe that, for the Markov chain $\Mcal_{\text{final}}$, we have $\omega(0) \sim \omega(1) \sim \cdots \sim \vpi_{\Tbs}$. Using these, we define the following two cost functions for noiseless Markov jump systems,
	\begin{align}
		\Jbar_{\Mcal_{\text{init}}}(\Abs_{1:s},\Bbs_{1:s},\Kb_{1:s}):&=	\E_{\xbb_0\sim\Nc(0,\Iden_n),\{\omega_t\}_{t=0}^\infty \sim \Mcal_{\text{init}}}\bigg[\sum_{t=0}^{\infty}\xbb_t^\T(\Qb_{\omega(t)} + \Kb_{\omega(t)}^\T\Rb_{\omega(t)}\Kb_{\omega(t)})\xbb_t\bigg],  \\
		\Jbar_{\Mcal_{\text{final}}}(\Abs_{1:s},\Bbs_{1:s},\Kb_{1:s}):&=	\E_{\xbb_0\sim\Nc(0,\Iden_n),\{\omega_t\}_{t=0}^\infty \sim \Mcal_{\text{final}}}\bigg[\sum_{t=0}^{\infty}\xbb_t^\T(\Qb_{\omega(t)} + \Kb_{\omega(t)}^\T\Rb_{\omega(t)}\Kb_{\omega(t)})\xbb_t\bigg],
	\end{align}
	subject to $\xbb_{t+1} = (\Abs_{\omega(t)} + \Bbs_{\omega(t)}\Kb_{\omega(t)})\xbb_t$. To proceed, using the assumption that the original Markov chain is ergodic, in the limit when $T \to \infty$, equation~\eqref{eqn:limit_split} becomes
	\begin{align}
		J(\Abs_{1:s},\Bbs_{1:s},\Kb_{1:s})  = \limsup_{T \to \infty}\frac{1}{T}\Jbar_{\Mcal_{\text{init}}}(\Abs_{1:s},\Bbs_{1:s},\Kb_{1:s}) + \sigma_w^2 \Jbar_{\Mcal_{\text{final}}}(\Abs_{1:s},\Bbs_{1:s},\Kb_{1:s}).
	\end{align}
	Recall that, by assumption we have $\rho(\tilde{\Lb}^\star)\leq \gamma <1$ for some $\gamma >0$. From Proposition~\ref{prop MSS}, this is equivalent to saying that the optimal controller $\Kbs_{1:s}$ stabilizes the noiseless MJS in the mean-square sense. If additionally, the certainty equivalent controller $\vKhat_{1:s}$ stabilizes the noiseless MJS in the mean-square sense, the problem of finding the suboptimality gap for the LQR problem~\eqref{eqn:MJS LQR} reduces to finding the suboptimaly gap for a corresponding noiseless problem as follows,
	\begin{align}
		\Jhat - J^\star = \sigma_w^2\big(\Jbar_{\Mcal_{\text{final}}}(\Abs_{1:s},\Bbs_{1:s},\vKhat_{1:s})-\Jbar_{\Mcal_{\text{final}}}(\Abs_{1:s},\Bbs_{1:s},\Kbs_{1:s})\big). \label{eqn:sub-opt reduction} 
	\end{align}
	Towards the end of this proof, we will show that when $\Kbs_{1:s}$ stabilizes the noiseless MJS in the mean-square sense and the Riccati perturbations $f(\epsilon,\eta)$ are sufficiently small, then $\vKhat_{1:s}$ also stabilizes the noiseless MJS in the mean-square sense. Before that, we introduce a few more concepts and definitions that will be used in the remaining proof.
	\begin{definition*}
		Given a noiseless closed loop MJS, $\xbb_{t+1} = (\Abs_{\omega(t)} + \Bbs_{\omega(t)}\Kb_{\omega(t)})\xbb_{t}$, we define the following quantities:
		
		\noindent {\bfseries (a)} We denote by $\bSi[\xbb_t]=\sum_{i \in [\numSys]}\bSi_i[\xbb_t]$ the covariance matrix of the state $\xbb_t$, where $\bSi_i[\xbb_t] := \E[\xbb_t\xbb_t^\T\indicator_{\omega(t)=i}].$
		
		\noindent {\bfseries (b)} When $\Kb_{1:\numSys}$ stabilizes the noiseless MJS in the mean-square sense, we know $\sum_{t=0}^\infty\bSi[\xbb_t]$ exists. We denote this limit as $\bSi^{\Kb}$ and we have $\bSi^{\Kb} = \sum_{t=0}^{\infty}\Tc^t(\bSi[\xbb_0])$, where $\Tc(\Vb_{1:s}) := (\Tc_1(\Vb_{1:s}),\dots,\Tc_s(\Vb_{1:s}))$ and $\Tc_j(\Vb_{1:s}) = \sum_{i=1}^s [\vT^\star]_{ij}(\Abs_i + \Bbs_i \Kb_i)^\T\Vb_i(\Abs_i + \Bbs_i\Kb_i)$. 
		
		{\bfseries (c)} We denote by $\{\Pb_i^{\Kbs}\}_{i=1}^{\numSys}$ the unique positive definite solution of the following coupled Lyapunov equations for MJS,
		\begin{align}\label{eq_Lyapunov}
			\Xb_i^{\Kbs} = \Qb_i + \Kb_i^{\star \T}\Rb_i\Kbs_i +  (\Abs_i + \Bbs_i\Kbs_i)^\T\varphis_i(\Xb_{1:s}^{\Kbs})(\Abs_i + \Bbs_i\Kbs_i), \quad \text{for all} \quad i \in [\numSys].
		\end{align}
		
		{\bfseries (d)} The optimal control problem for noiseless MJS is the following infinite time horizon linear quadratic regulator problem
		\begin{align}
			\inf_{\{\ub_0,\ub_1,\dots\}}&\expctn \bigg[\sum_{t=0}^{\infty}\xbb_t^\T\Qb_{\omega(t)}\xbb_t + \ub_t^\T\Rb_{\omega(t)}\ub_t\bigg] \quad \text{s.t.} \quad \xbb_{t+1} = \Abs_{\omega(t)}\xbb_t + \Bbs_{\omega(t)}\ub_t,
		\end{align}
		where the expectation is over the initial state $\xbb_{0} \sim \Nc(0,\Iden_n)$ and the Markovian modes $\{\omega(t)\}_{t=0}^\infty$.
		
		{\bfseries (e)} We define $\Jbar(\Abs_{1:s},\Bbs_{1:s},\Kb_{1:s}) :=\expctn \bigg[\sum_{t=0}^{\infty}\xbb_t^\T\big(\Qb_{\omega(t)} + \Kb_{\omega(t)}^\T\Rb_{\omega(t)}\Kb_{\omega(t)}\big)\xbb_t\bigg] \;\; \text{s.t.} \;\; \xbb_{t+1} = \big(\Abs_{\omega(t)} + \Bbs_{\omega(t)}\Kb_{\omega(t)}\big)\xbb_t$.
	\end{definition*} 
	We are now ready to state a lemma which bounds the suboptimality gap of a noiseless LQR problem in terms of the mismatch between the optimal controller $\Kbs_{1:s}$ and the certainty equivalent controller $\vKhat_{1:s}$.
	\begin{lemma*}[Lemma 3 of \cite{jansch2020policy}]
		Suppose, $\Kbs_{1:s}$ and $\vKhat_{1:s}$ stabilize the noiseless system $\xbb_{t+1} = \Abs_{\omega(t)}\xbb_t + \Bbs_{\omega(t)}\ub_t$ in the mean-square sense. Then, the costs incurred by the optimal controller $\Kbs_{1:s}$ and certainty equivalent controller $\vKhat_{1:s}$ on the true system satisfy  
		\begin{align}
			\Jbar(\Abs_{1:s},\Bbs_{1:s},\vKhat_{1:s})-\Jbar(\Abs_{1:s},\Bbs_{1:s},\Kbs_{1:s}) &= \sum_{i=1}^s\tr\big(\bSi_i^{\vKhat}(\Kbs_i-\vKhat_i)^\T(\Rb_i+\Bbst_i\varphis_i(\Pb_{1:\numSys}^{\Kbs})\Bbs_i)(\Kbs_i - \vKhat_i)\big). \nn 
		\end{align}
	\end{lemma*}
	Recall that, by assumption $\Kbs_{1:s}$ stabilizes the noiseless MJS in the mean-square sense. If we assume, the certainty equivalent controller $\vKhat_{1:s}$also stabilizes the noiseless MJS in the mean-square sense, we can use the above Lemma in \eqref{eqn:sub-opt reduction} to get the following suboptimality bound for the LQR problem~\eqref{eqn:MJS LQR},
	\begin{align}
		\Jhat - J^\star & \leq \sigma_w^2\sum_{i=1}^s\norm{\bSi_i^{\vKhat}}\norm{\Rb_i+\Bbst_i\varphi_i(\Pb_{1:\numSys}^{\Kbs})\Bbs_i}\tf{\Kbs_i-\vKhat_i}^2 \nn \\
		& \leq \sigma_w^2\sum_{i=1}^s
		\min\{n,p\}\norm{\bSi_i^{\vKhat}}(\norm{\Rb_i}+\norm{\Bbs_i}^2(\max_{i\in[s]}\norm{\Pb_i^{\Kbs}}))\norm{\Kbs_i-\vKhat_i}^2, \nn \\
		& \leq 800\sigma_w^2\min\{n,p\}s\norm{\bSi^{\vKhat}}(\norm{\Rb_{1:s}}+\Gamma_\star^3) \Gamma_\star^6 \frac{(\underline{\sigma}(\Rb_{1:s})+\Gamma_\star^3)^2}{\underline{\sigma}(\Rb_{1:s})^4}
		f(\epsilon,\eta)^2. \label{eqn:subopt last}
	\end{align}
	What remains is to show that $\vKhat_{1:\numSys}$ stabilizes the true system $\xbb_{t+1} = \big(\Abs_{\omega(t)} + \Bbs_{\omega(t)}\vKhat_{\omega(t)}\big)\xbb_t$ in the mean square sense. For this purpose, we define $\vLhat_i := \Abs_i+\Bbs_i\vKhat_i = \Lbs_i + \Bbs_i\mtx{\Delta}_{\vKhat_i}$, where $\mtx{\Delta}_{\vKhat_i} = \vKhat_i-\Kbs_i$. Then, we have $\vLhat_i^\T \otimes\vLhat_i^\T = \Lb^{\star \T}_i \otimes \Lb^{\star \T}_i + \Lb^{\star \T}_i \otimes \mtx{\Delta}_{\vKhat_i}^\T\Bb^{\star \T}_i + \mtx{\Delta}_{\vKhat_i}^\T\Bb^{\star \T}_i \otimes \Lb^{\star \T}_i +  \mtx{\Delta}_{\vKhat_i}^\T\Bb^{\star \T}_i \otimes \mtx{\Delta}_{\vKhat_i}^\T\Bb^{\star \T}_i$. Using these, we define the following matrix,
	\begin{equation}
		\hat{\Lb} :=  
		\begin{bmatrix}
			[\vT^\star]_{11} \vLhat_1^\T \otimes \vLhat_1^\T & [\vT^\star]_{12} \vLhat_1^\T \otimes \vLhat_1^\T & \cdots & [\vT^\star]_{1\numSys} \vLhat_1^\T \otimes \vLhat_1^\T \\
			[\vT^\star]_{21} \vLhat_2^\T \otimes \vLhat_2^\T & [\vT^\star]_{22} \vLhat_2^\T \otimes \vLhat_2^\T & \cdots & [\vT^\star]_{2\numSys} \vLhat_2^\T \otimes \vLhat_2^\T \\
			\vdots & \vdots & \vdots & \vdots \\
			[\vT^\star]_{\numSys 1} \vLhat_\numSys^\T \otimes \vLhat_\numSys^\T & [\vT^\star]_{\numSys 2} \vLhat_\numSys^\T \otimes \vLhat_\numSys^\T & \cdots & [\vT^\star]_{\numSys\numSys} \vLhat_\numSys^\T \otimes \vLhat_\numSys^\T \\
		\end{bmatrix}.
	\end{equation}
	To show that $\vKhat_{1:\numSys}$ stabilizes the system $\xbb_{t+1} = \big(\Abs_{\omega(t)} + \Bbs_{\omega(t)}\vKhat_{\omega(t)}\big)\xbb_t$ in the mean square sense it suffices to show that $\rho(\vLhat)<1$ due to Proposition~\ref{prop_riccatiPertb}. For this purpose, we use the Fact~\ref{fact:rho perturbation} and the obervation that, for any block matrix $\Mb$ with blocks $\Mb_{ij}$, $\norm{\Mb}^2 \leq \sum_{i,j}\norm{\Mb_{ij}}^2$,  to obtain
	\begin{align}
		\norm{\vLhat^k} &= \norm{(\tilde{\Lb}^\star + \mtx{\Delta})^k} \leq \tau(\tilde{\Lb}^\star,\gamma)\big(\tau(\tilde{\Lb}^\star,\gamma)\norm{\mtx{\Delta}}+\gamma
		\big)^k, \nn \\
		\text{where} \quad \norm{\mtx{\Delta}} &= \sum_{i=1}^s\big( 2\norm{\Abs_i+ \Bbs_i\Kbs_i}\norm{\Bbs_i}\norm{\Kbs_i - \vKhat_i}+ \norm{\Bbs_i}^2\norm{\Kbs_i - \vKhat_i}^2\big), \nn \\
		&\leq s\big(2\Gamma_\star^3\max_{i\in[s]}\norm{\Kbs_i - \vKhat_i}+ \Gamma_\star^2\max_{i\in[s]}\norm{\Kbs_i - \vKhat_i}^2\big), \nn \\
		&\leqsym{a} 3s\Gamma_\star^3\max_{i\in[s]}\norm{\Kbs_i - \vKhat_i}, \nn \\
		&\leqsym{b} 90s\Gamma_\star^6\frac{(\underline{\sigma}(\Rb_i)+\Gamma_\star^3)}{\underline{\sigma}(\Rb_i)^2}f(\epsilon,\eta), \nn \\
		&\leqsym{c} \frac{1-\gamma}{2\tau(\tilde{\Lb}^\star,\gamma)},
	\end{align}
	where, we get (b) from the derived bound on $\norm{\Kbs_i - \vKhat_i}$, while (a) and (c) from the assumption that $f(\epsilon,\eta) \leq \frac{(1-\gamma)\underline{\sigma}(\Rbs_i)^2}{180s \Gamma_\star^6(\underline{\sigma}(\Rbs_i)+\Gamma_\star^3)\tau(\tilde{\Lb}^\star,\gamma)}.$
	This implies $\norm{\vLhat^k} \leq \tau(\tilde{\Lb}^\star,\gamma)\big(\frac{1+\gamma}{2}\big)^k$, which implies $\rho(\vLhat) \leq (1+\gamma)/2 <1$. To summarize, we showed that when the optimal controller $\Kbs_{1:s}$ stabilizes the noiseless system $\xbb_{t+1} = \Abs_{\omega(t)}\xbb_t + \Bbs_{\omega(t)}\ub_t$ in the mean-square sense and $f(\epsilon,\eta)$ satisfies the above inequality then the certainty equivalent controller $\vKhat_{1:s}$ also stabilizes the true system in the mean-square sense. Lastly, we observe that $\norm{\bSi^{\vKhat}} \leq \sum_{t=0}^{\infty} \norm{\E[\xbb_t\xbb_t^\T]} \leq \sum_{t=0}^{\infty} \E[\tn{\xbb_t}^2] \leq \sum_{t=0}^{\infty} \tau(\tilde{\Lb}^\star,\gamma) \big(\frac{1+\gamma}{2}\big)^t \tn{\xbb_{0}}^2 \leq \frac{2\tau(\tilde{\Lb}^\star,\gamma)}{1-\gamma}$ for some $\tau(\tilde{\Lb}^\star,\gamma) \geq 1$ and $\gamma < 1$. Combining this with \eqref{eqn:subopt last}, we get the advertised suboptimality bound. This completes the proof.
\end{proof}

\subsection{Proof of Lemma \ref{lemma_3}} \label{subsec_lcss_1}

Since $\Kcal$ is defined using $\Tcal^\inv$, given in \eqref{eq:Tinv_def}, in order to bound $\Kcal$, we start by bounding $\norm{\Tcal^\inv}$.   
Let us first bound one of the factors in $\Tcal^\inv$:
\begin{equation}
	\norm{\parentheses{\Ib - \vLtil^\star}^\inv} = \normbig{\sum_{k=0}^{\infty} (\vLtil^\star)^k} \leq \sum_{k=0}^{\infty} \norm{(\vLtil^\star)^k}
	\overset{\text{(i)}}{\leq} \sum_{k=0}^{\infty} \tau(\vLtil^\star, \gamma) \gamma^k \overset{\text{(ii)}}{\leq} \frac{\tau(\vLtil^\star, \gamma)}{1- \gamma} \label{eq_7}
\end{equation}
where (i) follows from Definition \ref{def_tau}, and (ii) holds since $\rho(\vLtil^\star) \leq \gamma < 1$. Therefore, using Fact~\ref{fact:tilde_vek}, we have
\begin{equation} \label{lemma_2}
	\norm{\Tcal^\inv} \leq \norm{\tilde{\vek}^\inv} \norm{\parentheses{\Ib - \vLtil^\star}^\inv} \norm{\tilde{\vek}} \leq \sqrt{\dimSt \numSys} \frac{\tau(\vLtil^\star, \gamma)}{1 - \gamma}.
\end{equation} 


Next, to simplify the notations, for $\vX, \vX_1, \vX_2 \in \Scal_\nu$, we let $\vP^\star_\vX := \vP^\star + \vX$, $\vP^\star_{\vX_1}: = \vP^\star + \vX_1$, $\vP^\star_{\vX_2}: = \vP^\star + \vX_2$, $\vDelta_\vA := \vAhat - \vA^\star$, $\vDelta_\vB := \vBhat - \vB^\star$, and $\vDelta_\vS := \vShat - \vS^\star$. We now derive some basic relations that will be used frequently later.

Recall by definition $\vX = \diag (\vX_{1:\numSys})$ and $\vPhi^\star(\vX) = \diag(\varphi^\star_{1:\numSys}(\vX_{1:\numSys}))$, so
\\
\begin{equation}\label{eq_20}
	\norm{\vPhi^\star(\vX)} 
	=
	\max_{i \in [\numSys]} \norm{\varphi^\star_i (\vX_{1:\numSys})}
	=
	\max_{i \in [\numSys]} \norm{\sum_{j \in [\numSys]} [\vT^\star]_{ij} \vX_j}
	\leq
	\norm{\vX}.
\end{equation}
Similarly, we have $\norm{\hat{\vPhi}(\vX)} \leq \norm{\vX}$. Furthermore,
\begin{equation} \label{eq_27}
	\begin{split}
		\norm{\vPhi^\star(\vX) - \hat{\vPhi}(\vX)}
		&= \max_{i \in [\numSys]} \norm{\varphi^\star_i(\vX_{1:\numSys}) - \hat{\varphi}_i(\vX_{1:\numSys})} \\
		&= \max_{i \in [\numSys]} \norm{\sum_{j \in [\numSys]} ([\vT^\star]_{ij} - [\vThat]_{ij}) \vX_j} \\
		&\leq \eta \norm{\vX}.
	\end{split}	
\end{equation} 
For any $\vX \in \Scal_\nu$, we have
\begin{equation}\label{eq_21}
	\norm{\vP^\star_\vX} \leq \norm{\vP^\star} + \nu \leq \norm{\vP^\star}_+,
\end{equation}
where we used assumption $\nu \leq 1$ in the statement of Lemma \ref{lemma_3}. Combining \eqref{eq_20} and \eqref{eq_21}, we have
\begin{equation}\label{eq_22}
	\max \curlybracketsbig{\norm{\vPhi^\star(\vP^\star_\vX)}, \norm{\hat{\vPhi}(\vP^\star_\vX)}} \leq \norm{\vP^\star}_+.
\end{equation}
Consider $\vDelta_\vS$, since $\vDelta_\vS = \vShat - \vS^\star = \vBhat \vR^\inv \vBhat^\T - \vB^\star \vR^\inv {\vB^\star}^\T = \vDelta_\vB \vR^\inv {\vB^\star}^\T + \vB^\star \vR^\inv \vDelta_\vB^\T + \vDelta_\vB \vR^\inv \vDelta_\vB^\T$ and assumption $ \norm{\vDelta_\vB} \leq 
\epsilon \leq \norm{\vB^\star}$ in the statement of Lemma \ref{lemma_3}, we have
\begin{equation}\label{eq_23}
	\norm{\vS^\star} \leq \norm{\vB^\star}^2 \norm{\vR^\inv}, \quad 
	\norm{\vDelta_\vS} \leq 3 \norm{\vB^\star} \norm{\vR^\inv} \epsilon, \quad
	\norm{\vShat} \leq 4 \norm{\vB^\star}^2 \norm{\vR^\inv}.
\end{equation}
Following Lemma \ref{lemma_4}, \eqref{eq_22}, and \eqref{eq_23}, we have
\begin{align}
	\max \curlybracketsbig{\norm{(\vI + \vS^\star \vPhi^\star(\vP^\star_\vX))^\inv}, \norm{(\vI + \vS^\star \hat{\vPhi}(\vP^\star_\vX))^\inv}} \leq 1+ \norm{\vS^\star} \norm{\vP^\star}_+ \leq \norm{\vB^\star}_+^2 \norm{\vR^\inv}_+ \norm{\vP^\star}_+, \label{eq_24} \\
	\max \curlybracketsbig{\norm{(\vI + \vShat \vPhi^\star(\vP^\star_\vX))^\inv}, \norm{(\vI + \vShat \hat{\vPhi}(\vP^\star_\vX))^\inv}} \leq 1+ \norm{\vShat} \norm{\vP^\star}_+ \leq 4 \norm{\vB^\star}_+^2 \norm{\vR^\inv}_+ \norm{\vP^\star}_+. \label{eq_25}
\end{align}

Finally, recall the definition of $F(\vX; \vA^\star, \vB^\star, \vT^\star)$ in \eqref{eq_1}, and consider the following notation:
\begin{align}
	\Gcal_1(\vX): = F(\vP^\star_\vX; \vA^\star, \vB^\star, \vThat) - F(\vP^\star_\vX; \vAhat, \vBhat, \vThat), \\
	\Gcal_2(\vX): = F(\vP^\star_\vX; \vA^\star, \vB^\star, \vT^\star) - F(\vP^\star_\vX; \vA^\star, \vB^\star, \vThat).
\end{align}

Now we are ready to start the main proof for Lemma \ref{lemma_3}. We will do this in two steps: (a) Proof of \eqref{eq_12}. (b) Proof of \eqref{eq_13}.

\noindent \textbf{Step (a):} Proof of the bound \eqref{eq_12} in Lemma \ref{lemma_3}.

We can see from the definition of $\Kcal(\vX)$ in \eqref{eq:defKcal}:
\begin{align}
	\Kcal(\vX) &= \Tcal^\inv(\Gcal_1(\vX) + \Gcal_2(\vX) - \Hcal(\vX)). \label{eq_26}
\end{align}
We will upper bound $\norm{\Hcal(\vX)}$, $\norm{\Gcal_1(\vX)}$, and $\norm{\Gcal_2(\vX)}$ for any $\vX \in \Scal_\nu$, then combining these with the bound for $\norm{\Tcal^\inv}$ in \eqref{lemma_2}, we can conclude step (a).

Since $\Hcal(\vX) = {\vL^\star}^\T \vPhi^\star(\vX) (\vI + \vS^\star \vPhi^\star(\vP^\star) + \vS^\star \vPhi^\star(\vX))^\inv \vS^\star \vPhi^\star(\vX) \vL^\star$ in \eqref{lemma_1}, we have
\begin{equation}\label{eq_28}
	\norm{\Hcal(\vX)} \leq \norm{\vL^\star}^2 \norm{\vS^\star} \norm{\vX}^2 \leq \norm{\vL^\star}^2 \norm{\vS^\star} \nu^2,
\end{equation}
where \eqref{eq_17} and \eqref{eq_20} are used. Now consider $\Gcal_1(\vX)$
\begin{equation}
	\begin{split}
		\Gcal_1(\vX)
		&= F(\vP^\star_\vX; \vA^\star, \vB^\star, \vThat) - F(\vP^\star_\vX; \vAhat, \vBhat, \vThat) \\
		&\overset{\text{(i)}}{=} -{\vA^\star}^\T \hat{\vPhi}(\vP^\star_\vX)(\vI + \vS^\star \hat{\vPhi}(\vP^\star_\vX))^\inv \vA^\star + (\vA^\star + \vDelta_\vA)^\T \hat{\vPhi}(\vP^\star_\vX)(\vI + \vShat \hat{\vPhi}(\vP^\star_\vX))^\inv (\vA^\star + \vDelta_\vA) \\
		&\overset{\text{(ii)}}{=} -{\vA^\star}^\T \hat{\vPhi}(\vP^\star_\vX)(\vI + \vS^\star \hat{\vPhi}(\vP^\star_\vX))^\inv \vDelta_\vS \hat{\vPhi}(\vP^\star_\vX) (\vI + \vShat \hat{\vPhi}(\vP^\star_\vX))^\inv \vA^\star \\
		& \ \ \ \ \ + \vDelta_\vA^\T \hat{\vPhi}(\vP^\star_\vX) (\vI + \vShat \hat{\vPhi}(\vP^\star_\vX))^\inv \vA^\star + {\vA^\star}^\T \hat{\vPhi}(\vP^\star_\vX) (\vI + \vShat \hat{\vPhi}(\vP^\star_\vX))^\inv \vDelta_\vA \\
		& \ \ \ \ \ + \vDelta_\vA^\T \hat{\vPhi}(\vP^\star_\vX) (\vI + \vShat \hat{\vPhi}(\vP^\star_\vX))^\inv \vDelta_\vA,\\
	\end{split}
\end{equation}
where (i) follows from the definition of $F$ in \eqref{eq_1}, and (ii) uses \eqref{eq_19}. Then,
\begin{equation}\label{eq_29}
	\begin{split}
		\norm{\Gcal_1(\vX)} 
		&\leq \norm{\vA^\star}^2 \norm{\hat{\vPhi}(\vP^\star_\vX)}^2 \norm{\vDelta_\vS} + 2\norm{\hat{\vPhi}(\vP^\star_\vX)} \norm{\vA^\star} \epsilon + \norm{\hat{\vPhi}(\vP^\star_\vX)} \epsilon^2\\
		&\leq 3 \norm{\vA^\star}^2 \norm{\vP^\star}_+^2 \norm{\vB^\star} \norm{\vR^\inv} \epsilon + 2 \norm{\vP^\star}_+ \norm{\vA^\star} \epsilon + \norm{\vB^\star} \norm{\vP^\star}_+ \epsilon\\
		&\leq 3 \norm{\vA^\star}_+^2 \norm{\vB^\star}_+ \norm{\vP^\star}_+^2 \norm{\vR^\inv}_+ \epsilon,
	\end{split}
\end{equation}
where \eqref{eq_17}, \eqref{eq_22}, \eqref{eq_23}, and assumption $\epsilon \leq \norm{\vB^\star}$ in the statement of Lemma \ref{lemma_3} are used. Now consider $\Gcal_2(\vX)$.
\begin{equation}
	\begin{split}
		\Gcal_2(\vX) 
		&= F(\vP^\star_\vX; \vA^\star, \vB^\star, \vT^\star) - F(\vP^\star_\vX; \vA^\star, \vB^\star, \vThat) \\
		&= - {\vA^\star}^\T \vPhi^\star(\vP^\star_\vX) (\vI + \vS^\star \vPhi^\star(\vP^\star_\vX))^\inv \vA^\star + {\vA^\star}^\T \hat{\vPhi}(\vP^\star_\vX) (\vI + \vS^\star \hat{\vPhi}(\vP^\star_\vX))^\inv \vA^\star \\
		&= - {\vA^\star}^\T \vPhi^\star(\vP^\star_\vX) \squarebracketsbig{(\vI + \vS^\star \vPhi^\star(\vP^\star_\vX))^\inv -  (\vI + \vS^\star \hat{\vPhi}(\vP^\star_\vX))^\inv} \vA^\star+ {\vA^\star}^\T (\hat{\vPhi}(\vP^\star_\vX) - \vPhi^\star(\vP^\star_\vX)) (\vI + \vS^\star \hat{\vPhi}(\vP^\star_\vX))^\inv \vA^\star \\
		&= - {\vA^\star}^\T \vPhi^\star(\vP^\star_\vX) (\vI + \vS^\star \vPhi^\star(\vP^\star_\vX))^\inv \vS^\star (\hat{\vPhi}(\vP^\star_\vX) - \vPhi^\star(\vP^\star_\vX)) (\vI + \vS^\star \hat{\vPhi}(\vP^\star_\vX))^\inv \vA^\star \\ 
		& \ \ \ \ \ + {\vA^\star}^\T (\hat{\vPhi}(\vP^\star_\vX) - \vPhi^\star(\vP^\star_\vX)) (\vI + \vS^\star \hat{\vPhi}(\vP^\star_\vX))^\inv \vA^\star \\
		&= - {\vA^\star}^\T \squarebracketsbig{\vPhi^\star(\vP^\star_\vX) (\vI + \vS^\star \vPhi^\star(\vP^\star_\vX))^\inv \vS^\star - \vI} (\hat{\vPhi}(\vP^\star_\vX) - \vPhi^\star(\vP^\star_\vX)) (\vI + \vS^\star \hat{\vPhi}(\vP^\star_\vX))^\inv \vA^\star,
	\end{split}
\end{equation}
where \eqref{eq_19} is used. With some algebra, we can bound the norm of $\Gcal_2(\vX)$:
\begin{equation}\label{eq_30}
	\begin{split}
		\norm{\Gcal_2(\vX)}
		&\leq \norm{\vA^\star}^2 (\norm{\vPhi^\star(\vP^\star_\vX)} \norm{\vS^\star} + 1) \norm{\hat{\vPhi}(\vP^\star_\vX) - \vPhi^\star(\vP^\star_\vX)} \norm{(\vI + \vS^\star \hat{\vPhi}(\vP^\star_\vX))^\inv}\\
		&\leq \norm{\vA^\star}^2 (\norm{\vP^\star}_+ \norm{\vB^\star}^2 \norm{\vR^\inv} + 1) \eta \norm{\vP^\star}_+ \norm{\vB^\star}_+^2 \norm{\vR^\inv}_+ \norm{\vP^\star}_+\\\
		&\leq \norm{\vA^\star}_+^2 \norm{\vB^\star}_+^4 \norm{\vP^\star}_+^3 \norm{\vR^\inv}_+^2 \eta,
	\end{split}
\end{equation}
where \eqref{eq_17}, \eqref{eq_22}, \eqref{eq_23}, \eqref{eq_27}, \eqref{eq_21}, and \eqref{eq_24} are used.

Using upper bounds for $\norm{\Tcal^\inv}$, $\norm{\Hcal(\vX)}$, $\norm{\Gcal_1(\vX)}$, and $\norm{\Gcal_2(\vX)}$ in \eqref{lemma_2}, \eqref{eq_28}, \eqref{eq_29}, and \eqref{eq_30}, then the relation in \eqref{eq_26} gives
\begin{equation}
	\norm{\Kcal(\vX)} \leq \frac{\sqrt{\dimSt \numSys} \tau (\vLtil^\star, \gamma)}{1-\gamma} \parenthesesbig{ \norm{\vL^\star}^2 \norm{\vS^\star} \nu^2 + 3 \norm{\vA^\star}_+^2 \norm{\vB^\star}_+ \norm{\vP^\star}_+^2 \norm{\vR^\inv}_+ \epsilon + \norm{\vA^\star}_+^2 \norm{\vB^\star}_+^4 \norm{\vP^\star}_+^3 \norm{\vR^\inv}_+^2 \eta },
\end{equation}
which shows \eqref{eq_12} in Lemma \ref{lemma_3}.

\noindent \textbf{Step (b):} Proof of the bound \eqref{eq_13} in Lemma \ref{lemma_3}.
Following as in step (a) and invoking the linearity of $\Tcal^\inv$, we have
\begin{align}
	\Kcal(\vX_1) - \Kcal(\vX_2) &= \Tcal^\inv(\Gcal_1(\vX_1) - \Gcal_1(\vX_2) + \Gcal_2(\vX_2) - \Gcal_2(\vX_2) - \Hcal(\vX_1) + \Hcal(\vX_2)) \label{eq_31}.
\end{align} 
We will upper bound $\norm{\Hcal(\vX_1) - \Hcal(\vX_2)}$, $\norm{\Gcal_1(\vX_1) - \Gcal_1(\vX_2)}$, and $\norm{\Gcal_2(\vX_1) - \Gcal_2(\vX_2)}$ for any $\vX_1, \vX_2 \in \Scal_\nu$, then combining these with the bound for $\norm{\Tcal^\inv}$ in \eqref{lemma_2}, we can prove step (b).

First we consider $\Hcal(\vX_1) - \Hcal(\vX_2)$.
\begin{equation}
	\begin{split}
		&\Hcal(\vX_1) - \Hcal(\vX_2)\\
		=& {\vL^\star}^\T \vPhi^\star(\vX_1) (\vI + \vS^\star \vPhi^\star(\vP^\star_{\vX_1}))^\inv \vS^\star \vPhi^\star(\vX_1) \vL^\star - {\vL^\star}^\T \vPhi^\star(\vX_2) (\vI + \vS^\star \vPhi^\star(\vP^\star_{\vX_2}))^\inv \vS^\star \vPhi^\star(\vX_2) \vL^\star \\
		=& {\vL^\star}^\T \vPhi^\star(\vX_1) \squarebracketsbig{(\vI + \vS^\star \vPhi^\star(\vP^\star_{\vX_1}))^\inv - (\vI + \vS^\star \vPhi^\star(\vP^\star_{\vX_2}))^\inv} \vS^\star \vPhi^\star(\vX_1) \vL^\star \\
		& \ \ \ \ \ - {\vL^\star}^\T \vPhi^\star(\vX_2 - \vX_1) (\vI + \vS^\star \vPhi^\star(\vP^\star_{\vX_2}))^\inv \vS^\star \vPhi^\star(\vX_2) \vL^\star - {\vL^\star}^\T \vPhi^\star(\vX_1) (\vI + \vS^\star \vPhi^\star(\vP^\star_{\vX_2}))^\inv \vS^\star \vPhi^\star(\vX_2 - \vX_1) \vL^\star \\
		\overset{\eqref{eq_19}}{=}& {\vL^\star}^\T \vPhi^\star(\vX_1) (\vI + \vS^\star \vPhi^\star(\vP^\star_{\vX_1}))^\inv \vS^\star \vPhi^\star(\vX_2 - \vX_1) (\vI + \vS^\star \vPhi^\star(\vP^\star_{\vX_2}))^\inv \vS^\star \vPhi^\star(\vX_1) \vL^\star \\
		& \ \ \ \ \ - {\vL^\star}^\T \vPhi^\star(\vX_2 - \vX_1) (\vI + \vS^\star \vPhi^\star(\vP^\star_{\vX_2}))^\inv \vS^\star \vPhi^\star(\vX_2) \vL^\star - {\vL^\star}^\T \vPhi^\star(\vX_1) (\vI + \vS^\star \vPhi^\star(\vP^\star_{\vX_2}))^\inv \vS^\star \vPhi^\star(\vX_2 - \vX_1) \vL^\star.
	\end{split}
\end{equation}
Then,
\begin{equation}\label{eq_32}
	\begin{split}
		&\norm{\Hcal(\vX_1) - \Hcal(\vX_2)} \\
		\leq & \norm{\vL^\star}^2 \norm{\vX_1} \norm{\vS^\star} \norm{\vX_2 - \vX_1} \norm{\vS^\star} \norm{\vX_1} + \norm{\vL^\star}^2 \norm{\vX_2 - \vX_1} \norm{\vS^\star} \norm{\vX_2} + \norm{\vL^\star}^2 \norm{\vX_1} \norm{\vS^\star} \norm{\vX_2-\vX_1} \\
		\leq& \norm{\vL^\star}^2 \norm{\vS^\star}^2 \nu^2 \norm{\vX_2 - \vX_1} + 2 \norm{\vL^\star}^2 \norm{\vS^\star} \nu \norm{\vX_2 - \vX_1} \\
		\leq& 3 \norm{\vL^\star}^2 \norm{\vS^\star} \nu \norm{\vX_2 - \vX_1}.
	\end{split}
\end{equation}
The first inequality follows from $\norm{\vN (\vI + \vM \vN)^\inv} \leq \norm{\vN}$ in \eqref{eq_17}, and $\norm{\vPhi^\star(\vX)} \leq \norm{\vX}$ in \eqref{eq_20}. The last inequality can be obtained by recalling the assumption $\nu \leq \norm{\vS^\star}^\inv$ in the statement of Lemma \ref{lemma_3}. 

For $\Gcal_1(\vX_1) - \Gcal_1(\vX_2)$, we have
\begin{equation}\label{eq_35}
	\begin{split}
		\Gcal_1(\vX_1) &- \Gcal_1(\vX_2) \\
		=& F(\vP^\star_{\vX_1}; \vA^\star, \vB^\star, \vThat) - F(\vP^\star_{\vX_1}; \vAhat, \vBhat, \vThat) - F(\vP^\star_{\vX_2}; \vA^\star, \vB^\star, \vThat) + F(\vP^\star_{\vX_2}; \vAhat, \vBhat, \vThat)\\
		=& - {\vA^\star}^\T \hat{\vPhi}(\vP^\star_{\vX_1}) (\vI + \vS^\star \hat{\vPhi}(\vP^\star_{\vX_1}))^\inv \vA^\star + (\vA^\star + \vDelta_\vA)^\T \hat{\vPhi}(\vP^\star_{\vX_1}) (\vI + \vShat \hat{\vPhi}(\vP^\star_{\vX_1}))^\inv (\vA^\star + \vDelta_\vA) \\
		\quad & + {\vA^\star}^\T \hat{\vPhi}(\vP^\star_{\vX_2}) (\vI + \vS^\star \hat{\vPhi}(\vP^\star_{\vX_2}))^\inv \vA^\star - (\vA^\star + \vDelta_\vA)^\T \hat{\vPhi}(\vP^\star_{\vX_2}) (\vI + \vShat \hat{\vPhi}(\vP^\star_{\vX_2}))^\inv (\vA^\star + \vDelta_\vA) \\
		\overset{\eqref{eq_19}}{=}& - {\vA^\star}^\T \underbrace{\hat{\vPhi}(\vP^\star_{\vX_1}) (\vI + \vS^\star \hat{\vPhi}(\vP^\star_{\vX_1}))^\inv}_{:=\vM_1} \vDelta_\vS \underbrace{\hat{\vPhi}(\vP^\star_{\vX_1}) (\vI + \vShat \hat{\vPhi}(\vP^\star_{\vX_1}))^\inv}_{:=\vMhat_1} \vA^\star \\
		\quad& + {\vA^\star}^\T \underbrace{\hat{\vPhi}(\vP^\star_{\vX_2}) (\vI + \vS^\star \hat{\vPhi}(\vP^\star_{\vX_2}))^\inv}_{:=\vM_2} \vDelta_\vS \underbrace{\hat{\vPhi}(\vP^\star_{\vX_2}) (\vI + \vShat \hat{\vPhi}(\vP^\star_{\vX_2}))^\inv}_{:=\vMhat_2} \vA^\star \\
		\quad& + \vDelta_\vA^\T \hat{\vPhi}(\vP^\star_{\vX_1}) (\vI + \vShat \hat{\vPhi}(\vP^\star_{\vX_1}))^\inv \vAhat - \vDelta_\vA^\T \hat{\vPhi}(\vP^\star_{\vX_2}) (\vI + \vShat \hat{\vPhi}(\vP^\star_{\vX_2}))^\inv \vAhat\\
		\quad& + {\vA^\star}^\T \hat{\vPhi}(\vP^\star_{\vX_1}) (\vI + \vShat \hat{\vPhi}(\vP^\star_{\vX_1}))^\inv \vDelta_\vA - {\vA^\star}^\T \hat{\vPhi}(\vP^\star_{\vX_2}) (\vI + \vShat \hat{\vPhi}(\vP^\star_{\vX_2}))^\inv \vDelta_\vA\\
		=& -{\vA^\star}^\T ((\vM_1 - \vM_2) \vDelta_\vS \vMhat_1 + \vM_2 \vDelta_\vS (\vMhat_1-\vMhat_2) ) \vA^\star + \vDelta_\vA^\T (\vMhat_1 - \vMhat_2) \vAhat + {\vA^\star}^\T (\vMhat_1 - \vMhat_2) \vDelta_\vA.
	\end{split}
\end{equation}
For $\vM_1 - \vM_2$, we have
\begin{equation}
	\begin{split}
		\vM_1 - \vM_2
		&= \hat{\vPhi}(\vP^\star_{\vX_1}) \squarebracketsbig{(\vI + \vS^\star \hat{\vPhi}(\vP^\star_{\vX_1}))^\inv - (\vI + \vS^\star \hat{\vPhi}(\vP^\star_{\vX_2}))^\inv} 
		+ \hat{\vPhi}(\vX_1 - \vX_2) (\vI + \vS^\star \hat{\vPhi}(\vP^\star_{\vX_2}))^\inv \\
		&\overset{\eqref{eq_19}}{=} \hat{\vPhi}(\vP^\star_{\vX_1}) (\vI + \vS^\star \hat{\vPhi}(\vP^\star_{\vX_1}))^\inv \vS^\star \hat{\vPhi}(\vX_2 - \vX_1) (\vI + \vS^\star \hat{\vPhi}(\vP^\star_{\vX_2}))^\inv
		+ \hat{\vPhi}(\vX_1 - \vX_2) (\vI + \vS^\star \hat{\vPhi}(\vP^\star_{\vX_2}))^\inv \\
		&=\squarebracketsbig{\hat{\vPhi}(\vP^\star_{\vX_1}) (\vI + \vS^\star \hat{\vPhi}(\vP^\star_{\vX_1}))^\inv \vS^\star - \vI} \hat{\vPhi}(\vX_2 - \vX_1) (\vI + \vS^\star \hat{\vPhi}(\vP^\star_{\vX_2}))^\inv.
	\end{split}
\end{equation}
Then,
\begin{equation}\label{eq_36}
	\begin{split}
		\norm{\vM_1 - \vM_2}
		\leq& (\norm{\vP^\star}_+ \norm{\vB^\star}^2 \norm{\vR^\inv} + 1) \norm{\vX_2 - \vX_1} \norm{\vB^\star}_+^2 \norm{\vR^\inv}_+ \norm{\vP^\star}_+ \\
		\leq& \norm{\vB^\star}_+^4 \norm{\vP^\star}_+^2 \norm{\vR^\inv}_+^2 \norm{\vX_2 - \vX_1}.
	\end{split}
\end{equation}
The first line is obtained by invoking: (i) $\norm{\vN(\vI + \vM \vN)^\inv} \leq \norm{\vN}$ in \eqref{eq_17}, (ii) $\norm{\hat{\vPhi}(\vX)} \leq \norm{\vX}$ in \eqref{eq_20}, (iii) $\norm{\hat{\vPhi}(\vP_\vX^\star)} \leq \norm{\vP^\star}_+$ in \eqref{eq_22}, (iv) $\norm{\vS^\star} \leq \norm{\vB^\star}^2 \norm{\vR^\inv}$ in \eqref{eq_23}, and (v) $\norm{(\vI + \vS^\star \hat{\vPhi}(\vP_\vX^\star))^\inv} \leq \norm{\vB^\star}_+^2 \norm{\vR^\inv}_+ \norm{\vP^\star}_+.$ in \eqref{eq_24}. Now using  \eqref{eq_17}, \eqref{eq_22}, \eqref{eq_23}, \eqref{eq_20}, and \eqref{eq_25} similarly, we have
\begin{equation}\label{eq_37}
	\begin{split}
		\norm{\vMhat_1 - \vMhat_2}
		\leq& \normbig{\squarebracketsbig{\hat{\vPhi}(\vP^\star_{\vX_1}) (\vI + \vShat \hat{\vPhi}(\vP^\star_{\vX_1}))^\inv \vShat - \vI} \hat{\vPhi}(\vX_2 - \vX_1) (\vI + \vShat \hat{\vPhi}(\vP^\star_{\vX_2}))^\inv} \\
		\leq& (\norm{\vP^\star}_+ 4 \norm{\vB^\star}^2 \norm{\vR^\inv} + 1) \cdot \norm{\vX_2 - \vX_1} \cdot 4 \norm{\vB^\star}_+^2 \norm{\vR^\inv}_+ \norm{\vP^\star}_+ \\
		\leq& 16 \norm{\vB^\star}_+^4 \norm{\vP^\star}_+^2 \norm{\vR^\inv}_+^2 \norm{\vX_2 - \vX_1}.
	\end{split}
\end{equation}
Through \eqref{eq_17} and \eqref{eq_22}, we can have
\begin{equation}\label{eq_38}
	\norm{\vMhat_1} \leq \norm{\vP^\star}_+, \quad \norm{\vM_2} \leq \norm{\vP^\star}_+.
\end{equation}
Plugging \eqref{eq_36}, \eqref{eq_37}, and \eqref{eq_38} into \eqref{eq_35} gives
\begin{equation}\label{eq_33}
	\begin{split}
		\norm{\Gcal_1(\vX_1) - \Gcal_1(\vX_2)}
		\leq& 51 \norm{\vA^\star}^2 \norm{\vB^\star}_+^4 \norm{\vB^\star} \norm{\vP^\star}_+^3 \norm{\vR^\inv}_+^2 \norm{\vR^\inv} \norm{\vX_2-\vX_1} \epsilon \\
		\quad& +16 (\norm{\vA^\star} + \epsilon) \norm{\vB^\star}_+^4 \norm{\vP^\star}_+^2 \norm{\vR^\inv}_+^2  \norm{\vX_2-\vX_1} \epsilon \\
		\quad& +16 \norm{\vA^\star} \norm{\vB^\star}_+^4 \norm{\vP^\star}_+^2 \norm{\vR^\inv}_+^2  \norm{\vX_2-\vX_1} \epsilon \\
		\leq & 51 \norm{\vA^\star}_+^2 \norm{\vB^\star}_+^5 \norm{\vP^\star}_+^3 \norm{\vR^\inv}_+^3 \norm{\vX_2-\vX_1} \epsilon,
	\end{split}
\end{equation}
where we additionally use the fact $\norm{\vDelta_\vS} \leq 3 \norm{\vB^\star} \norm{\vR^\inv} \epsilon$ in \eqref{eq_23} and assumption $\epsilon\leq 1$ in the statement of Lemma \ref{lemma_3}. 

For $\Gcal_2(\vX_1) - \Gcal_2(\vX_2)$, following the same strategy, we can obtain the following bound:
\begin{equation}\label{eq_34}
	\norm{\Gcal_2(\vX_1) - \Gcal_2(\vX_2)}
	\leq 2 \norm{\vA^\star}_+^2 \norm{\vB^\star}_+^6 \norm{\vP^\star}_+^3 \norm{\vR^\inv}_+^3 \norm{\vX_2 - \vX_1} \eta.
\end{equation}


Using upper bounds for $\norm{\Tcal^\inv}$, $\norm{\Hcal(\vX_1) - \Hcal(\vX_2)}$, $\norm{\Gcal_1(\vX_1) - \Gcal_1(\vX_2)}$, and $\norm{\Gcal_2(\vX_1) - \Gcal_2(\vX_2)}$ in \eqref{lemma_2}, \eqref{eq_32}, \eqref{eq_33}, and \eqref{eq_34}, then relation in \eqref{eq_31} gives
\begin{multline}
	\norm{\Kcal(\vX_1) - \Kcal(\vX_2)} \leq \\
	\frac{\sqrt{ns} \tau (\vLtil^\star, \gamma)}{1-\gamma} \parenthesesbig{ 3 \norm{\vL^\star}^2 \norm{\vS^\star} \nu + 51 \norm{\vA^\star}_+^2 \norm{\vB^\star}_+^5 \norm{\vP^\star}_+^3 \norm{\vR^\inv}^3_+ \epsilon + 2 \norm{\vA^\star}_+^2 \norm{\vB^\star}_+^6 \norm{\vP^\star}_+^3 \norm{\vR^\inv}^3_+ \eta} \norm{\vX_1 - \vX_2}.
\end{multline}
which shows \eqref{eq_13} in Lemma \ref{lemma_3} and concludes the proof.

\onecolumn
\end{document}